\documentclass{article}

\hsize=6in
\vsize=9in

\oddsidemargin  0pt     %   Left margin on odd-numbered pages.
\evensidemargin 0pt     %   Left margin on even-numbered pages.
\marginparwidth 40pt    %   Width of marginal notes.
\marginparsep 10pt      %   Horizontal space between outer margin and
                        %   marginal note

% VERTICAL SPACING:
\topmargin 0pt           % Nominal distance from top of page to top of
                         %    box containing running head.
\headsep 10pt            %    Space between running head and text.

% DIMENSION OF TEXT:

\textheight 8.4in        % Height of text(including footnotes and figures,
                         %    excluding running head and foot).
\textwidth 6.5in         % Width of text line.

\topmargin 0pt
\headsep 10pt

\usepackage{graphicx}
\usepackage{amsfonts}
\usepackage{amsmath}
\usepackage{amssymb}
\usepackage{enumitem}
\usepackage{mathrsfs}
\usepackage{amsthm}
\usepackage{multirow}

%\begin{enumerate}[label={\rm (\roman{*}) \ }, ref={\rm (\arabic{*})}]

\newtheorem*{theorem*}{Theorem}

\pagestyle{myheadings}
\thispagestyle{empty}

\newtheorem{theorem}{Theorem}[section]
\newtheorem{lemma}[theorem]{Lemma}

\newtheorem{observation}[theorem]{Observation}
\newtheorem{definition}[theorem]{Definition}

\newtheorem{conjecture}[theorem]{Conjecture}

\def\Z{{\mathbb Z}}
\DeclareMathOperator{\supp}{supp}

\begin{document}

\title{Many flows in the group connectivity setting} 

\author{
  Matt DeVos\thanks{Department of Mathematics, Simon Fraser University, Burnaby, B.C. V5A 1S6.  E-mail: mdevos@sfu.ca. Supported by an NSERC Discovery Grant (Canada)}\\[1mm]
   %~\thanks{Email: {\tt mdevos@sfu.ca}}
\and
  Rikke Langhede\thanks{Department of Applied Mathematics and Computer Science, Technical University of Denmark, DK-2800 Lyngby, Denmark. E-mail: rimla@dtu.dk.}
\and
  Bojan Mohar\thanks{Department of Mathematics, Simon Fraser University, Burnaby, B.C. V5A 1S6. E-mail: mohar@sfu.ca. Supported in part by the NSERC Discovery Grant R611450 (Canada) and by the Research Program P1-297 of ARRS (Slovenia).}
\and
  Robert \v{S}\'amal\thanks{Computer Science Institute (CSI) of Charles University, Malostransk\'e n\'am\v{e}st\'i 25, 118 00 Prague, Czech Republic. E-mail: samal@iuuk.mff.cuni.cz.
  Partially supported by grant 19-21082S of the Czech Science Foundation. This project has received funding from the European Union’s Horizon 2020 research and innovation programme under the Marie Sk{\l}odowska-Curie grant agreement No 823748.}
}

\maketitle

\begin{abstract}
  Two well-known results in the world of nowhere-zero flows are Jaeger's 4-flow theorem asserting that
  every 4-edge-connected graph has a nowhere-zero $\Z_2 \times \Z_2$-flow and
  Seymour's 6-flow theorem asserting that every 2-edge-connected graph has a nowhere-zero $\Z_6$-flow.
  Dvo\v{r}\'ak and the last two authors of this paper extended these results by
  proving the existence of exponentially many nowhere-zero flows under the same assumptions.
  We revisit this setting and provide extensions and simpler proofs of these results.

  The concept of a nowhere-zero flow was extended in a significant paper of Jaeger, Linial, Payan, and Tarsi to a
  choosability-type setting.  For a fixed abelian group $\Gamma$, an oriented graph $G = (V,E)$ is
  called $\Gamma$-\emph{connected} if for every function $f : E \rightarrow \Gamma$ there is a flow
  $\phi : E \rightarrow \Gamma$ with $\phi(e) \neq f(e)$ for every $e \in E$ (note that taking $f = 0$
  forces $\phi$ to be nowhere-zero).  Jaeger et al.\ proved that every oriented 3-edge-connected graph
  is $\Gamma$-connected whenever $|\Gamma| \ge 6$.  We prove that there are 
  exponentially many solutions whenever $|\Gamma| \ge 8$.  For the group $\Z_6$ we prove that
  for every oriented 3-edge-connected  $G = (V,E)$ with $\ell = |E| - |V| \ge 11$ and every
  $f: E \rightarrow \Z_6$, there are at least $2^{ \sqrt{\ell} / \log \ell}$ flows $\phi$ with
  $\phi(e) \neq f(e)$ for every $e \in E$.
\end{abstract}

\paragraph{Keywords:}{nowhere-zero flow; group connectivity; counting}
\paragraph{MSC:}{05C21; 05C30}

%%%%%%%%%%%%%%%%%%%%%%%%%%%%%%%%%%%%%%%%%%%%%%%%%%%%%%%%%%%%%%%%%%%%%%%%%%%%%%%%%%%%%%%%%%
\section{Introduction}

Throughout this paper we permit graphs to have loops and parallel edges.  We use standard graph theory terminology and notation as in \cite{BM1976} and \cite{D}.  In particular, if $G$ is a graph and $X\subseteq V(G)$ and $S\subseteq E(G)$, we let $G-X$ and $G-S$ denote the subgraph of $G$ obtained by removing all vertices in $X$ (and their incident edges), and removing all edges in $S$ (but keeping all vertices), respectively. We also write $G[X]$ to denote the subgraph induced on $X$.

For a graph $G = (V,E)$, we define a $k$-\emph{coloring} to be a function $f : V \rightarrow \{1,2,\ldots,k\}$
with the property that $f(u) \neq f(v)$ for every $uv \in E$.  If $G$ is equipped with an orientation and $v \in V$, we let $\delta^-(v)$ denote the set of edges
that have $v$ as terminal vertex and $\delta^+(v)$ the set of edges with $v$ as initial vertex; we also put $\delta(v) = \delta^+(v) \cup \delta^-(v)$.
If $\Gamma$ is an additive abelian group, a function $\phi : E \rightarrow \Gamma$ is called a \emph{flow} or a $\Gamma$-\emph{flow} if the following rule is satisfied at every $v \in V$:
$$
 \sum_{e \in \delta^+(v)} \phi(e) = \sum_{e \in \delta^-(v)} \phi(e) .
$$
We say that $\phi$ is \emph{nowhere-zero} if $0 \not\in \phi(E)$. If $\Gamma = \Z$ and $|\phi(e)| < k$ for every $e \in E$ we call $\phi$ a $k$-\emph{flow}.
Note that if $\phi$ is a flow and we reverse the direction of an edge $e$, we may replace $\phi(e)$ with $-\phi(e)$ and this gives us a flow relative to this new orientation.
Since this operation preserves the properties of nowhere-zero and $k$-flow, the presence of a nowhere-zero $\Gamma$-flow or a nowhere-zero $k$-flow depends only
on the underlying graph and not on the particular orientation.

The study of nowhere-zero flows was initiated by Tutte \cite{bT1} who observed that these are dual to colorings in planar graphs.  Namely, he proved the following.

\begin{theorem}[Tutte \cite{bT1}]
\label{duality}
  Let $G$ and $G^*$ be dual planar graphs and orient the edges of $G$ arbitrarily.  If\/ $\Gamma$ is an abelian group with $|\Gamma| = k$, the number of $k$-colorings of $G^*$ is equal to $k$ times the number of
  nowhere-zero $\Gamma$-flows of $G$.
\end{theorem}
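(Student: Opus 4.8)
The plan is to prove the classical tension--flow duality for the group $\Gamma$. Fix a plane embedding of $G$ with dual $G^*$; we may assume $G$, and hence $G^*$, is connected. The edges of $G$ and of $G^*$ are in a natural bijection --- each dual edge crosses exactly one primal edge --- and we identify $E := E(G) = E(G^*)$ along this bijection. We orient $G^*$ by the standard convention that each dual edge is obtained from the corresponding oriented edge of $G$ by a quarter-turn rotation; by the edge-reversal argument noted above the number of nowhere-zero $\Gamma$-flows of $G$ is independent of the chosen orientation of $G$, so there is no loss in coupling the two orientations this way.

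Step one is to match $k$-colorings of $G^*$ with nowhere-zero \emph{tensions} of $G^*$. Identify the color set $\{1,\dots,k\}$ with $\Gamma$ via an arbitrary bijection, so a $k$-coloring becomes a map $c : V(G^*) \to \Gamma$ with $c(u) \ne c(v)$ for every edge $uv$ of $G^*$. To each such $c$ we associate its coboundary $\tau_c \in \Gamma^E$, where $\tau_c(e) = c(h(e)) - c(t(e))$ and $h(e), t(e)$ are the head and tail of $e$ in $G^*$; call a function a \emph{tension} if it equals $\tau_c$ for some $c$. Then $c$ is a proper coloring precisely when $\tau_c$ is nowhere-zero, and $\tau_c = \tau_{c'}$ precisely when $c - c'$ is constant on $V(G^*)$ (here connectivity of $G^*$ is used). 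Hence $c \mapsto \tau_c$ is a $k$-to-one surjection onto the set of tensions, so the number of $k$-colorings of $G^*$ equals $k$ times the number of nowhere-zero tensions of $G^*$.

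Step two is to show that, under our edge identification, the tensions of $G^*$ are exactly the $\Gamma$-flows of $G$. First I would establish that $\tau \in \Gamma^E$ is a tension of $G^*$ if and only if the signed sum $\sum_{e}\pm\tau(e)$ around every cycle of $G^*$ is $0$: the forward direction is immediate since a coboundary telescopes to $0$ around any cycle, and the converse is obtained by fixing a spanning tree of $G^*$, integrating $\tau$ along tree paths to define a candidate potential $c$, and verifying $\tau = \tau_c$ on each non-tree edge using that its fundamental cycle has vanishing $\tau$-sum; the condition then propagates to all cycles because cycle vectors are $\Z$-combinations of fundamental-cycle vectors and $z \mapsto \sum_e z_e\,\tau(e)$ is a homomorphism. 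In particular it suffices to test the vanishing on the face boundaries of the plane graph $G^*$, since these $\Z$-generate its cycle space. But the faces of $G^*$ are precisely the vertices of $G$, and with the quarter-turn convention the signed $\tau$-sum around the face of $G^*$ at a vertex $v$ of $G$ equals exactly $\sum_{e\in\delta^+(v)}\tau(e) - \sum_{e\in\delta^-(v)}\tau(e)$. Hence $\tau$ is a tension of $G^*$ iff $\tau$ obeys the flow rule at every vertex of $G$, i.e.\ iff $\tau$ is a $\Gamma$-flow of $G$; consequently the nowhere-zero tensions of $G^*$ are exactly the nowhere-zero $\Gamma$-flows of $G$. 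Combining with Step one gives the claimed identity.

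The one genuinely delicate point is the cycle-sum characterization of tensions in Step two: when $\Gamma$ is neither a field nor torsion-free one cannot speak of an ``orthogonal complement'' of the cycle space, so the argument must run through a spanning tree and the integrality of cycle-space relations rather than linear algebra over a field. The remaining ingredients --- the quarter-turn orientation convention, the identification of faces of $G^*$ with vertices of $G$, and the mild bookkeeping needed when a face boundary of $G^*$ is a closed walk rather than a simple cycle (as happens near cut-edges, equivalently loops in $G$) --- are standard and present no real obstacle.
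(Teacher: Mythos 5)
The paper does not prove Theorem~\ref{duality}; it is quoted from Tutte's work with a citation only, so there is no in-paper argument to compare against. Your proposal is the standard (and correct) tension--flow duality proof: the coboundary map $c \mapsto \tau_c$ is a $k$-to-one surjection from proper colorings of the connected graph $G^*$ onto nowhere-zero tensions, and under the quarter-turn identification of $E(G)$ with $E(G^*)$ the tensions of $G^*$ are exactly the $\Gamma$-flows of $G$, since the bounded face boundaries of $G^*$ are the vertex stars of $G$ and they $\Z$-generate the cycle space of $G^*$. You correctly isolate the two points that need genuine care for a general abelian group: the spanning-tree potential construction replacing any orthogonal-complement argument, and the fact that face boundaries generate the cycle space integrally (not just over a field). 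The reduction to $G$ connected is harmless here because mutual planar duality already forces connectivity, and your remark about loops of $G$ versus cut-edges of $G^*$ disposes of the degenerate cases consistently on both sides. I see no gap.
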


The above theorem has the curious corollary that for planar graphs, the number of nowhere-zero flows in an abelian group $\Gamma$ depends only on $|\Gamma|$.  Tutte proved that this holds more generally for arbitrary graphs.  That is, for any  abelian groups $\Gamma_1$ and $\Gamma_2$ with $|\Gamma_1| = |\Gamma_2|$, the number of nowhere-zero $\Gamma_1$-flows is equal to the number of nowhere-zero $\Gamma_2$-flows in every oriented graph.  Furthermore, the inherent monotonicity in coloring (every graph with a $k$-coloring has a $k'$-coloring for every $k' \ge k$) is also present in flows.  This follows from another theorem of Tutte asserting that for every $k \ge 2$, an oriented graph $G$ has a nowhere-zero $k$-flow if and only if it has a nowhere-zero $\Z_k$-flow.  In addition to establishing these fundamental properties, Tutte made three fascinating conjectures concerning nowhere-zero flows that have directed the field since then.

\begin{conjecture}[Tutte \cite{bT1,bT2,BM1976}]
  Let $G$ be an oriented 2-edge-connected graph.
  \begin{enumerate}
    \item $G$ has a nowhere-zero 5-flow.
    \item If $G$ does not have a Petersen graph minor, it has a nowhere-zero 4-flow.
    \item If $G$ is 4-edge-connected, it has a nowhere-zero 3-flow.
  \end{enumerate}
\end{conjecture}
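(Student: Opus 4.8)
These three assertions are among the most celebrated open problems in graph theory, with only part~(2), and only in the cubic case, having been resolved; a ``proof proposal'' here can therefore only be an account of the standard reductions and the partial results they produce. The first step, common to all three parts, is a reduction to cubic graphs. If $v$ has degree $d \ge 4$, one replaces $v$ by a small cubic gadget on $d$ leaves (equivalently one performs a sequence of Mader--Fleischner vertex splittings); a nowhere-zero $k$-flow on the cubic refinement $G'$ restricts to one on $G$, so a minimum counterexample may be taken cubic, and then --- after discarding small edge-cuts --- cyclically $6$-edge-connected with no nontrivial $3$-edge-cut for part~(1) (a \emph{snark}), Petersen-minor-free for part~(2), and cyclically $4$-edge-connected for part~(3). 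The one point requiring care is that the minor-closed hypothesis in~(2) and the connectivity hypothesis in~(3) survive the splitting.

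For part~(2) the plan is the one executed by Robertson, Sanders, Seymour and Thomas. For cubic $G$ a nowhere-zero $\Z_2 \times \Z_2$-flow is exactly a proper $3$-edge-coloring; one establishes a structure theorem presenting every Petersen-minor-free cubic graph as a combination, across small cutsets, of planar pieces and finitely many exceptional blocks, colors the planar pieces by the Four Color Theorem together with Theorem~\ref{duality}, colors the exceptional blocks by inspection, and shows the colorings can be pieced together. The main obstacle is the structure theorem, whose proof is hundreds of pages long and partly still unpublished.

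For part~(1) the benchmark results are Jaeger's $8$-flow theorem and Seymour's $6$-flow theorem; the plan is to analyse a minimum counterexample, force it by discharging and reductions at small cuts to be a high-girth cyclically $6$-edge-connected snark, and then produce a nowhere-zero $\Z_5$-flow by a Kempe-chain recoloring argument or by exploiting the abundance of flows guaranteed by such strong connectivity. For part~(3), one uses the dual reformulation --- a nowhere-zero $\Z_3$-flow is an orientation in which in-degree and out-degree agree modulo $3$ at every vertex --- together with the group-connectivity framework of Jaeger, Linial, Payan and Tarsi; this is what underlies Thomassen's theorem that $8$-edge-connectivity suffices and its sharpening to $6$-edge-connectivity by Lov\'asz, Thomassen, Wu and Zhang, and the plan is to drive the same ``fix one vertex at a time'' contraction/induction down to edge-connectivity~$4$.

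The real obstacle is the same in all three parts: splitting, lifting, discharging, Kempe chains and the $\Z_3$-connectivity method each cost a constant amount of connectivity or a constant number of forbidden flow values, so bridging the final gap --- $6 \to 5$ in~(1), the full structure theorem in~(2), $6 \to 4$ in~(3) --- appears to demand a genuinely new idea. I therefore do not expect to complete any of these proofs; the realistic aim is to assemble the reductions above and combine them with the strongest available partial theorems.
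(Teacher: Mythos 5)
This statement is one of Tutte's conjectures; the paper states it as an open problem (and immediately says all three parts remain open), so there is no proof in the paper to compare against, and you rightly do not claim one --- your text is a survey of reductions and partial results (Jaeger, Seymour, Lov\'asz--Thomassen--Wu--Zhang, Robertson--Sanders--Seymour--Thomas), which matches how the paper itself treats the conjecture via Theorem~\ref{classicflow}. One caution about the survey itself: the ``reduce to cubic graphs'' step you describe as common to all three parts does not apply to part~(3), since splitting vertices destroys $4$-edge-connectivity and a cubic graph has a nowhere-zero $3$-flow only if it is bipartite; the standard reductions for the $3$-flow conjecture instead stay in the $\Z_3$-connectivity framework (e.g.\ Kochol's reduction to $5$-edge-connected graphs), which is also the setting of the Lov\'asz--Thomassen--Wu--Zhang theorem you cite.
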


Despite a wealth of research, all three of Tutte's conjectures remain open.  Below we have summarized some of the most significant results  to date on the presence of nowhere-zero flows.

\begin{theorem}
\label{classicflow}
  Let $G$ be an oriented graph.
  \begin{enumerate}
    \item {\rm (Seymour \cite{S})} If $G$ is 2-edge-connected, it has a nowhere-zero 6-flow.
    \item {\rm (Jaeger \cite{J})} If $G$ is 4-edge-connected, it has a nowhere-zero 4-flow.
    \item {\rm (Lov\'asz, Thomassen, Wu, Zhang \cite{LTWZ})} If $G$ is 6-edge-connected, it has a nowhere-zero 3-flow.
  \end{enumerate}
\end{theorem}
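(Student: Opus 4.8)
The plan is to handle the three parts separately, since they are logically independent; parts~(1) and~(2) admit short self-contained arguments, whereas part~(3) carries essentially all of the difficulty.

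For part~(1) I would exploit $\mathbb{Z}_6\cong\mathbb{Z}_2\times\mathbb{Z}_3$: it then suffices to produce a $\mathbb{Z}_2$-flow $\phi_2$ and a $\mathbb{Z}_3$-flow $\phi_3$ whose supports together cover $E(G)$, equivalently a subgraph $H\subseteq G$ that admits a nowhere-zero $\mathbb{Z}_3$-flow and for which every vertex of $G$ has even degree in the spanning subgraph with edge set $E(G)\setminus E(H)$ (that subgraph then carries the all-ones $\mathbb{Z}_2$-flow). One builds such an $H$ by induction on $|V(G)|$, in the spirit of Seymour: handle low-degree vertices and parallel edges directly, and otherwise contract a suitably chosen $2$-edge-connected subgraph $B$ carrying a nowhere-zero $\mathbb{Z}_3$-flow, apply the inductive hypothesis to the graph obtained by contracting all edges of $B$, and lift the resulting subgraph together with the parity data back through the contraction (the class of graphs with a nowhere-zero $\mathbb{Z}_3$-flow being closed under the $1$- and $2$-sums that arise). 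Tracking the parity condition across this contraction/expansion step is the one delicate point here.

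For part~(2), the results of Tutte recalled above identify a nowhere-zero $4$-flow with a nowhere-zero $\mathbb{Z}_2\times\mathbb{Z}_2$-flow, and the latter exists iff $E(G)=C_1\cup C_2$ for two even subgraphs $C_1,C_2$: given such a decomposition, let $\phi(e)$ record membership of $e$ in $C_1$ and in $C_2$, and each coordinate satisfies the conservation law exactly because the corresponding $C_i$ has all degrees even. To obtain the decomposition I would use the Nash--Williams/Tutte tree-packing theorem: since $G$ is $4$-edge-connected, for every partition of $V$ into $r$ classes there are at least $2r\ge 2(r-1)$ edges joining distinct classes, so $G$ has two edge-disjoint spanning trees $T_1,T_2$. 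Taking $C_i$ to be the symmetric difference, over $\mathbb{Z}_2$, of the fundamental cycles with respect to $T_i$ of all edges outside $T_i$, each $C_i$ is an even subgraph containing $E(G)\setminus E(T_i)$; since $T_1$ and $T_2$ are edge-disjoint, $C_1\cup C_2\supseteq(E\setminus E(T_1))\cup(E\setminus E(T_2))=E$, as needed.

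Part~(3), due to Lov\'asz, Thomassen, Wu and Zhang, is where all the real work lies, and I would not expect a short proof. The natural route is to prove the stronger statement that every $6$-edge-connected graph is $\mathbb{Z}_3$-connected --- phrased via orientations realizing a prescribed $\mathbb{Z}_3$-boundary modulo $3$ --- and to run an induction in which vertices of small degree are eliminated by edge-lifting (splitting-off) operations selected, via Mader-type theorems, so as to preserve the relevant connectivity hypothesis, until one is left with a bounded list of base graphs that can be verified directly. The main obstacle is to design the inductive invariant so that an admissible lifting is always available and the boundary data can be propagated through each reduction. Granting (1)--(3), Theorem~\ref{classicflow} follows immediately.
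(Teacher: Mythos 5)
The paper does not prove Theorem~\ref{classicflow}; it is stated purely as a citation of three classical results (Seymour, Jaeger, and Lov\'asz--Thomassen--Wu--Zhang), so there is no internal proof to compare against. Judged on its own terms, your proposal is complete only for part~(2): the identification of a nowhere-zero $\Z_2\times\Z_2$-flow with a cover of $E(G)$ by two even subgraphs, the Nash--Williams/Tutte partition count showing that a $4$-edge-connected graph contains two edge-disjoint spanning trees, and the fundamental-cycle construction of $C_1,C_2$ are all correct and together constitute Jaeger's standard proof.

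Parts~(1) and~(3), however, are outlines rather than proofs, and in each case the step you defer is the entire content of the theorem. For~(1), the claim that one can always ``contract a suitably chosen $2$-edge-connected subgraph $B$ carrying a nowhere-zero $\Z_3$-flow'' so that the induction closes is precisely what Seymour's structural argument establishes (in the form of his $2$-base decomposition, restated in this paper as Theorem~\ref{seymourdecomp}); you give no construction of such a $B$ and no argument that the parity data can be lifted back through the contraction, which you yourself flag as the delicate point. For~(3), two sentences describing ``induction plus splitting-off plus base cases'' do not reconstruct the Lov\'asz--Thomassen--Wu--Zhang argument, whose substance lies in a carefully strengthened inductive hypothesis (prescribed $\Z_3$-boundaries together with pre-oriented edges at certain low-degree vertices) and a delicate discharging of the degree condition; this is far from routine and cannot be taken for granted. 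So the proposal amounts to a correct proof of part~(2) together with accurate but non-self-contained pointers to the literature for parts~(1) and~(3) --- which, to be fair, is exactly how the paper itself treats the statement.
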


Beyond showing that a graph $G$ has a $k$-coloring, one may look to find lower bounds on the number of $k$-colorings.
Although there are infinite families of planar graphs where any two 4-colorings differ by a permutation of the colors (so there are just $4! = 24$ in total),
for 5-colorings the following theorem provides an exponential lower bound.

\begin{theorem}[Birkhoff, Lewis \cite{BL}]
  Every simple planar graph on $n$ vertices has at least $2^n$ 5-colorings.
\end{theorem}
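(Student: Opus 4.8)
The plan is to prove, by induction on $n$, the stronger statement that every simple planar triangulation on $n\ge 3$ vertices has at least $60\cdot 2^{\,n-3}$ $5$-colorings. This suffices: for $n\le 2$ the bound $2^n$ is immediate, while for $n\ge 3$ every simple planar graph $G$ is a spanning subgraph of some triangulation on the same vertex set, and deleting edges can only increase the number of $5$-colorings, so $G$ has at least $60\cdot 2^{\,n-3}=7.5\cdot 2^n\ge 2^n$ of them. Write $P(H)$ for the number of $5$-colorings of $H$. The base cases $n=3$ (a triangle, $P=60$) and $n=4$ ($K_4$, the unique triangulation on $4$ vertices, $P=120=60\cdot 2$) are immediate.

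For the inductive step let $T$ be a triangulation on $n\ge 5$ vertices, and distinguish two cases. First suppose $T$ has a separating triangle $xyz$. Then $T$ decomposes into two triangulations $T_1,T_2$ that meet exactly in the triangle $xyz$, with $|V(T_1)|+|V(T_2)|=n+3$ and $4\le|V(T_i)|\le n-1$. Because the symmetric group on the five colors acts transitively on the $5$-colorings of a triangle and sends $5$-colorings of $T_i$ to $5$-colorings of $T_i$, every $5$-coloring of $xyz$ extends to $T_i$ in the same number of ways, namely $P(T_i)/60$; gluing along $xyz$ gives $P(T)=P(T_1)P(T_2)/60$, and the induction hypothesis yields $P(T)\ge 60\cdot 2^{\,|V(T_1)|+|V(T_2)|-6}=60\cdot 2^{\,n-3}$.

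Now suppose $T$ has no separating triangle. Then $T$ has no vertex of degree $3$ (its link triangle would separate, as $n\ge5$), and since $T$ is planar it has a vertex $v$ of degree $d\in\{4,5\}$, whose neighborhood is an induced cycle $v_1v_2\cdots v_d$. The key observation is that no chord $v_iv_j$ of this cycle lies in $T$, for otherwise $vv_iv_j$ would be a triangle with link-vertices on both sides, hence separating. Thus in $T-v$ the only pairs of neighbors of $v$ that can be colored alike are these non-edges. A $5$-coloring $c$ of $T-v$ extends to $v$ in exactly $5-|c(N(v))|$ ways, and since each color class inside $N(v)$ is an independent set in a cycle, hence of size at most $2$, this number equals $(5-d)$ plus the number of monochromatic pairs among $N(v)$. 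Summing over $c$,
\[
  P(T)\;=\;(5-d)\,P(T-v)\;+\!\!\sum_{\substack{1\le i<j\le d\\ v_iv_j\notin E(T)}}\!\! P\bigl((T-v)/(v_i{\equiv}v_j)\bigr),
\]
where $(T-v)/(v_i{\equiv}v_j)$ is the simple graph obtained by identifying $v_i$ and $v_j$ in $T-v$; there are $d$ such terms when $d=5$ and two when $d=4$. Each $(T-v)/(v_i{\equiv}v_j)$ is simple planar on $n-2$ vertices, since $v_i$ and $v_j$ lie on the face of $T-v$ left by deleting $v$ (so one may add the edge $v_iv_j$ inside that face and contract it). Applying the induction hypothesis after completing these graphs to triangulations, and using $P(T-v)\ge 60\cdot 2^{\,n-4}$, one gets $P(T)\ge 5\cdot 60\cdot 2^{\,n-5}>60\cdot 2^{\,n-3}$ when $d=5$ and $P(T)\ge 60\cdot 2^{\,n-4}+2\cdot 60\cdot 2^{\,n-5}=60\cdot 2^{\,n-3}$ when $d=4$, closing the induction.

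I expect the main obstacle to be conceptual: the naive ``delete a vertex of degree at most $5$'' induction loses a factor of $2$ precisely when the deleted vertex has a chord across its link, and the point is to recognize that such a chord is exactly a separating triangle and to peel it off with the multiplicative split instead. The remaining ingredients are routine to verify: the bookkeeping behind the displayed identity (in particular that monochromatic pairs in $N(v)$ are non-edges and that color classes there have size at most $2$---this is where the no-chord property is used), the elementary fact that every simple planar graph on $m\ge3$ vertices is a spanning subgraph of a triangulation on $m$ vertices (so the triangulation-only induction hypothesis applies to $T-v$ and to the identified graphs), and the fact that identifying two vertices on a common face of a plane graph preserves planarity.
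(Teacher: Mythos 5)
The paper does not prove this statement; it is quoted from Birkhoff and Lewis \cite{BL} and used as a black box. Your argument is correct and complete, and it is essentially the classical Birkhoff--Lewis induction: your strengthened hypothesis $60\cdot 2^{n-3}$ for triangulations is exactly their bound $\lambda(\lambda-1)(\lambda-2)(\lambda-3)^{n-3}$ at $\lambda=5$, the separating-triangle case is their multiplicative splitting, and the chordless-link analysis at a vertex of degree $4$ or $5$ is the standard way to close the induction. All the steps you flag as ``routine'' (extendability to a triangulation, planarity of the identification across the face left by $v$, independent sets in $C_4$ and $C_5$ having size at most $2$) do check out, so nothing further is needed.
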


By Theorem~\ref{duality} the above result implies that 2-edge-connected planar graphs have exponentially many nowhere-zero $\Z_5$-flows. More recently, Thomassen \cite{cT1} showed a similar exponential bound for the number of 3-colorings of triangle-free planar graphs, which yields exponentially many nowhere-zero $\Z_3$-flows in any 4-edge-connected planar graph.  In a recent work, Dvo\v{r}\'ak and the latter two authors of this paper investigated the problem of finding exponentially many nowhere-zero flows in general graphs and established the following results.

\begin{theorem}[Dvo\v{r}\'ak, Mohar, and \v{S}\'amal \cite{DMS}]
\label{flowthms}
  Let $G$ be an oriented graph with $n$ vertices and $m$ edges.
  \begin{enumerate}
    \item If $G$ is 2-edge-connected, it has $2^{2(m-n)/9}$ nowhere-zero $\Z_6$-flows.
    \item If $G$ is 4-edge-connected, it has $2^{n/250}$ nowhere-zero $\Z_2 \times \Z_2$-flows.
    \item If $G$ is 6-edge-connected, it has $2^{(n-2)/12}$ nowhere-zero $\Z_3$-flows.
  \end{enumerate}
\end{theorem}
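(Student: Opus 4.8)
The plan is to prove all three parts by the same two‑move strategy: first a soft \emph{toggling} step that turns one nowhere‑zero flow into exponentially many, and then a structural step that feeds the toggling enough ``room'' to make the exponent linear. For the toggling step, suppose we already have one nowhere‑zero flow and can identify a set $F\subseteq E$ of ``flexible'' edges with the property that perturbing the flow inside $F$ cannot create a zero; then every flow of the ambient group supported on $F$ may be added, giving $|\Gamma|^{\beta_1(F)}$ flows in all, where $\beta_1(F)=|F|-|V|+c(V,F)$ is the cyclomatic number of the spanning subgraph with edge set $F$. Writing $\Z_6=\Z_2\times\Z_3$, a nowhere‑zero $\Z_6$‑flow is a pair $(\phi_2,\phi_3)$ with $\phi_2$ a $\Z_2$‑flow, $\phi_3$ a $\Z_3$‑flow, and $\supp(\phi_2)\cup\supp(\phi_3)=E$, and Seymour's theorem supplies one; keeping $\phi_2$ fixed and replacing $\phi_3$ by $\phi_3+\psi$ for any $\Z_3$‑flow $\psi$ supported inside $\supp(\phi_2)$ stays nowhere‑zero, since $\phi_2\equiv 1$ on $\supp(\phi_2)$ and $\phi_3$ is untouched off it, giving $3^{\beta_1(\supp(\phi_2))}$ flows (with a little care on the overlap $\supp(\phi_2)\cap\supp(\phi_3)$, a symmetric toggle of $\phi_2$ inside $\supp(\phi_3)$ multiplies this further). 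For $\Z_2\times\Z_2$ one argues identically: Jaeger's theorem gives even subgraphs $E_1,E_2$ with $E_1\cup E_2=E$, and replacing $E_1$ by $E_1\triangle F$ for any even subgraph $F\subseteq E_2$ keeps the union equal to $E$, yielding $2^{\beta_1(E_2)}$ flows. For $\Z_3$ there is no internal room, so instead I would use the standard bijection between nowhere‑zero $\Z_3$‑flows of $G$ and its \emph{mod‑$3$ orientations} (orientations with $d^+(v)\equiv d^-(v)\pmod 3$ everywhere): reversing all edges of a directed cycle preserves the mod‑$3$ property and changes the orientation, so $k$ edge‑disjoint directed cycles in one mod‑$3$ orientation yield $2^k$ distinct nowhere‑zero $\Z_3$‑flows by reversing arbitrary subsets.

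It remains to arrange that the flexible structure is \emph{linearly large}. For part~(1) this means choosing the Seymour pair so that $\beta_1(\supp(\phi_2))\ge \tfrac{2}{9\log_2 3}\,(m-n)$, or, more robustly, so that the combined toggles give a product of the required size; I would do this by re‑running the standard ``build the flow one ear at a time'' proof of Seymour's theorem, using that $2$‑edge‑connectivity gives an ear decomposition with $m-n+1$ ears, forcing every long ear into $\supp(\phi_2)$ so that it contributes an independent cycle, and charging the length‑$1$ ears (single edges between old vertices) to nearby longer structure. For part~(2) I would start from two edge‑disjoint spanning trees $T_1,T_2$ (present since $G$ is $4$‑edge‑connected), so that $(V,T_1\cup T_2)$ is connected with cyclomatic number $n-1$, and then realize a covering pair $(E_1,E_2)$ of even subgraphs with $E_2$ containing a fixed positive fraction of $T_1\cup T_2$; alternatively, use that $4$‑edge‑connected graphs are supereulerian together with a local‑augmentation argument that enlarges a spanning connected even subgraph. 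For part~(3) the plain directed‑cycle‑reversal bound is by itself too weak (for $6$‑edge‑connected graphs of large girth there are simply not enough edges to host $\Omega(n)$ edge‑disjoint directed cycles), so rather than use the Lov\'asz--Thomassen--Wu--Zhang theorem as a black box I would revisit its inductive proof and extract, from the sequence of reductions producing the mod‑$3$ orientation, linearly many independent binary choices.

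So the toggling step is routine, and the whole difficulty is concentrated in the structural step: each classical theorem guarantees only that \emph{a} flow exists, whereas we need one that is \emph{quantitatively rich}. I expect part~(3) to require the most work, both because the $\Z_3$‑flow theorem has the subtlest proof and because, unlike the $\Z_6$ and $\Z_2\times\Z_2$ cases, it offers no free coordinate to perturb; the generous constants ($\tfrac1{250}$ and $\tfrac1{12}$) in the statement presumably reflect the losses incurred in these structural arguments.
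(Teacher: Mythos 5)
First, a point of orientation: the paper does not prove Theorem~\ref{flowthms} at all --- it is quoted from \cite{DMS} as prior work. What the paper does prove (Section~\ref{sect:3}) are strengthened versions of parts~1 and~2 (Theorem~\ref{ourflowthm}), via Lemma~\ref{supportflow}, and those proofs are the right benchmark for your proposal. Your toggling step is essentially the idea behind Lemma~\ref{supportflow} and is correct as far as it goes: adding a $\Gamma_1$-flow supported inside the support of the other coordinate preserves nowhere-zeroness, and the number of such flows is $|\Gamma_1|^{\beta_1}$ of the relevant support.

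The genuine gap is exactly where you locate it --- the structural step --- and the plans you sketch there would not go through. For part~1 your primary toggle perturbs the $\Z_3$-coordinate inside $\supp(\phi_2)$; but $\supp(\phi_2)$ is an even subgraph whose cyclomatic number can be as small as $1$ (a Hamilton cycle), so $3^{\beta_1(\supp(\phi_2))}$ need not be exponential, and no variant of Seymour's ear-by-ear construction obviously forces $\beta_1(\supp(\phi_2))=\Omega(m-n)$. The paper toggles in the opposite coordinate: it first enlarges $\supp(\psi)$ by decomposing $\supp(\phi)$ into cycles and adding to $\psi$ a suitable multiple of the elementary flow on each cycle, so that at most a $1/k$ fraction of each cycle lies outside $\supp(\psi)$; combined with the reduction to cubic graphs (Lemma~\ref{reduce2cubic}), where $|\supp(\phi)|\le |V|$ automatically, this yields $\beta_1(\supp(\psi'))\ge (m-n)/3$. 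For part~2, ``containing a positive fraction of two spanning trees'' buys nothing, since a forest contributes zero to the cyclomatic number; the trick you are missing is the symmetry of $\Z_2\times\Z_2$: permute the three nonzero values so that at least $m/3$ edges carry $(0,1)$, whence $t=|\supp(\phi_1)|\le 2m/3$ and $m-n-t/2\ge n/3$ using $m\ge 2n$. Your parenthetical double toggle is also not merely ``a little care'': on an edge of $\supp(\phi_2)\cap\supp(\phi_3)$ both coordinates can be driven to zero simultaneously. Finally, for part~3 you offer only the intention to re-open the Lov\'asz--Thomassen--Wu--Zhang induction; that is the entire content of that case and is not supplied (nor does the present paper reprove it).
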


We have revisited this topic and found shorter proofs and improved exponential bounds in the first two instances appearing in the following theorem.

\begin{theorem}
\label{ourflowthm}
  Let $G$ be an oriented graph with $n$ vertices and $m$ edges.
  \begin{enumerate}
    \item If $G$ is 3-edge-connected, it has $2^{ (m-n)/3}$ nowhere-zero $\Z_6$-flows.
    \item If $G$ is 4-edge-connected, it has $2^{n/3}$ nowhere-zero $\Z_2 \times \Z_2$-flows.
  \end{enumerate}
\end{theorem}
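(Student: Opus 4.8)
\emph{Proof plan.}
Both parts follow the same template. Decompose the group as a direct product, reinterpret a nowhere‑zero flow as a pair of flows whose \emph{supports cover} $E$, produce one well‑structured such pair, and then harvest an exponentially large family of further solutions by perturbing one coordinate through a cycle space. The product decompositions $\Z_6\cong\Z_2\times\Z_3$ and $\Z_2\times\Z_2$ are exactly what make this work: the nowhere‑zero condition becomes ``the two supports cover $E$'', and this is preserved when one adds an even subgraph to either support. Throughout, a $\Z_2$-flow is just (the indicator of) an even subgraph, and $\beta(H)=|E(H)|-|V(H)|+c(H)$ denotes the cycle rank.

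For part (2), view a nowhere‑zero $\Z_2\times\Z_2$-flow as an ordered pair $(C_1,C_2)$ of even subgraphs with $C_1\cup C_2=E$ (the supports of the two coordinate $\Z_2$-flows). I would start from the easy observation that if $G$ is Eulerian then $(\psi,\mathbf 1)$ is nowhere‑zero for \emph{every} even subgraph $\psi$, where $\mathbf 1$ is the all-$1$ $\Z_2$-flow; this alone yields $2^{\,m-n+1}\ge 2^{\,n/3}$ flows. In general I would invoke the Nash--Williams--Tutte theorem to fix two edge‑disjoint spanning trees $T_1,T_2$; Jaeger's argument then supplies a nowhere‑zero $\Z_2\times\Z_2$-flow $(C_1,C_2)$ with $C_1\supseteq E\setminus E(T_1)$ and $C_2\supseteq E(T_1)$, so that $C_1$ and $C_2$ are connected and spanning. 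Since $E\setminus C_2\subseteq C_1$, for every even subgraph $D\subseteq C_2$ the pair $(C_1\triangle D,\,C_2)$ is again a nowhere‑zero $\Z_2\times\Z_2$-flow; distinct $D$ give distinct flows, so $G$ has at least $2^{\beta(C_2)}=2^{\,|E(C_2)|-n+1}$ of them. It remains to force $\beta(C_2)\ge n/3$: one gains by choosing $C_2$ of maximum size among the even subgraphs containing $E(T_1)$ (the deficiency being governed by a minimum $T$-join in $G-E(T_1)$), and by using the Eulerian observation to dispose of the remaining sparse cases.

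For part (1), write $\Z_6\cong\Z_2\times\Z_3$, so that a nowhere‑zero $\Z_6$-flow is a pair $(\alpha,\beta)$ with $\alpha$ an even subgraph $C_\alpha$, $\beta$ a $\Z_3$-flow, and $C_\alpha\cup\supp\beta=E$. If $G$ has a nowhere‑zero $\Z_3$-flow $\beta$ we are done at once, since every even subgraph works as $C_\alpha$, giving $2^{\,m-n+1}$ flows. The content is therefore a quantitative sharpening of Seymour's $6$-flow theorem: using the $3$-edge‑connectivity of $G$, produce a $\Z_3$-flow $\beta$ whose zero set $Z=E\setminus\supp\beta$ has at most $\tfrac23(m-n)+1$ edges and is contained in some even subgraph of $G$. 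Given such a $\beta$, every even subgraph $C_\alpha$ with $C_\alpha\supseteq Z$ pairs with $\beta$ to form a nowhere‑zero $\Z_6$-flow, and the number of such $C_\alpha$ is $2^{\beta(G-Z)}$ with $\beta(G-Z)=m-|Z|-n+c(G-Z)\ge m-n-|Z|+1\ge \tfrac13(m-n)$, exactly as required (small cases being treated separately if needed).

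\emph{The main obstacle.} In both parts the whole weight of the argument rests on the structural lemma that licenses the perturbation: for part (1), the existence of a $\Z_3$-flow with a \emph{small}, even‑subgraph‑extendable zero set — a genuine refinement of Seymour's theorem, not a black‑box consequence of it — and for part (2), the bound $\beta(C_2)\ge n/3$, whose delicate regime is that of sparse, non‑Eulerian $4$-edge‑connected graphs, which are covered neither by the Eulerian observation nor by the ``many edges outside a spanning tree'' estimate and so need an extra idea (e.g.\ removing a short $T$-join, or an edge‑splitting reduction towards the regular case). Everything else — the group decompositions, the reinterpretation as support covers, and the perturbation count — is routine linear algebra over $\Z_2$.
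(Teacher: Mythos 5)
Your overall architecture---decompose the group as $\Z_2\times\Z_k$, read the nowhere-zero condition as ``the two coordinate supports cover $E$,'' produce one structured pair, and then count perturbations of one coordinate by a cycle space inside the other's support---is exactly the paper's. But the two quantitative claims on which, as you yourself say, ``the whole weight of the argument rests'' are left unproved, and they are precisely the content of the theorem; as written, the proposal is a plan with a genuine gap at its center rather than a proof.

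For part (1), the paper obtains your ``refined Seymour'' statement by two devices you are missing. First (Lemma~\ref{reduce2cubic} together with Observation~\ref{forestdiff}), a 3-edge-connected graph is expanded to a 3-edge-connected \emph{cubic} graph on $2(m-n)$ vertices from which $G$ is recovered by contracting a forest; this changes nothing for counting flows, and in a cubic graph the support of the $\Z_2$-part of Seymour's flow is a disjoint union of cycles and hence has at most $2(m-n)$ edges. Second (Lemma~\ref{supportflow}), that support is written as a disjoint union of cycles $C_1,\dots,C_s$ and, cycle by cycle, a suitable multiple of the elementary $\Z_3$-flow on $C_i$ is added to the $\Z_3$-coordinate so that at most $\tfrac13|E(C_i)|$ of its edges remain outside $\supp(\psi)$; this yields a zero set $Z$ with $|Z|\le\tfrac23(m-n)$ contained in the even subgraph, which is exactly the lemma you postulate but do not prove. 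For part (2), your route does not close: choosing $C_2$ as a maximum even subgraph containing $E(T_1)$ only bounds its deficiency by a minimum $T$-join disjoint from $T_1$, which can a priori have up to $n-1$ edges, leaving only $\beta(C_2)\ge m-2n+2$, and you concede you have no argument for the sparse non-Eulerian case. The paper sidesteps this entirely: starting from Jaeger's nowhere-zero $\Z_2\times\Z_2$-flow, it permutes the three nonzero values so that at least $\tfrac13 m$ edges carry the value $(0,1)$, whence the first coordinate has support of size $t\le\tfrac23 m$; the same cycle-by-cycle flipping (now with $k=2$) enlarges the second coordinate's support to at least $m-t/2\ge\tfrac23 m\ge\tfrac43 n$, and the count $2^{m-n-t/2}\ge 2^{n/3}$ follows. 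To complete your write-up you would need to either prove your two structural claims or adopt these devices.
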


Theorem \ref{ourflowthm} will be proved in Section \ref{sect:3}, see Theorems \ref{flowthm1} and \ref{flowthm2}.

List-coloring provides a more general setting than that of standard coloring.  Let $G = (V,E)$ be a graph and let $\mathcal{L} = \{ L_v \}_{v \in V}$ be a family of sets indexed by the vertices of $G$.  An $\mathcal{L}$-\emph{coloring} of $G$ is a function $f$ so that $f(v) \in L_v$ holds for every $v \in V$ and $f(u) \neq f(v)$ whenever $uv \in E$.  We say that a graph $G$ is $k$-\emph{choosable} if an $\mathcal{L}$-coloring exists whenever $|L_v| \ge k$ holds for every $v \in V$.  Note that every $k$-choosable graph is necessarily $k$-colorable since we may take $L_v = \{1,2,\ldots,k\}$ for every $v \in V$.  For planar graphs, Thomassen \cite{cT3} proved the following theorem showing the existence of exponentially many ${\mathcal L}$-colorings when every set has size five.

\begin{theorem}[Thomassen \cite{cT3}]
  Let $G = (V,E)$ be a simple planar graph of order $n$.  If $\mathcal{L} = \{ L_v \}_{v \in V}$ is a family of sets with $|L_v| = 5$ for every $v \in V$,
  then there are at least $2^{n/9}$ $\mathcal{L}$-colorings of $G$.
\end{theorem}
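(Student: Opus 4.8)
The plan is to deduce the theorem from a stronger statement about near-triangulations with a precolored edge, proved by induction on $|V(G)|$ in the spirit of Thomassen's proof that planar graphs are $5$-choosable. First I would reduce to the case that $G$ is a plane triangulation: adding edges to a plane drawing of $G$ (keeping it simple and planar) can only decrease the number of $\mathcal{L}$-colorings, and any simple planar graph on at least three vertices extends to a triangulation; one may also discard colors so that $|L_v|=5$ for every $v$. The statement I would prove by induction is: \emph{if $G$ is a plane graph with every bounded face a triangle and outer face bounded by a cycle $C=v_1v_2\cdots v_p$, if $v_1$ and $v_2$ are precolored with distinct colors, $|L_{v_i}|\ge 3$ for $3\le i\le p$, and $|L_v|\ge 5$ for every $v\in V(G)\setminus V(C)$, then $G$ has at least $2^{(|V(G)|-|V(C)|)/9}$ $\mathcal{L}$-colorings.} This implies the theorem: take $v_1v_2v_3$ to be the outer triangle of the triangulated $G$; there are $5$ choices for $c(v_1)$ and, for each, at least $4$ for $c(v_2)$, and each of these at least $20$ precolorings extends — by the inductive statement applied with $C=v_1v_2v_3$ — in at least $2^{(n-3)/9}$ ways, so $G$ has at least $20\cdot 2^{(n-3)/9}\ge 2^{n/9}$ $\mathcal{L}$-colorings (the cases $n\le 2$ being trivial).

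In the induction the base cases $|V(G)|\le 3$ and $G=C$ are immediate: the claimed bound equals $1$, and a cycle with two adjacent precolored vertices and all other lists of size $\ge 3$ always admits an $\mathcal{L}$-coloring, obtained by coloring the path $v_3,\dots,v_p$ greedily (note $v_p$ forbids at most two colors from its list of size $\ge 3$). If $C$ has a chord $v_iv_j$, it splits the disk into near-triangulations $G_1\ni v_1,v_2$ and $G_2$ meeting exactly in the edge $v_iv_j$; by induction $G_1$ has at least $2^{(|V(G_1)|-|V(C_1)|)/9}$ colorings, each assigns distinct colors to $v_i$ and $v_j$, and each extends to $G_2$ in at least $2^{(|V(G_2)|-|V(C_2)|)/9}$ ways. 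Since $|V(G_1)|+|V(G_2)|=|V(G)|+2$ and $|V(C_1)|+|V(C_2)|=|V(C)|+2$, the product is exactly $2^{(|V(G)|-|V(C)|)/9}$, so the chord case closes with no room to spare.

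The chordless case is the heart of the argument and the step I expect to be the main obstacle. Let $v_p$ be the neighbor of $v_1$ on $C$ other than $v_2$; since every bounded face is a triangle, its neighbors in cyclic order are $v_1,z_1,\dots,z_r,v_{p-1}$ with $z_1,\dots,z_r$ interior, and $r\ge 1$ (otherwise $v_1v_{p-1}$ would be a chord, unless $G$ is a triangle). I would delete $v_p$ — so that $z_1,\dots,z_r$ join the outer cycle — reserve two colors of $L_{v_p}\setminus\{c(v_1)\}$ and delete them from the lists of $z_1,\dots,z_r$ (leaving those lists of size $\ge 3$), apply induction to the resulting near-triangulation, and finally color $v_p$, for which one of the reserved colors is always admissible. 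The problem is purely quantitative: the outer cycle of $G-v_p$ has $r$ more vertices than one would like, so induction only produces $2^{(|V(G)|-|V(C)|-r)/9}$ colorings, while restoring $v_p$ recovers at most a single factor of $2$, not the missing $2^{r/9}$. Recovering this factor is where the real work lies, and, I expect, where the constant $9$ is pinned down: one must analyze the fan around $v_p$ more finely and show that in the configurations that can actually occur one gains enough extra factors of $2$ to compensate — for instance because $v_p$ (or a vertex deleted together with it) genuinely has two admissible colors, or by ordering the recursion so that each migrating vertex $z_i$ is colored while adjacent to an already-colored vertex and so contributes a free factor of $2$. Checking that these gains always dominate the loss from migration is the one genuinely delicate point; the remainder is bookkeeping.
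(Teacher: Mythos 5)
This theorem is quoted in the paper only as background (it is Thomassen's result from \cite{cT3}); the paper contains no proof of it, so your attempt can only be measured against Thomassen's original argument. Your overall strategy is exactly his: strengthen to a near-triangulation with a precolored outer edge, lists of size at least $3$ on the outer cycle and $5$ inside, count only interior vertices in the exponent, and induct via the chord/no-chord dichotomy. Your reduction to triangulations, the derivation of the theorem from the inductive statement, the base cases, and the chord case (where the count is indeed exactly multiplicative, with no slack) are all correct.

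However, there is a genuine gap, and you have located it yourself: the chordless case is not proved. Deleting $v_p$ moves its $r\ge 1$ interior neighbors $z_1,\dots,z_r$ onto the outer cycle, so induction yields only $2^{(|V(G)|-|V(C)|-r)/9}$ colorings, and recoloring $v_p$ at the end recovers at most a factor of $2$, which does not compensate for the lost $2^{r/9}$ when $r$ is large. Your proposed remedies (``analyze the fan more finely,'' ``order the recursion so each migrating $z_i$ contributes a free factor of $2$'') are the right instincts but are not arguments: in particular, a migrating $z_i$ is colored in the subsequent recursion as an outer-cycle vertex with a list of size $3$, and nothing in your inductive hypothesis awards a factor of $2$ per such vertex, so there is no mechanism in your setup to harvest these gains. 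This is precisely where Thomassen's proof does its real work --- a case analysis near $v_p$ (distinguishing, e.g., small $r$, where the single factor of $2$ from the two reserved colors for $v_p$ dominates the loss $2^{r/9}$, from larger $r$, where a different reduction or a strengthened count is needed), and it is this analysis that forces the constant $9$. As it stands, your proposal is a correct reduction of the theorem to its hardest step, together with an accurate diagnosis of why that step is hard, but not a proof.
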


A recent concept due to Dvo\v{r}\'ak and Postle \cite{DP} provides an even more challenging setting than list-coloring.  Let $G = (V,E)$ be a graph with an arbitrary orientation, fix a positive integer $k$ and
for every edge $e \in E$ let $\sigma_e$ be a permutation of $\{1,2,\ldots,k\}$.  We define a  \emph{DP-coloring} of this system to be a function $f : V \rightarrow \{1,2,\ldots,k\}$ with the property that for
every directed edge $e = (u,v)$ we have $f(u) \neq \sigma_e( f(v) )$.  The \emph{DP-coloring number} of $G$ is the smallest $k$ so that such a coloring exists for every assignment of permutations.  Every graph
with DP-coloring number $k$ is also $k$-choosable. To see this, let $\{ L_v \}_{v \in V}$ be a family of sets with $|L_v| = k$ for every $v \in V$ and for every oriented edge $e = (u,v)$ of $G$ choose a
bijection $\sigma_e: L_u\to L_v$ such that $\sigma_e$ is the identity when restricted to $L_u \cap L_v$.  A DP-coloring for the family $\{ \sigma_e\}_{e \in E}$ then gives a list-coloring.

There is a natural group-valued specialization of DP-coloring.  Again, let $G = (V,E)$ be an oriented graph and let $\Gamma$ be an additive abelian group.
For every $e \in E$, let $\gamma_e \in \Gamma$.  Then a function $c : V \rightarrow \Gamma$ may be considered a coloring if for every $e = (u,v) \in E$ we have $c(u) \neq \gamma_e + c(v)$.
So here we are operating in the realm of DP-coloring, but using a group as the ground set, and taking permutations associated with group elements on the edges.
This group-valued DP-coloring is usually called group coloring. Its dual was investigated many years ago by Jaeger, Linial, Payan, and Tarsi~\cite{JLPT}, who gave the following definition.

\begin{definition}
  Let $G = (V,E)$ be an oriented graph and let $\Gamma$ be an abelian group.  We say that $G$ is \emph{$\Gamma$-connected} if it satisfies the following property:
  For every $f : E \rightarrow \Gamma$ there is a flow $\phi : E \rightarrow \Gamma$ satisfying $\phi(e) \neq f(e)$ for every $e \in E$.
\end{definition}

In fact, the authors showed in \cite{JLPT} that the above concept has several equivalent formulations.  Our interest here will be in counting the number of functions $\phi$ satisfying the above property, and for this purpose
all of the equivalent forms of group-connectivity operate the same.  So for simplicity we shall stay with the above definition.  Using the basic method of Jaeger et al.\ we will prove the following theorem in Section~\ref{sect:5}.

\begin{theorem}
\label{order8plus}
Let $G=(V,E)$ be an oriented 3-edge-connected graph with $\ell = |E| - |V|$ and let $\Gamma$ be an abelian group with $|\Gamma| = k \ge 6$.  For every $f : E \rightarrow \Gamma$ we have
\[
  | \{ \phi : E \rightarrow \Gamma \mid \mbox{$\phi$ is a flow and $\phi(e) \neq f(e)$ for every $e \in E$} \}|
	\ge \left\{ \begin{array}{cl}
          \frac{1}{2} (\frac{k-6}{2})^{\ell}	&	\mbox{if $k$ is odd,}	\\
          \frac{1}{2} (\frac{k-4}{2})^{\ell}	&	\mbox{if $k$ is even.}
		\end{array} \right.
\]
\end{theorem}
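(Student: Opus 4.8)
Following the method of Jaeger, Linial, Payan and Tarsi, the plan is to prove a stronger, ``non-homogeneous'' statement by induction on $|E(G)|$. For an oriented graph $G=(V,E)$, a boundary $\beta\colon V\to\Gamma$ with $\sum_{v\in V}\beta(v)=0$, and an arbitrary $f\colon E\to\Gamma$, call $\phi\colon E\to\Gamma$ a $\beta$-flow if $\sum_{e\in\delta^+(v)}\phi(e)-\sum_{e\in\delta^-(v)}\phi(e)=\beta(v)$ for every $v$, and let $N(G,\beta,f)$ be the number of $\beta$-flows avoiding $f$ everywhere. With $q=\tfrac{k-4}{2}$ for $k$ even and $q=\tfrac{k-6}{2}$ for $k$ odd, I would prove $N(G,\beta,f)\ge\tfrac12\,q^{\ell}$, where $\ell=|E|-|V|$, for every $3$-edge-connected $G$ and all admissible $\beta$ and all $f$; the theorem is the case $\beta\equiv 0$. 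The point of carrying the extra boundary $\beta$ is that, in each reduction, the flow on the part of $G$ that is removed interacts with the rest of $G$ only through the vertices at which that part was attached, and this interaction can be folded entirely into a modified boundary on the smaller graph — so the reductions stay inside the same family without any need to track shifts of $f$ along paths. The base cases are a few small $3$-edge-connected graphs (the three-edge bond, $K_4$, and the like); for a two-vertex multigraph with $\ell+2$ edges a one-line inclusion–exclusion gives $N(G,\beta,f)\ge\tfrac1k\bigl((k-1)^{\ell+2}-1\bigr)$, which dwarfs $\tfrac12 q^\ell$, and the remaining base cases are handled similarly.

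For the inductive step there are two regimes. If $G$ has an edge $e$ whose deletion leaves a $3$-edge-connected graph (equivalently, $e$ lies in no $3$-edge-cut), then $N(G,\beta,f)=\sum_{x\neq f(e)}N(G-e,\beta_x,f|_{E(G-e)})$, where $\beta_x$ is $\beta$ adjusted by $\pm x$ at the two ends of $e$; since $|E(G-e)|-|V(G-e)|=\ell-1$ and there are $k-1\ge q$ admissible values of $x$, induction gives $N(G,\beta,f)\ge(k-1)\cdot\tfrac12 q^{\ell-1}\ge\tfrac12 q^{\ell}$. Otherwise $G$ is minimally $3$-edge-connected, and such graphs are known to contain a vertex of degree $3$; at such a vertex I would perform a local reduction that again lowers $|E|-|V|$ by one while keeping $3$-edge-connectivity (deleting the vertex and re-joining a suitable pair of its neighbours, or a $Y\!-\!\Delta$-type move), express $N(G,\beta,f)$ as a sum over the few new flow values that this reduction introduces, and bound the number of admissible choices of those values from below by $q$. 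This last estimate is exactly where the constant $q$ — and the slightly worse value for odd $k$, which comes from whether $\Gamma$ has an element of order $2$ — is forced; the leading factor $\tfrac12$ is slack that absorbs the small cases.

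The main obstacle is this second regime. One has to show that a minimally $3$-edge-connected graph always admits a local reduction of the required kind: this uses the structure of minimally $3$-edge-connected graphs and is delicate in the (near-)cubic case, where deleting a degree-$3$ vertex naively creates degree-$2$ vertices, so the reconnection must be chosen with care and may involve a small neighbourhood rather than a single vertex. One then has to check that the corresponding identity for $N$ holds with the boundaries tracked correctly, and that the number of admissible new flow values is at least $q$ in every configuration — this innocuous-looking count is precisely the place where $\tfrac{k-4}{2}$ (resp.\ $\tfrac{k-6}{2}$) enters. Getting this right in all cases, together with pinning down the base cases and confirming the easy inequality $k-1\ge q$ in the first regime, is where the bulk of the argument lies.
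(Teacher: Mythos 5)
Your first regime is fine, but the whole weight of the argument falls on the second regime, and there the proposal has a genuine gap. After any reasonable preprocessing the hard case is a $3$-edge-connected \emph{cubic} graph (the paper in fact reduces to this case via vertex expansion), and in a cubic graph every edge lies in a $3$-edge-cut, so your first regime never fires: everything must be done by the local reduction at a degree-$3$ vertex. That reduction does not stay inside your family of problems. If $v$ is cubic with neighbours $a,b,c$ and you condition on the value $z\neq f(vc)$ of one incident edge, the remaining freedom at $v$ is one-dimensional and corresponds to a lifted edge $ab$; but the two constraints $\phi(va)\neq f(va)$ and $\phi(vb)\neq f(vb)$ translate into \emph{two} forbidden values for the flow on $ab$ (the second one depending on $z$ and $\beta(v)$). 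So the reduced instance has an edge with a list of two forbidden values, not one, and iterating the reduction makes these lists grow without bound. Carrying a boundary $\beta$ handles the conservation constraints, as you say, but it does nothing to control the forbidden sets — and controlling their growth is exactly the problem that Seymour's $k$-closure / $k$-base machinery (or the Jaeger--Linial--Payan--Tarsi argument) is designed to solve. The alternative of deleting $v$ outright and summing over pairs $(\phi(va),\phi(vb))$ destroys $3$-edge-connectivity in the cubic case, since $a,b,c$ drop to degree $2$.

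A symptom of the missing idea is that your scheme offers no mechanism that would produce the specific constants $\tfrac{k-4}{2}$ versus $\tfrac{k-6}{2}$: a local inclusion--exclusion at a cubic vertex never sees whether $\Gamma$ has an element of order $2$. In the paper that dichotomy arises from a global two-stage construction: one first builds at least $(k-6)^{\ell}$ (resp.\ $(k-4)^{\ell}$) flows satisfying the stronger condition $\phi(e)\notin\{f(e),f(e)+x,f(e)-x\}$ on the complement of a peripheral $2$-base $B$ (using the nested-cycle structure of the $2$-base, two new edges and at least $k-6$ resp.\ $k-4$ admissible elementary-flow multiples per step), where $x$ is an element of order $2$ when $k$ is even; the bad edges of $B$ are then repaired by adding a $\pm x$-valued flow supported on a symmetric difference of fundamental cycles, the condition $(\star)$ guaranteeing the repair breaks nothing off $B$, and the factors $2^{\ell}$ and $\tfrac12$ come from a pigeonhole over the at most $2^{\ell+1}$ subsets of $B$ needing the same repair. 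To make your induction work you would need to reinvent an equivalent of this base/closure control; as written, the step you defer (``bound the number of admissible new flow values by $q$ in every configuration'') is not merely technical but false in the form stated, because the reduced problem is no longer of the form to which the inductive hypothesis applies.
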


For $k \ge 8$ the above theorem gives us an exponential number of flows, but for $k=6,7$ it only implies the existence of one.  We believe that this represents a shortcoming of our techniques and suspect the
result holds as well when $k=6$,~$7$.

\begin{conjecture}\label{conjexp}
  There exists a fixed constant $c > 1$ so that the following holds.  For every 3-edge-connected oriented graph $G = (V,E)$ of order $n$,
  every abelian group $\Gamma$ with $|\Gamma| \ge 6$, and every $f : E \rightarrow \Gamma$, there exist at least $c^n$ flows $\phi : E \rightarrow \Gamma$ with $\phi(e) \neq f(e)$ for every $e \in E$.
\end{conjecture}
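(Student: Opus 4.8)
The plan is to prove, by induction on $|E(G)|$, a more robust statement that survives the reductions of the Jaeger--Linial--Payan--Tarsi scheme, and then specialise. Two strengthenings are needed. First, using the equivalent ``non-homogeneous'' form of group connectivity, I would fix an arbitrary boundary $\beta\colon V\to\Gamma$ with $\sum_{v\in V}\beta(v)=0$ and count $\beta$-flows, i.e.\ maps $\phi\colon E\to\Gamma$ with $\sum_{e\in\delta^+(v)}\phi(e)-\sum_{e\in\delta^-(v)}\phi(e)=\beta(v)$ for every $v$ (ordinary flows are the case $\beta=0$); this is forced on us because deleting an edge turns a flow of $G$ into a $\beta$-flow of $G-e$. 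Second, instead of a single forbidden value $f(e)$ I would give each edge $e$ a \emph{permitted} set $L_e\subseteq\Gamma$, initially of size $k-1$; its role is that suppressing a degree-two vertex replaces its two incident edges by one edge whose permitted set is a ``shifted intersection'' of the two old ones and so may lose one element. The inductive claim is then: there is a class $\mathcal C$ of bridgeless graphs containing every 3-edge-connected graph and closed under the operations below such that, for each $G\in\mathcal C$, each admissible $\beta$, and each list assignment whose sizes are not too small, the number of $\beta$-flows $\phi$ with $\phi(e)\in L_e$ for all $e$ is at least $\tfrac12(\tfrac{k-6}{2})^{\ell}$ when $k$ is odd and at least $\tfrac12(\tfrac{k-4}{2})^{\ell}$ when $k$ is even, where $\ell=|E(G)|-|V(G)|$. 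Taking $\beta=0$ and all $|L_e|=k-1$ then gives the theorem.

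The reductions are the usual moves of that method, each strictly decreasing $|E(G)|$ (and $\ell$). If $e_1,e_2$ are parallel edges, contract $e_1$ and delete the loop that $e_2$ becomes; since $\phi(e_1)+\phi(e_2)$ is then determined while the split between $e_1$ and $e_2$ is free, each valid $\beta$-flow of the reduced graph lifts to at least $k-2$ valid $\beta$-flows of $G$, comfortably above $\tfrac{k-4}{2}\ge\tfrac{k-6}{2}$, so this step more than pays for itself. If some edge $e$ can be deleted with $G-e$ still in $\mathcal C$, do so: the count for $G$ is the sum over the $\ge k-1$ permitted values of $\phi(e)$ of the corresponding $\beta'$-flow counts of $G-e$, and $k-1$ again exceeds the needed per-step factor. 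When deleting $e=xy$ creates a degree-two vertex --- the case unavoidable at a cubic vertex --- suppress the at most two such vertices, paying at most one unit off at most two permitted sets. Finally, if $G$ has a nontrivial 2- or 3-edge-cut, split $G$ along it into two smaller members of $\mathcal C$; here $\ell$ is (almost) additive across the cut, and one multiplies the two counts after checking that the $\ge k-1$ permitted values on the (at most three) cut edges leave room to match the two sides. The base cases are the members of $\mathcal C$ of bounded size, where $\ell$ is bounded: there the claimed bound is very weak, and it follows either from a direct inclusion--exclusion estimate on the coset of $\beta$-flows or, when the bound is below $1$, from the Jaeger et al.\ theorem that such graphs are $\Gamma$-connected. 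This is also where the odd/even dichotomy first appears, through the presence of an element of order two in $\Gamma$ exactly when $k$ is even.

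The main obstacle --- and the reason the exponents carry $k-6$ and $k-4$ rather than the far larger factors the individual moves suggest --- is bounding how small the permitted sets can become. Under the reductions the quantity $\sum_{e}(k-|L_e|)$ is only non-increasing, not controlled edge-by-edge, so a careless sequence of suppressions could push some $|L_e|$ down to $0$ and destroy the estimate. The technical core of the proof is therefore a structural lemma --- and this is where 3-edge-connectivity really enters --- showing that one can always perform the reductions in an order that keeps every $|L_e|$ at least $k-3$ (say); informally, one should always prefer to delete an edge whose list has already been shrunk, so that its deficiency is discarded rather than propagated into the next merge. Granting such control, every deletion contributes a factor at least $k-3$, every suppression costs at most one unit off one list, and a short computation shows the net loss per unit of $\ell$ is at most $\tfrac{k-6}{2}$ for odd $k$; when $k$ is even, an involution $\tau\in\Gamma$ can be used to repair one further unit of loss (by swapping a flow value $a$ for $a+\tau$ on an offending edge), which is exactly what improves this to $\tfrac{k-4}{2}$. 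The leading $\tfrac12$ is simply the value carried up from the base case and is lost only once. Proving the structural lemma, doing the bookkeeping of list sizes, and verifying that all four reduction types stay inside $\mathcal C$ are where the real work lies.
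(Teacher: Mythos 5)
This statement is an open conjecture of the paper, not a theorem: the authors prove it only for $|\Gamma|\ge 8$ (Theorem~\ref{order8plus}), obtain a sub-exponential bound $2^{\sqrt{\ell}/\log\ell}$ for $\Z_6$ (Theorem~\ref{order6}), and explicitly state that for $\Z_7$ they have no interesting lower bound. Your proposal does not close this gap, and in fact cannot, because the quantity you set out to prove by induction is exactly the bound of Theorem~\ref{order8plus}: $\tfrac12\bigl(\tfrac{k-6}{2}\bigr)^{\ell}$ for odd $k$ and $\tfrac12\bigl(\tfrac{k-4}{2}\bigr)^{\ell}$ for even $k$. For $k=7$ this is $\tfrac12\cdot 2^{-\ell}$, and for $k=6$ it is $\tfrac12$; neither is of the form $c^n$ with $c>1$, and neither even certifies the existence of a single flow. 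So even granting every step of your induction, the conclusion covers only $|\Gamma|\ge 8$, i.e., precisely the part of the conjecture the paper already establishes, and says nothing about the two groups ($\Z_6$ and $\Z_7$) for which the conjecture is actually open. The sentence ``Taking $\beta=0$ and all $|L_e|=k-1$ then gives the theorem'' conflates the conjecture with Theorem~\ref{order8plus}.

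Two further remarks. First, even restricted to $k\ge 8$, your argument is not a proof: the ``structural lemma'' that keeps every permitted set of size at least $k-3$ throughout the reduction sequence is declared to be the technical core and is left entirely unproven, and the per-step accounting (how many units of $\ell$ each deletion, suppression, and cut-splitting consumes versus the multiplicative factor it earns) is not carried out. Second, the paper's actual proof of the $k\ge 8$ bound takes a different and shorter route: it reduces to a 3-edge-connected cubic graph (Lemma~\ref{reduce2cubic}), takes Seymour's partition of the edge set into a spanning tree and a peripheral 2-base, builds $(k-6)^{\ell}$ (resp.\ $(k-4)^{\ell}$) flows by choosing elementary flows along the cycles of the 2-base closure sequence so as to avoid $f(e)$, $f(e)\pm x$ off the base, and then repairs the at most $\ell+1$ base edges with a single correction flow $\mu_S$, losing only the factor $2^{\ell+1}$. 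If you want to make progress on the conjecture itself, the open problem is to beat these bounds for $\Z_6$ and $\Z_7$; for $\Z_6$ the paper's best tool is the count of peripheral 1-base/2-base decompositions (Lemma~\ref{manydecomps}), which yields only $2^{\sqrt{\ell}/\log\ell}$.
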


Theorem~\ref{order8plus} shows the above conjecture is true for all abelian groups except for $\Z_6$ and $\Z_7$.  In the case of $\Z_7$ we have no interesting lower bound.  In the case of $\Z_6$, we have the following result showing a super-polynomial lower bound, which has the flavor of a similar coloring result by Asadi et al.~\cite{ADPT}. Throughout we use $\log$ to denote the logarithm base~$2$.

\begin{theorem}
\label{order6}
  Let $G=(V,E)$ be an oriented 3-edge-connected graph with $\ell = |E| - |V| \ge 11$ and let $f : E \rightarrow \Z_6$.
  There exist at least $2^{\sqrt{\ell} / \log \ell}$ flows $\phi: E \rightarrow \Z_6$ with the property that $\phi(e) \neq f(e)$ for every $e \in E$.
\end{theorem}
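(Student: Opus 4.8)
The plan is to combine the $\Z_6 \cong \Z_2 \times \Z_3$ decomposition with a divide-and-conquer argument driven by edge-connectivity reductions. Write $\phi = (\phi_2, \phi_3)$ where $\phi_2$ is a $\Z_2$-flow and $\phi_3$ is a $\Z_3$-flow, and similarly split $f = (f_2, f_3)$. The condition $\phi(e) \neq f(e)$ says that for each edge we avoid *one* forbidden pair in $\Z_2 \times \Z_3$; the key observation (already implicit in the Jaeger--Linial--Payan--Tarsi machinery and in Theorem~\ref{ourflowthm}) is that if $\phi_2(e) \neq f_2(e)$ then the constraint on $\phi_3(e)$ is automatically satisfied, while if $\phi_2(e) = f_2(e)$ we must have $\phi_3(e) \neq f_3(e)$, i.e.\ $\phi_3(e)$ is restricted to a $2$-element subset of $\Z_3$ on those edges. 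So after fixing a suitable $\Z_2$-flow $\phi_2$, the edges where $\phi_2$ agrees with $f_2$ — call this set $A$ — carry a genuine $\Z_3$-group-connectivity-type constraint, whereas edges outside $A$ are free. Since $\Z_3$-connectivity fails for small graphs, the trick is to make $A$ small and well-structured.

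First I would use the fact that a $3$-edge-connected graph has many $\Z_2$-flows with controllable support to arrange that $A$ is contained in a sparse subgraph. Concretely: by Theorem~\ref{ourflowthm}(2) applied after a suitable reduction (or directly by Nash-Williams/Tutte tree-packing plus the $2$-edge-connected structure), $G$ has at least $2^{\Omega(\ell)}$ nowhere-zero $\Z_2\times\Z_2$-flows, and from these one extracts many $\Z_2$-flows $\psi$; averaging, some choice of $\phi_2$ has $|A| = |\{e : \phi_2(e)=f_2(e)\}|$ small — but "small" here is only down to a constant fraction, which is not yet enough. The real gain has to come from iterating: having restricted attention to the subgraph $G[A]$ (roughly speaking, after contracting edges where $\phi_3$ is already unconstrained), we want to recurse. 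Each level of recursion should reduce the "effective" $\ell$-parameter by a controlled amount while multiplying the count by a factor $\ge 2$, and after about $\sqrt{\ell}$ levels we have accumulated a factor $2^{\sqrt\ell}$; the $1/\log\ell$ loss in the exponent is the slack we pay for the fact that each reduction step might need to break the graph into pieces and lose a logarithmic overhead in bookkeeping (this is exactly the flavor of the Asadi--Devos--Postle--Thomassen coloring bound~\cite{ADPT} the authors point to).

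The cleanest implementation I would attempt: induct on $\ell$. If $G$ has a vertex of degree $3$ or a nontrivial $3$-edge-cut, split off / reduce there, handling the (bounded) number of configurations of $\phi$ restricted to the reducing piece by brute force and recursing on the smaller graph; the $3$-edge-connectivity is preserved by standard reductions (suppressing degree-$2$ vertices, splitting a $3$-cut). The base case $\ell \le 10$ is excluded by hypothesis and in the inductive step one shows that either (i) some reduction decreases $\ell$ by $1$ while the number of valid extensions of $\phi$ to the removed part is $\ge 1$ always and $\ge 2$ "often enough," or (ii) $G$ is essentially $4$-edge-connected, in which case one invokes the stronger $\Z_2\times\Z_2$-count of Theorem~\ref{ourflowthm}(2) directly (noting a nowhere-zero $\Z_2\times\Z_2$-flow that avoids $f$ componentwise gives the needed $\Z_6$-flow when $f_2, f_3$ are handled together — this needs $|\Gamma|=6$ to have enough room, which is the whole point of the $\ell\ge 11$ threshold). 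Carrying out the recursion, the multiplicative factor of $2$ per successful reduction and the need to group $\approx\log\ell$ reductions together before guaranteeing a doubling yields the bound $2^{\sqrt\ell/\log\ell}$.

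The main obstacle, which I expect to be genuinely delicate, is exactly the interplay between the two prime components: when we fix $\phi_2$ to control $A$, the residual $\Z_3$-problem on $G[A]$ is not a clean $\Z_3$-connectivity instance but one where some edges have a full free $\Z_3$-choice and others a restricted $2$-set, and $G[A]$ need not be $3$-edge-connected — so the inductive hypothesis does not apply to it directly. Making the recursion self-sustaining (so that at each level we are again looking at a $3$-edge-connected graph with a group-connectivity-type constraint, possibly over a product group, with a controlled parameter) is where the bulk of the technical work lies, and it is presumably why the authors only get a super-polynomial rather than exponential bound and why they phrase Conjecture~\ref{conjexp} as open for $\Z_6$. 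I would therefore expect the actual argument to track a slightly more refined invariant than $\ell$ — perhaps $\ell$ together with the number of "constrained" edges — and to choose the recursion depth $\approx \sqrt\ell$ precisely to balance the exponential gain against the polynomial-in-$\ell$ degradation of that invariant at each step.
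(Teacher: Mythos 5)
There is a genuine gap: what you have written is a plan whose central difficulty you yourself flag as unresolved (``the inductive hypothesis does not apply to it directly\dots\ making the recursion self-sustaining\dots\ is where the bulk of the technical work lies''), and no version of that recursion is actually carried out. Beyond the missing core, two specific steps would fail. First, you lean on Theorem~\ref{ourflowthm}(2), but that result requires 4-edge-connectivity (not available here) and, more importantly, it counts \emph{nowhere-zero} $\Z_2\times\Z_2$-flows; in the group-connectivity setting $f$ is an arbitrary (adversarial) function into $\Z_6\cong\Z_2\times\Z_3$, so a nowhere-zero $\Z_2\times\Z_2$-flow ``avoiding $f$ componentwise'' is not a meaningful object and the permutation/averaging symmetries that drive the nowhere-zero counts do not transfer. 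Second, the claim that $\ell\ge 11$ is ``the whole point'' of having room in $\Z_6$ is off: that threshold only serves a final numerical inequality; the real reason $\Z_6$ is hard (and why Conjecture~\ref{conjexp} stays open for it) is that a $1$-base/$2$-base decomposition leaves zero slack on each edge, so no local perturbation argument yields a second solution.

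The paper's actual argument uses machinery entirely absent from your proposal. After reducing to a $3$-edge-connected cubic graph $G'$ on $2\ell$ vertices (Lemma~\ref{reduce2cubic}), it splits into two cases: if $G'$ has a peripheral cycle of length $q\ge\frac32\sqrt\ell/\log\ell$, a single spanning-tree/$2$-base decomposition containing that cycle already yields $2^{2q/3}$ flows by perturbing the $\Z_3$-flow around the cycle and toggling $\Z_2$-flows on fundamental cycles. Otherwise all peripheral cycles are short, and Lemma~\ref{manydecomps} (built on the peripheral-path Lemmas~\ref{getapath1}--\ref{nottoolong}) produces $N\approx 2^{\frac23\sqrt\ell\log\ell}$ \emph{distinct} decompositions $\{T_i,B_i\}$ into a spanning tree and a $2$-base; for each one a $\Z_3$-flow $\phi_i$ avoids $f_2$ off $B_i$. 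Either some $\phi_i$ disagrees with $f_2$ on many edges of $B_i$ (again giving many flows via $\Z_2$-perturbations), or every $\phi_i$ nearly determines $B_i$, whence a binomial-coefficient count shows at least $2^{\sqrt\ell/\log\ell}$ of the $\phi_i$ are pairwise distinct; each is then paired with a $\Z_2$-flow avoiding $f_1$ on $B_i$. The multiplicity thus comes from counting distinct base decompositions, not from a divide-and-conquer recursion on $\ell$. Your $\Z_2\times\Z_3$ splitting and the observation that only edges where one coordinate agrees with $f$ constrain the other coordinate are correct and are indeed used, but they are the easy part.
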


The rest of this paper is organized as follows.  In the  next section we show that many of our counting problems can be reduced to cubic graphs. In the third section we establish Theorem~\ref{ourflowthm} on the
existence of many nowhere-zero flows.  Section~\ref{sec:4} introduces Seymour's concept of a $k$-base and then this is used in Section~5 to prove Theorem~\ref{order8plus}.  Sections~\ref{sec:6} and~\ref{sec:7} develop the techniques required
to find many decompositions of a 3-connected cubic graph into a 1-base and a 2-base, and then in Section\ref{sec:8} this is exploited to prove Theorem~\ref{order6}. The results about 1-base / 2-base decompositions (Lemmas \ref{lem:7.2} and \ref{manydecomps}) may be of independent interest.

%%%%%%%%%%%%%%%%%%%%%%%%%%%%%%%%%%%%%%%%%%%%%%%%%%%%%%%%%%%%%%%%%%%%%%%%%%%%%%%%%%%%%%%%%%
\section{Reduction to cubic graphs}
\label{sec:2}

As is common in the world of nowhere-zero flows, certain basic operations will reduce some of our problems to the setting of cubic graphs.  If $G = (V,E)$ is a graph and $X, Y \subseteq V$ are disjoint we let $E(X,Y) = \{ xy \in E \mid \mbox{$x \in X$ and $y \in Y$} \}$.  For $z \in V$ with $z \not\in Y$ we let $E(z,Y) = E( \{z\}, Y)$.

\begin{lemma}
\label{reduce2cubic}
  Let $G = (V,E)$ be a 3-edge-connected graph with $|E| - |V| = \ell$. Then there exists a 3-edge-connected cubic graph $G'$ with $|V(G')| = 2 \ell$ so that $G$ can be obtained from $G'$ by contracting the edges of a forest.
\end{lemma}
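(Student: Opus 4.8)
The plan is to repeatedly "split" high-degree vertices and "expand" low-degree vertices until every vertex is cubic, while maintaining 3-edge-connectivity and controlling the vertex count. First I would handle vertices of degree $2$: since $G$ is 3-edge-connected it has minimum degree at least $3$, so in fact there are no such vertices and we may assume $\delta(G) \ge 3$. (If loops are present at a vertex $v$, each loop contributes $2$ to $\deg(v)$ but is irrelevant to edge-connectivity; I would note that a 3-edge-connected graph on at least two vertices has no loops, and if $|V|=1$ the statement is a trivial check, so we may assume $G$ is loopless with $\delta(G)\ge 3$.)

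The main step is a \emph{vertex split}: given a vertex $v$ with $\deg(v) = d \ge 4$, partition $\delta(v)$ into two sets $A$ and $B$ with $|A|, |B| \ge 2$, introduce a new vertex $v'$, reattach the edges of $B$ to $v'$ instead of $v$, and add a new edge $vv'$. This replaces one vertex of degree $d$ by two vertices of degrees $|A|+1$ and $|B|+1$, each strictly smaller than $d$ (since $|A|,|B|\le d-2$), and it increases both $|V|$ and $|E|$ by exactly $1$, so $|E|-|V|=\ell$ is preserved. The key point is that the split can be performed so as to preserve 3-edge-connectivity: by a standard argument (Mader-type / the fact that in a 3-edge-connected graph the "3-cuts through $v$" have limited structure), for $d \ge 4$ there is always a choice of the partition $\{A,B\}$ for which no edge cut of size $\le 2$ is created. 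I would cite or reprove this: an edge cut of size $\le 2$ in the new graph separating the two sides and not using $vv'$ twice would pull back to an edge cut of size $\le 2$ in $G$ unless it "uses" the split in a way that forces $|A| \le 1$ or $|B|\le 1$, contradicting our choice; the remaining case, where both $v$ and $v'$ lie on the same side, is immediate since contracting $vv'$ recovers $G$. Iterating the split, every vertex eventually has degree $3$; the process terminates because $\sum_v (\deg(v)-3)$ strictly decreases.

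Once the graph is cubic, call it $G'$. Since each split increased $|V|$ by $1$ and the original $G$ had $|E|-|V| = \ell$, and cubic means $|E(G')| = \tfrac{3}{2}|V(G')|$, we get $\tfrac{3}{2}|V(G')| - |V(G')| = \ell$, hence $|V(G')| = 2\ell$ exactly. To recover $G$ from $G'$: the inverse of a split is the contraction of the new edge $vv'$, so $G$ is obtained from $G'$ by contracting the set $S$ of all edges introduced during the splits. It remains to check $S$ induces a forest: the edges of $S$ form a disjoint union of the "split trees" produced at the original vertices of $G$ — each time we split a vertex we attach one new edge joining the two pieces, so the subgraph spanned by the split-edges descending from a single original vertex $v$ is a tree on the vertices that $v$ expanded into, and trees over distinct original vertices are vertex-disjoint. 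Hence $S$ is a forest and contracting it yields $G$, and $G'$ is 3-edge-connected by the invariant maintained throughout.

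The main obstacle is the 3-edge-connectivity-preserving split: one must argue carefully that for $\deg(v)\ge 4$ a valid partition $\{A,B\}$ always exists. The clean way is to observe that the "near-minimum" cuts through $v$ (cuts of size $\le 2$ once $vv'$ is removed) are nested or crossing in a controlled way; since $G$ is 3-edge-connected, removing any two edges leaves it connected, and a short case analysis on how a putative bad cut in $G'$ interacts with the edge $vv'$ and the bipartition $A \cup B = \delta_G(v)$ shows we can always pick $A$ to be, e.g., a single "troublesome" pair together with enough additional edges to kill all size-$\le 2$ cuts — or, more robustly, invoke the classical fact (Fleischner/Mader) that a vertex of degree $\ge 4$ in a $3$-edge-connected graph admits a "legal" splitting-off of a suitable pair reattached to a new vertex. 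I would present the self-contained version since the groups involved are small and the argument is short.
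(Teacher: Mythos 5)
Your overall strategy coincides with the paper's: repeatedly split a vertex of degree at least $4$ into two vertices joined by a new edge, observe that $\ell=|E|-|V|$ is invariant and that $\sum_v(\deg(v)-3)$ decreases, conclude $|V(G')|=2\ell$ from $|E(G')|=\tfrac32|V(G')|$, and note that the new edges form a tree at each original vertex, hence a forest. All of that bookkeeping is fine. The genuine gap is in the one step that carries the content of the lemma: the existence of a split preserving $3$-edge-connectivity. Your stated reduction --- that a cut of size at most $2$ in the split graph ``forces $|A|\le 1$ or $|B|\le 1$'' --- is false. Concretely, suppose $G-v$ is connected but has a bridge $b$ separating it into $X$ and $Y$ with exactly two edges from $v$ to $X$ (consistent with $3$-edge-connectivity, since $\delta_G(X)=E(v,X)\cup\{b\}$ has size $3$). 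Taking $B=E(v,X)$, so $|A|,|B|\ge 2$, the split creates the $2$-edge-cut $\{b,\,vv'\}$ separating $X\cup\{v'\}$ from the rest. So cardinality conditions on the partition do not suffice; the partition must be chosen with respect to the cut structure of $G-v$. This is exactly what the paper does: it moves precisely two edges $e,e'$ to the new vertex and chooses them by cases --- if $G-v$ is disconnected, take $e,e'$ into different components (each component receives at least $3$ edges from $v$); if $G-v$ is $2$-edge-connected, any choice works; otherwise take a minimal $X\subseteq V(G-v)$ with exactly one edge to its complement, note that $v$ sends at least two edges to each side, and take $e,e'$ on opposite sides.

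Your fallback appeal to Mader/Fleischner splitting-off is also not quite the right tool: those results concern replacing a pair of edges $uv,vw$ at $v$ by a single edge $uw$ (reducing the degree of $v$ by two), not the inverse-contraction operation needed here, so citing them would not close the gap without further work. A minor additional point: the paper explicitly allows loops and its expansion step says ``in case of a loop at $v$, change just one end''; your claim that a $3$-edge-connected graph on at least two vertices has no loops is false under the paper's conventions, since a loop never lies in any edge cut separating two vertex sets.
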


\begin{proof}
  We proceed by induction on $S = \sum_{v \in V} \left( \mathrm{deg}(v) - 3 \right)$.  In the base case this parameter is 0 and the result holds
  trivially by setting $G'  = G$.  For the inductive step we may choose $v \in V$ with $\mathrm{deg}(v) > 3$.  For $e,e' \in \delta(v)$
  with $e \neq e'$ we may form a new graph $G'$ from $G$ by adding a new vertex $v'$, changing $e$ and $e'$ to have $v'$ as one end instead
  of $v$ (in case of a loop at $v$, change just one end) and then adding a new edge $f$ with ends $v,v'$.  We say that  $G'$ is obtained
  from $G$ by \emph{expanding relative to} $e,e'$.  Note that we may return from $G'$ to $G$ by contracting the newly added edge $f$.
  If we can construct a new graph $G'$ by expanding so that $G'$ is still $3$-edge-connected, then the result follows by applying
  induction to $G'$, so it suffices to prove this. (Note that we have added one vertex and one edge, so $\ell$ stays the same, while the sum~$S$ decreases.)

  If $G - v$ is disconnected, there must be at least $3$ edges between $v$ and each component of $G - v$.  Now choosing $e,e' \in \delta(v)$
  so that $e$ and $e'$ have ends in different components of $G-v$ and expanding relative to $e,e'$ preserves $3$-edge-connectivity.  So we
  may assume $G-v$ is connected.  If $G-v$ is 2-edge-connected, then any expansion suffices, so we may assume otherwise and choose a
  minimal nonempty set $X \subseteq V(G-v)$ so that there is just one edge between $X$ and $Y = V(G-v) \setminus X$.  Since $G$ is
  3-edge-connected, there must be at least two edges between $v$ and $X$ and at least two between $v$ and $Y$.  Choose $e \in E(v,X)$ and
  $e' \in E(v,Y)$ and then expanding relative to $e,e'$ gives us a 3-edge-connected graph $G'$ and this completes the proof.
\end{proof}

\begin{observation}
\label{forestdiff}
  Let $G = (V,E)$ be an oriented graph, let $F \subseteq E$ and assume that $(V,F)$ is an (oriented) forest.  Let $\Gamma$ be an abelian group, and let $\phi_1, \phi_2 : E(G) \rightarrow \Gamma$ be flows.  If $\phi_1(e) = \phi_2(e)$ holds for every $e \in E \setminus F$, then $\phi_1 = \phi_2$.
\end{observation}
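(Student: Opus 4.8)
The plan is to reduce the statement to one about a single flow. Set $\psi = \phi_1 - \phi_2$. Since the conservation condition at each vertex is linear over $\Gamma$, $\psi$ is again a $\Gamma$-flow, and by hypothesis $\psi(e) = 0$ for every $e \in E \setminus F$, so $\supp(\psi) \subseteq F$. Thus it suffices to show: if $\psi$ is a $\Gamma$-flow with $\supp(\psi) \subseteq F$ and $(V,F)$ is a forest, then $\psi$ is identically zero.

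The key step is the standard fact that the support of a flow cannot contain a bridge of the subgraph it spans. Suppose for contradiction $\supp(\psi) \neq \emptyset$ and pick $e = uv \in \supp(\psi)$. I would argue that $e$ lies on a cycle of $(V, \supp(\psi))$: otherwise $e$ is a bridge there, and letting $S$ be the vertex set of the side of this bridge containing $u$ (a union of components of $(V,\supp(\psi)) - e$), summing the conservation law for $\psi$ over all $v \in S$ makes every edge internal to $S$ cancel, while every edge of the cut $E(S, V \setminus S)$ other than $e$ lies outside $\supp(\psi)$ and contributes $0$; this leaves $\pm \psi(e) = 0$, contradicting $e \in \supp(\psi)$. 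Hence every edge of $\supp(\psi)$ lies on a cycle contained in $\supp(\psi) \subseteq F$ — but $(V,F)$ is a forest and contains no cycle. This contradiction gives $\supp(\psi) = \emptyset$, i.e. $\phi_1 = \phi_2$.

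A more hands-on alternative, avoiding supports, is induction on $|F|$. If $F = \emptyset$ there is nothing to prove. Otherwise the forest $(V,F)$ has a leaf $v$, incident in $(V,F)$ to a unique edge $e$; every other edge of $G$ incident with $v$ lies in $E \setminus F$ (and loops at $v$, if any, contribute equally to $\delta^+(v)$ and $\delta^-(v)$, so they drop out). Subtracting the conservation equations of $\phi_1$ and $\phi_2$ at $v$ yields $\pm(\phi_1(e) - \phi_2(e)) = 0$, so $\phi_1$ and $\phi_2$ also agree on $e$; now apply the inductive hypothesis to $F \setminus \{e\}$.

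I do not anticipate a genuine obstacle. The only two points needing a careful sentence are that the difference of two flows is again a flow, and that the cut (or leaf) argument is phrased so as to correctly accommodate loops and parallel edges, which the paper explicitly permits.
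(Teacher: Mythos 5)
Your first argument is exactly the paper's proof: the paper also takes $\phi_1-\phi_2$, observes it is a flow, and concludes it vanishes because the support of a flow is a union of cycles and hence cannot lie in a forest; you merely supply the cut-summation detail behind that standard fact (and a correct leaf-stripping alternative). Both versions are correct.
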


\begin{proof}
  The function $\phi_1 - \phi_2$ is also a flow and must be identically 0 since the support of every flow is a union of (edge-sets of) cycles.
\end{proof}

%%%%%%%%%%%%%%%%%%%%%%%%%%%%%%%%%%%%%%%%%%%%%%%%%%%%%%%%%%%%%%%%%%%%%%%%%%%%%%%%%%%%%%%%%%
\section{Many nowhere-zero flows}
\label{sect:3}

In this section we prove Theorem~\ref{ourflowthm} concerning the existence of many nowhere-zero flows.

\begin{lemma}\label{supportflow}
  Let $G = (V,E)$ be a graph with $|V| = n$ and $|E|=m$.  Let $\phi : E \rightarrow \Z_2$ and $\psi : E \rightarrow \Z_k$ be flows with $\supp(\phi) \cup \supp(\psi) = E$ and let $t = | \supp(\phi)|$.
  Then $G$ has at least $2^{m-n-t/k}$ nowhere-zero $\Z_2 \times \Z_k$-flows.
\end{lemma}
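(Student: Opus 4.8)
The plan is to exhibit many nowhere-zero $\Z_2\times\Z_k$-flows as pairs $(\alpha,\beta)$ with $\alpha$ a $\Z_2$-flow and $\beta$ a $\Z_k$-flow, built by perturbing the given flows $\phi$ and $\psi$. First I would decompose $\supp(\phi)$ into edge-disjoint cycles $C_1,\dots,C_j$ (this is possible because $\supp(\phi)$ is an even subgraph, since $\phi$ is a $\Z_2$-flow) with $|C_i|=\ell_i$, so that $\sum_{i=1}^{j}\ell_i=t$; orienting each $C_i$ cyclically produces a $\Z_k$-flow $\rho_i$ that takes values in $\{1,-1\}$ on $C_i$ and $0$ off $C_i$. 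For each vector $c=(c_1,\dots,c_j)\in\Z_k^{\,j}$ set $\beta_c=\psi+\sum_{i}c_i\rho_i$, which is again a $\Z_k$-flow. Since $\supp(\phi)\cup\supp(\psi)=E$, every edge $e\notin\supp(\phi)$ lies in $\supp(\psi)$, and as the $\rho_i$ vanish off $\supp(\phi)$ we get $\beta_c(e)=\psi(e)\neq 0$ on such edges. Writing $Z_c=\{e\in\supp(\phi)\colon \beta_c(e)=0\}$, one checks that $(\alpha,\beta_c)$ is nowhere-zero for every $\Z_2$-flow $\alpha$ with $\alpha(e)=1$ for all $e\in Z_c$: on an edge $e\notin\supp(\phi)$ the value $\beta_c(e)$ is nonzero, and on an edge $e\in\supp(\phi)$ either $\beta_c(e)\neq 0$ or $e\in Z_c$ and then $\alpha(e)\neq 0$.

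Next I would count these pairs. Distinct vectors $c$ give distinct flows $\beta_c$ (the $\rho_i$ have disjoint nonempty supports and unit values), so it suffices, for each fixed $c$, to count the $\Z_2$-flows equal to $1$ throughout $Z_c$. Such flows form a coset of the subspace $\{\alpha\colon \alpha$ is a $\Z_2$-flow of $G$ and $\alpha|_{Z_c}\equiv 0\}$, and this coset is nonempty because $\phi$ itself lies in it (as $Z_c\subseteq\supp(\phi)$). Since the space of $\Z_2$-flows of $G$ has dimension at least $m-n$ and the subspace is cut out by at most $|Z_c|$ coordinate conditions, there are at least $2^{\,m-n-|Z_c|}$ such flows $\alpha$. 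Summing over $c$ yields the lower bound $2^{\,m-n}\sum_{c\in\Z_k^{\,j}}2^{-|Z_c|}$ for the number of nowhere-zero $\Z_2\times\Z_k$-flows.

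The remaining step, which I expect to be the crux, is to show $\sum_{c\in\Z_k^{\,j}}2^{-|Z_c|}\ge 2^{-t/k}$. The point is that the perturbation decouples the cycles: for $e\in C_i$ we have $\beta_c(e)=\psi(e)+c_i\rho_i(e)$, which vanishes for exactly one value of $c_i$ because $\rho_i(e)$ is a unit of $\Z_k$; hence $|Z_c|=\sum_{i=1}^{j}|\{e\in C_i\colon c_i=g_i(e)\}|$ for suitable functions $g_i\colon C_i\to\Z_k$. Consequently the sum factors as $\prod_{i=1}^{j}\sum_{v\in\Z_k}2^{-|g_i^{-1}(v)|}$, and since the smallest fiber of $g_i$ has size at most $\ell_i/k$, each factor is at least $2^{-\ell_i/k}$; multiplying gives $2^{-\sum_i \ell_i/k}=2^{-t/k}$, and combining with the previous paragraph completes the proof. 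This factorization-plus-averaging is exactly what turns "one bad group element per edge of each cycle of $\supp(\phi)$'' into a clean loss of $t/k$ in the exponent; by contrast, fixing either $\phi$ or $\psi$ and perturbing only the other coordinate is too wasteful to reach this bound.
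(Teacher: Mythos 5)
Your proof is correct and follows essentially the same route as the paper: decompose $\supp(\phi)$ into edge-disjoint cycles, perturb $\psi$ by elementary $\Z_k$-flows on those cycles, and then count $\Z_2$-flows that equal $1$ on the residual zero set via a linear-algebra/coset argument. The only cosmetic difference is that the paper fixes the single best multiple $c_i$ on each cycle (losing at most $\ell_i/k$ edges by pigeonhole) and counts $\Z_2$-flows supported inside $\supp(\psi')$, whereas you sum over all choices of $c$ and bound each factor of the resulting product by its largest term --- which is the same pigeonhole.
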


\begin{proof}
The support of $\phi$ may be expressed as a disjoint union $\sqcup_{i=1}^k C_i$ where each $C_i$ is the edge-set of a cycle.
Let us fix an arbitrary orientation of $G$. (In the sequel we will no longer be mentioning that we have an implicit chosen orientation of the graph whenever we speak of flows in the graph.)
For every $1 \le i \le k$ there is a $\Z_k$-flow $\rho_i$ of $G$ with support $C_i$ that has values $\pm 1$ on every edge in $C_i$.
By adding a suitable multiple of $\rho_i$ to $\psi$ we may assume that at most $\frac{1}{k} |E(C_i)|$ edges of $C_i$ are not in the support of $\psi$.
After applying this operation to each $C_i$, the resulting $\Z_k$-flow $\psi'$ will still have $\supp(\phi) \cup \supp(\psi') = E$ but will additionally satisfy $|\supp(\psi')| \ge m - t/k$.

Let $E' = \supp(\psi')$, let $G' = (V,E')$ and note that the dimension of the cycle space of $G'$ is at least $|E'| - |V| \ge m - n - t/k$.   It follows that the number of $\Z_2$-flows of $G$ supported on
a subset of $E'$ is at least $2^{m-n - t/k}$.  If we take any such $\Z_2$-flow, say $\eta$, the mapping from $E$ to $\Z_2 \times \Z_k$ given by $e \mapsto (\phi(e) + \eta(e), \psi'(e) )$ will be a nowhere-zero
$\Z_2 \times \Z_k$-flow of $G$ and this gives the desired count.
\end{proof}

With this lemma in hand we are ready to prove Theorem~\ref{ourflowthm}.  The first part of this is given by Theorem~\ref{flowthm1} and the second by Theorem~\ref{flowthm2}.

\begin{theorem}
\label{flowthm1}
Every oriented 2-edge-connected graph with $n$ vertices and $m$ edges has at least $2^{(m-n)/3}$ nowhere-zero $\Z_2 \times \Z_3$-flows.
\end{theorem}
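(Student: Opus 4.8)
The plan is to derive Theorem~\ref{flowthm1} from Lemma~\ref{supportflow} applied with $k=3$. By that lemma it suffices to produce a $\Z_2$-flow $\phi$ and a $\Z_3$-flow $\psi$ of $G$ with $\supp\phi\cup\supp\psi=E$ and $t:=|\supp\phi|\le 2(m-n)$: since $2$-edge-connectivity forces $m\ge n$, the lemma then yields at least $2^{\,m-n-t/3}\ge 2^{\,m-n-\frac23(m-n)}=2^{(m-n)/3}$ nowhere-zero $\Z_2\times\Z_3$-flows. Note that a pair $(\phi,\psi)$ with $\supp\phi\cup\supp\psi=E$ is precisely a nowhere-zero $\Z_2\times\Z_3$-flow, so the mere \emph{existence} of one is exactly Seymour's theorem (Theorem~\ref{classicflow}(1)) together with $\Z_6\cong\Z_2\times\Z_3$; the only extra content needed is the bound $t\le 2(m-n)$ on the size of the $\Z_2$-part.

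To obtain that bound cheaply I would first reduce to cubic graphs. Suppressing a vertex of degree $2$ preserves $2$-edge-connectivity and the value of $m-n$ and induces a bijection between nowhere-zero $\Z_2\times\Z_3$-flows, so we may assume $G$ has minimum degree at least $3$ (if $G$ is a cycle then $m-n=0$ and the statement is immediate). Then, arguing as in the proof of Lemma~\ref{reduce2cubic}---now only requiring that a suitable vertex split preserve $2$-edge-connectivity---we obtain a cubic $2$-edge-connected graph $G'$ with $|E(G')|-|V(G')|=m-n$ from which $G$ arises by contracting a forest $F$; by Observation~\ref{forestdiff}, restriction to $E(G')\setminus F$ injects the nowhere-zero $\Z_2\times\Z_3$-flows of $G'$ into those of $G$, so it is enough to handle $G$ cubic. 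Assume this; then $m=\tfrac32 n$, so $2(m-n)=n$. Now take any nowhere-zero $\Z_2\times\Z_3$-flow $(\phi,\psi)$ of $G$ (Seymour). Since $\phi$ is a $\Z_2$-flow, every vertex meets an even number of edges of $\supp\phi$, and since $G$ is cubic this number is $0$ or $2$; hence $\supp\phi$ is a disjoint union of cycles and so has at most one edge per vertex it covers, giving $t=|\supp\phi|\le n=2(m-n)$. Lemma~\ref{supportflow} now completes the proof.

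The step requiring real care is the reduction to cubic graphs in the $2$-edge-connected (rather than $3$-edge-connected) setting: one must check that a high-degree vertex of a $2$-edge-connected graph of minimum degree at least $3$ can always be split while keeping the graph $2$-edge-connected, which is a short case analysis along the lines of---and slightly easier than---the one in Lemma~\ref{reduce2cubic}. I expect this, rather than anything about flows, to be the main obstacle. One could instead bypass the reduction by showing directly that every $2$-edge-connected $G$ has a nowhere-zero $\Z_2\times\Z_3$-flow whose $\Z_2$-part is supported on at most $2(m-n)$ edges---for instance by carrying a $\Z_2$-flow and a $\Z_3$-flow simultaneously through an ear-decomposition proof of Seymour's theorem and checking that each of the $m-n$ ears forces at most two new edges into the support of the $\Z_2$-part---but this bookkeeping is more delicate, which is why routing through cubic graphs is the cleaner option.
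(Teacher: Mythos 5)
Your proposal is correct and follows essentially the same route as the paper: reduce to a cubic graph, invoke Seymour's theorem to get a nowhere-zero $\Z_2\times\Z_3$-flow, bound the support of the $\Z_2$-part by the number of vertices of the cubic graph (i.e.\ by $2(m-n)$), and finish with Lemma~\ref{supportflow}. The only divergence is the step you yourself flag as delicate: rather than proving a $2$-edge-connected variant of the vertex-splitting lemma (which is in fact true, by the case analysis you sketch), the paper first eliminates $2$-edge-cuts by contracting one edge of each such cut and inducting on $n$ --- every nowhere-zero flow of the contraction lifts uniquely --- after which the already-established Lemma~\ref{reduce2cubic} applies verbatim to the resulting $3$-edge-connected graph.
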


\begin{proof}
  We proceed by induction on $n$ with the base case $n=1$ holding trivially (each of the $m$ loop edges may be assigned any non-zero value in $\Z_2 \times \Z_3$ to get a nowhere-zero flow and this can be done in
  $5^m \ge 2^{(m-1)/3}  = 2^{(m-n)/3}$ ways).  Next suppose that there is an edge $e$ so that $e$ is in a 2-edge-cut with another edge $e'$, and apply the theorem inductively to $G/e$.  Every nowhere-zero flow $\phi$ of
  $G/e$ extends uniquely to one in $G$ (if $e$ and $e'$ are consistently oriented in the edge-cut $\{e,e'\}$, then we extend by setting $\phi(e) = - \phi(e')$).  This gives the desired count of nowhere-zero flows,
  and we may therefore assume that $G$ is 3-edge-connected.

  Now apply Lemma~\ref{reduce2cubic} to choose a 3-edge-connected cubic graph $G' = (V',E')$ with $2 \ell$ vertices where $\ell = m-n$ so that $G$ can be obtained from $G'$ by contracting the edges of a forest.  In
  light of Observation~\ref{forestdiff}, it suffices to prove that $G'$ has at least $2^{(m-n)/3} = 2^{\ell /3}$ nowhere-zero $\Z_2 \times \Z_3$-flows.  By Seymour's 6-flow theorem (part 1 of
  Theorem~\ref{classicflow}) we may choose flows $\phi : E' \rightarrow \Z_2$ and $\psi : E' \rightarrow \Z_3$ with $\supp(\phi) \cup \supp(\psi) = E'$.  Since $G'$ is cubic, the support of $\phi$ is a
  disjoint union of (edge-sets of) cycles and thus $|\supp(\phi)| \le |V'| = 2 \ell$.  Now applying Lemma~\ref{supportflow} gives us at least $2^{\ell - 2\ell/3} = 2^{\ell/3}$ nowhere-zero $\Z_2 \times \Z_3$-flows,
  as claimed.
\end{proof}

\begin{theorem}
\label{flowthm2}
  Every oriented 4-edge-connected graph with $n$ vertices has at least $2^{n/3}$ nowhere-zero $\Z_2 \times \Z_2$-flows.
\end{theorem}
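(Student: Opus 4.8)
I would mirror the proof of Theorem~\ref{flowthm1}: produce a $\Z_2$-flow $\phi$ and a $\Z_2$-flow $\psi$ with $\supp(\phi)\cup\supp(\psi)=E$ and with $t:=|\supp(\phi)|$ as small as possible, and then apply Lemma~\ref{supportflow} with $k=2$, which outputs at least $2^{m-n-t/2}$ nowhere-zero $\Z_2\times\Z_2$-flows (its proof only uses that $\supp(\phi)$ carries a $\Z_2$-flow of value $1$ on each of its edges, which a disjoint union of cycles does, so I need not decompose $\supp(\phi)$ into exactly two cycles). Since $G$ is $4$-edge-connected, $m\ge 2n$, and a short calculation shows it suffices to achieve $t\le 2m-\tfrac{8}{3}n$ (this forces $m-n-t/2\ge n/3$); in the extreme case $m=2n$ this asks for $t\le\tfrac{4}{3}n$.

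If every vertex of $G$ has even degree this is immediate and very wasteful: then $\mathbf 1_E$ is itself a $\Z_2$-flow, so for every $\Z_2$-flow $\eta$ of $G$ the map $e\mapsto(\eta(e),1)$ is a nowhere-zero $\Z_2\times\Z_2$-flow, and there are $2^{m-n+1}\ge 2^{n+1}$ choices of $\eta$.

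So suppose $G$ has a nonempty (hence even-sized) set $O$ of odd-degree vertices. I would take $\phi=\mathbf 1_Z$, where $Z\subseteq E$ is an \emph{even} subgraph that meets every odd edge cut of $G$; then no odd cut is contained in $E\setminus Z$, so $E\setminus Z$ lies inside some even subgraph, yielding a $\Z_2$-flow $\psi$ with $\supp(\psi)\supseteq E\setminus Z$ and $\supp(\phi)\cup\supp(\psi)=E$, and Lemma~\ref{supportflow} applies with $t=|Z|$. To construct $Z$: pick two edge-disjoint spanning trees $T_1,T_2$ of $G$ (these exist since $G$ is $4$-edge-connected), let $S_i\subseteq T_i$ be the (unique) $O$-join inside $T_i$, and set $Z=S_1\cup S_2$. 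Each $O$-join meets every odd cut, because $|S_i\cap\delta(X)|\equiv|O\cap X|\pmod{2}$; the union $Z$ has all even degrees (odd $+$ odd at vertices of $O$, even $+$ even elsewhere), so it is a disjoint union of cycles, each component of which contains an even number of $O$-vertices (restrict $S_1$ to that component), and hence $Z$ also meets every odd cut. Since $S_1,S_2$ are forests, $|Z|\le 2(n-1)$, and when $|O|\ge\tfrac{2}{3}n$ this is already enough: then $m\ge 2n+|O|/2\ge\tfrac{7}{3}n$, so $t\le 2n-2\le 2m-\tfrac{8}{3}n$.

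The main obstacle is the remaining case of \emph{few} odd-degree vertices, where the bound $|Z|\le 2(n-1)$ is too weak. There one needs a genuinely small even subgraph meeting every odd cut — equivalently, a small even subgraph covering $O$ with an even number of $O$-vertices in each component. Two plausible routes: cover the vertices of $O$ by cycles through pairs of them that $4$-edge-connectivity keeps short (the diameter of a $4$-edge-connected $n$-vertex graph is $O(n)$ with a small constant, which controls such cycles); or first reduce — by an expansion argument in the spirit of Lemma~\ref{reduce2cubic}, which only adds vertices and, by Observation~\ref{forestdiff}, leaves the flow count unchanged — to a $4$-edge-connected graph all of whose degrees are $4$ or $5$, and exploit its rigid structure (one would seek a spanning connected even subgraph $W$ of degree $4$ at each of the $p$ degree-$5$ vertices and degree $2$ elsewhere; such a $W$ has exactly $|V|+p$ edges, which makes Lemma~\ref{supportflow} give $2^{|V|/2}\ge 2^{n/3}$, since $|V|\ge n$). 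Establishing the existence of a suitably small such subgraph, and reconciling the large- and small-$|O|$ regimes into the single clean bound $2^{n/3}$, is where the real work lies.
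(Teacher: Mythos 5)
Your proposal is incomplete, and the gap you acknowledge at the end is a genuine one, not a routine detail. Your strategy needs a $\Z_2$-flow $\phi$ whose support $Z$ is an even subgraph meeting every odd cut with $|Z|\le 2m-\tfrac83 n$, and your two-spanning-tree construction only achieves this when $|O|$ is large (that part of your argument, including the reduction ``$Z$ meets every odd cut iff $E\setminus Z$ extends to an even subgraph,'' is correct). For small nonempty $O$ neither of your ``plausible routes'' is close to a proof: the diameter remark gives no control on the total length of a union of cycles pairing up the vertices of $O$ (and a $4$-edge-connected graph need not even be $2$-vertex-connected, so two prescribed vertices need not lie on a common cycle), and the existence of a spanning connected even subgraph with degree $4$ at every degree-$5$ vertex and degree $2$ elsewhere is an unproved — and quite strong — structural claim. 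So as written the proof covers only the regimes $O=\emptyset$ and $|O|\ge\tfrac23 n$.

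The paper sidesteps this construction entirely. It invokes Jaeger's $4$-flow theorem to get a nowhere-zero flow $\phi=(\phi_1,\phi_2):E\to\Z_2\times\Z_2$ and then exploits the symmetry among the three nonzero elements of $\Z_2\times\Z_2$: each edge carries one of $(1,0),(0,1),(1,1)$, so after permuting these values one may assume at least $\tfrac13 m$ edges carry $(0,1)$, i.e.\ $t=|\supp(\phi_1)|\le\tfrac23 m$. Feeding $\phi_1,\phi_2$ into Lemma~\ref{supportflow} with $k=2$ then gives $m-n-t/2\ge\tfrac23 m-n\ge\tfrac13 n$ directly, with no case analysis on odd-degree vertices. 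You might note that this is exactly the quantitative strengthening your first paragraph asks for ($t\le 2m-\tfrac83 n$ follows from $t\le\tfrac23 m$ and $m\ge 2n$), obtained by averaging over the three ``coordinates'' $\phi_1,\phi_2,\phi_1+\phi_2$ — each edge lies in exactly two of their supports — rather than by building a small even subgraph from scratch. If you want to complete your own route, you would in effect be reproving a sharpened form of Jaeger's theorem; the intended proof shows you can instead use it as a black box.
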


\begin{proof}
Let $G = (V,E)$ be a 4-edge-connected graph with $|V| = n$ and $|E| = m$ and apply Jaeger's 4-flow theorem to choose a nowhere-zero flow $\phi : E \rightarrow \Z_2 \times \Z_2$.  Thanks to the symmetry of the factors of the
  group $\Z_2 \times \Z_2$ we may permute these flow values $(1,0), (0,1), (1,1)$ while maintaining a flow (for instance, we can change all edges with flow value $(0,1)$ to $(1,1)$
  and all those with flow value $(1,1)$ to $(0,1)$ and we still have a flow).  By way of this operation, we may assume that at least $\frac{1}{3}m$ edges $e$ satisfy $\phi(e) = (0,1)$.  Now letting
  $\phi_1 : E \rightarrow \Z_2$ denote the first coordinate of $\phi$ and $\phi_2 : E \rightarrow \Z_2$ the second, we have $t = | {\mathit supp}(\phi_1)| \le \frac{2}{3}m$.
  Since every vertex has degree at least 4 we have $m \ge 2n$ and thus
$$
  m - n - \tfrac{1}{2}t \ge \tfrac{2}{3}m - n \ge \tfrac{1}{3}n.
$$
The result follows from this inequality and Lemma~\ref{supportflow} applied to $\phi_1$ and $\phi_2$.
\end{proof}

%%%%%%%%%%%%%%%%%%%%%%%%%%%%%%%%%%%%%%%%%%%%%%%%%%%%%%%%%%%%%%%%%%%%%%%%%%%%%%%%%%%%%%%%%%
\section{Seymour's bases}
\label{sec:4}

Following Seymour~\cite{S}, we introduce a closure operator for subsets of edges of a graph.  Let $G = (V,E)$ be a graph and let $k$ be a positive integer.  For every $S \subseteq E$, the \emph{$k$-closure} of $S$, denoted $\langle S \rangle_k$, is defined to be the minimal edge-set $R$ such that $S\subseteq R \subseteq E$ and every cycle $C \subseteq G$ with $E(C) \setminus R \ne \emptyset$, has at least $k+1$ edges that are not in $R$.
Clearly, $\langle S \rangle_k$ can be constructed starting with $R=S$ and then consecutively add the edges of any cycle $C$ violating the condition, i.e., if $0 < |E(C)\setminus R| \le k$, then we add $E(C)$ into $R$ and repeat.
This is a closure operator as evidenced by the following properties, all of which are easy to verify:
\[
  S \subseteq \langle S \rangle_k, \qquad \langle \langle S \rangle_k \rangle_k = \langle S \rangle_k, \qquad S' \subseteq S \Rightarrow \langle S' \rangle_k \subseteq \langle S \rangle_k.
\]

A subset $S \subseteq E$ is called a $k$-base if $\langle S \rangle_k = E$. It is easy to see that a set of edges of a connected graph is a 1-base if and only if it contains the edge-set of a spanning tree.  A key
feature of the definition of $k$-bases is that they can be extended to the whole edge-set by a sequence of steps, each of which adds at most $k$ new edges along a cycle.
This property can be used to find $\Gamma$-flows whose support contains $E\setminus S$ for all groups~$\Gamma$ with $|\Gamma|>k$. In particular, we have the following
well-known statement:

\begin{lemma}
\label{base2groupflow}
  Let $k$ and $q$ be integers with $0 < k < q$, let $G = (V,E)$ be a graph, and let $S \subseteq E$ be a $k$-base. For every $f : E \setminus S \rightarrow \Z_q$ there exists a flow $\phi : E \rightarrow \Z_q$ satisfying $\phi(e) \neq f(e)$ for every $e \in E \setminus S$.
\end{lemma}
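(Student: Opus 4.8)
The plan is to prove this by induction on $|E \setminus S|$. The base case $S = E$ is vacuous — take $\phi = 0$ — so assume $S \subsetneq E$. Since $S$ is a $k$-base, there must exist a cycle $C$ with $0 < |E(C) \setminus S| \le k$ (otherwise $S$ would already be closed, forcing $S = \langle S \rangle_k = E$). I set $S' = S \cup E(C)$; by monotonicity of the closure, $E = \langle S \rangle_k \subseteq \langle S' \rangle_k \subseteq E$, so $S'$ is again a $k$-base, and $|E \setminus S'| < |E \setminus S|$.

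I then apply the inductive hypothesis to $S'$ together with the restriction of $f$ to $E \setminus S'$, obtaining a flow $\phi' : E \rightarrow \Z_q$ with $\phi'(e) \neq f(e)$ for every $e \in E \setminus S'$. Fixing an orientation of $C$, let $\rho$ be the associated $\Z_q$-flow, which equals $\pm 1$ on $E(C)$ and $0$ elsewhere. For every $a \in \Z_q$ the map $\phi = \phi' + a\rho$ is still a flow, and since $E(C) \subseteq S'$ we have $\rho(e) = 0$ for all $e \in E \setminus S'$, so $\phi$ agrees with $\phi'$ there — hence $\phi(e) \neq f(e)$ on $E \setminus S'$ regardless of $a$. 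It remains to pick $a$ so that $\phi(e) \neq f(e)$ also on $N := E(C) \setminus S$, the set of new edges, which together with $E \setminus S'$ exhausts $E \setminus S$. For each $e \in N$ the value $\rho(e) \in \{1,-1\}$ is a unit of $\Z_q$, so exactly one $a \in \Z_q$ satisfies $\phi'(e) + a\rho(e) = f(e)$; thus at most $|N| \le k$ values of $a$ are forbidden, and since $q > k$ some admissible $a$ exists. This completes the induction.

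The argument is short and I do not anticipate a real obstacle; the only delicate point is the bookkeeping of which edges are already under control. The key observation is that enlarging $S$ to the $k$-base $S' = S \cup E(C)$ introduces only the $\le k$ new edges on the single cycle $C$, and since all of $E(C)$ gets absorbed into $S'$, the correcting term $a\rho$ leaves the inductively handled edges untouched and modifies only those few new edges — few enough ($\le k < q$) that a single group element $a$ avoids all forbidden values simultaneously. (Equivalently, one processes the cycles produced by the closure construction in reverse order, which is exactly what this induction encodes.)
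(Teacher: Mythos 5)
Your proof is correct and follows essentially the same route as the paper's: induct on $|E \setminus S|$, absorb a cycle $C$ with $0 < |E(C)\setminus S| \le k$ into $S$, and shift the inductively obtained flow by a multiple of the elementary flow on $C$, using $q > k$ to avoid all forbidden values on the new edges. The only cosmetic difference is that the paper first reduces to $q = k+1$, while you argue directly for general $q$ (which works equally well since $\pm 1$ is a unit in $\Z_q$).
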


\begin{proof}
  Since every $k$-base is a $(q-1)$-base, we may assume that $q=k+1$. 
  We proceed by induction on $|E \setminus S|$. As a base case, when $|E \setminus S| = 0$ the result holds trivially (with $\phi$ zero everywhere).
  For the inductive step $S \neq E$ so we may choose a cycle $C \subseteq G$ so that $0 < | E(C) \setminus S| \le k$ and we let $S' = S \cup E(C)$.
  By applying the induction hypothesis to $S'$ we may choose a flow $\phi' : E \rightarrow \Z_{k+1}$ so that $\phi'(e) \neq f(e)$ holds for every
  $e \in E \setminus S'$.  Now let $\nu : E \rightarrow \Z_{k+1}$ be a $\Z_{k+1}$-flow taking the values $\pm 1$ on $E(C)$ and the value $0$ elsewhere.
  Let $x \in \Z_{k+1}$ and consider the flow $\phi = \phi' + x \nu$. For every $e \in E(C) \setminus S$ there is precisely one value of $x$ for which we will have $\phi(e) = f(e)$.
  Since there are at most $k$ edges in $E(C) \setminus S$ but $k+1$ possible values for $x$, we may assign $x$ a value so that $\phi(e) \neq f(e)$ holds
  for every $e \in E(C) \setminus S$, and then $\phi$ satisfies the lemma.
\end{proof}

In Seymour's original paper~\cite{S} on nowhere-zero 6-flows, he proves two structure theorems giving the existence of a 2-base (these give two different proofs of the 6-flow theorem).  We summarize these results below.

\begin{theorem}[Seymour \cite{S}]
\label{seymourdecomp}
Let $G$ be a 3-edge-connected cubic graph.
\begin{enumerate}
  \item There exists a collection of edge-disjoint cycles $C_1, \ldots, C_t$ so that $\cup_{i=1}^t E(C_i)$ is a 2-base.
  \item There exists a partition of $E(G)$ into $\{B_1, B_2\}$ so that $B_i$ is an $i$-base for $i=1,2$.
\end{enumerate}
\end{theorem}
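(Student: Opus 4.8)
The plan is to follow Seymour's method, which is purely about the closure operator $\langle\cdot\rangle_2$ and uses no flow theory. The engine for both parts is the following observation about a $2$-closed set $R$ with $R\neq E$: \emph{the graph $G/R$ obtained by contracting each component of $(V,R)$ to a single vertex is $3$-edge-connected and simple}. Indeed, an edge of $E\setminus R$ with both ends in one component of $(V,R)$ would, together with a path in $R$, form a cycle with exactly one edge outside $R$; two edges of $E\setminus R$ joining the same pair of components would similarly form a cycle with exactly two edges outside $R$; and every edge cut of $G/R$ is an edge cut of $G$, hence has at least three edges. Consequently, when $R\neq E$ the graph $G/R$ has minimum degree at least $3$, so it contains a cycle $\bar C$, which lifts to a cycle $C$ of $G$ using the edges of $\bar C$ (all lying outside $R$) together with ``detour'' paths lying inside the contracted components, hence inside $R$.

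\emph{Part 1.} Since $G$ is cubic, an even subgraph has all degrees $0$ or $2$, so a family of edge-disjoint cycles is the same thing as an even subgraph; let $Z$ denote its edge set. I would choose $Z$ so that $|\langle Z\rangle_2|$ is maximum, and among those so that $|Z|$ is minimum, and claim $R:=\langle Z\rangle_2=E$. If not, take the lifted cycle $C$ of $G/R$ as above. The edges of $C$ outside $R$ form $E(\bar C)\neq\emptyset$, which is automatically disjoint from $Z\subseteq R$; so \emph{if} the detour paths of $C$ could be chosen to avoid $Z$ entirely, then $Z':=Z\cup E(C)$ would again be a disjoint union of cycles, would contain $Z$, and hence would satisfy $\langle Z'\rangle_2\supseteq R\cup E(\bar C)\supsetneq R$, contradicting maximality. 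The real work is that a detour through a contracted component $K$ may be forced to cross one of the cycles $C_i$ of $Z$ lying inside $K$; one then reroutes the detour around the complementary arc of $C_i$, and when that still fails, performs a swap that trades $C_i$ against part of $C$. Showing that this process terminates — with the minimality of $|Z|$ (or of the number of cycles) guaranteeing that some complexity measure strictly decreases whenever a swap is needed — and that the object produced is still a disjoint union of cycles with closure strictly larger than $R$, is the main obstacle.

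\emph{Part 2.} Now the goal is a $2$-base $B_2$ whose complement $B_1=E\setminus B_2$ is connected and spanning, i.e.\ a $1$-base. I would take $B_2$ to be a $2$-base for which $(V,E\setminus B_2)$ has the fewest components, and suppose for contradiction that it has at least two. Pick a component with vertex set $V_1$; then $\delta_G(V_1)\subseteq B_2$ and $|\delta_G(V_1)|\geq 3$. The plan is to find an edge $e\in\delta_G(V_1)$ with $\langle B_2\setminus\{e\}\rangle_2=E$: then $B_2\setminus\{e\}$ is a $2$-base whose complement has one fewer component, a contradiction. Since $B_2\setminus\{e\}$ is contained in its own $2$-closure, it is enough to re-absorb $e$, for which it suffices to produce a single cycle through $e$ using at most one edge outside $B_2$. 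Constructing such a near-$B_2$ detour around $e$ is the technical heart: one splices a path inside the $B_2$-part of each component with one edge of $\delta_G(V_1)$, and $3$-edge-connectivity — applied to $\delta_G(V_1)$ and to the cuts separating the remaining components — is what makes enough such edges available. (Part 2 can also be bootstrapped from Part 1 by enlarging the even subgraph $Z$ to a $2$-base while keeping its complement connected, at the cost of re-encountering the same rerouting issue.)

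In both parts the only hypotheses used are that $G$ is cubic — so that ``edge-disjoint cycle family'' means ``even subgraph'' and degrees stay bounded — and that $G$ is $3$-edge-connected, which is exactly what forces the quotient $G/R$ and the cuts $\delta_G(V_1)$ to be rich enough to run the augmentation.
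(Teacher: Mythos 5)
Your proposal is a plan rather than a proof: in both parts the step that carries all of the difficulty is explicitly left open, so there is a genuine gap. In Part 1 the setup (the $2$-closure $R$ contracts to a simple $3$-edge-connected graph, whose cycles lift to cycles of $G$ meeting $E\setminus R$ in at least three edges) is fine, but the augmentation is exactly where the theorem lives. Note that since $G$ is cubic, an even subgraph $Z$ and a cycle $C$ edge-disjoint from $Z$ are automatically \emph{vertex}-disjoint, so your lifted cycle's detours must avoid $V(Z)$ entirely, not merely $E(Z)$; you give no argument that such detours exist, and the fallback ``reroute around the complementary arc of $C_i$, otherwise swap'' is not accompanied by any defined complexity measure, any proof that a swap preserves ``disjoint union of cycles,'' or any proof that the closure does not shrink. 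In Part 2, the whole content is the claim that for a $2$-base $B_2$ whose complement is disconnected one can pick a cut edge $e\in\delta_G(V_1)$ lying on a cycle with at most one edge outside $B_2$ besides $e$; you call this ``the technical heart'' and do not prove it, and it is not clear that an arbitrary component-minimizing $B_2$ admits such a cycle. So neither part is established.

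For comparison, the paper does not reprove Part 1 at all (it is only quoted from Seymour and not used later), and it proves a strengthening of Part 2 by a constructive growth process rather than an extremal-choice-plus-repair argument: starting from a peripheral cycle (Theorem~\ref{tutteperiphcubic}), Lemma~\ref{lem:7.2} repeatedly attaches a path $P$ in the uncovered part that is \emph{peripheral} there and whose only contacts with the current vertex set are its two end-edges; those two edges are absorbed by the $2$-closure, while peripherality keeps the complement connected, so the final set is a $2$-base whose complement contains a spanning tree, i.e.\ a $1$-base. The existence of such paths is exactly Lemma~\ref{getapath1}, proved by a lexicographic extremal choice of the path. That construction sidesteps the rerouting difficulties you ran into and is what the later sections strengthen and reuse; if you want to complete your route instead, the missing lemmas above are what you must supply.
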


Both parts of this theorem give the existence of a nowhere-zero $\Z_2 \times \Z_3$-flow quite immediately. For the first, we may choose a $\Z_2$-flow with support equal to $\cup_{i=1}^t E(C_i)$ and Lemma \ref{base2groupflow}
(applied with $f = 0$) gives a $\Z_3$-flow whose support contains the complement.  For the second part, we may apply the above lemma twice (both times with $f=0$) to choose a $\Z_2$-flow with support containing
$E \setminus B_1$ and a $\Z_3$-flow with support containing $E \setminus B_2$.

The second decomposition in Theorem~\ref{seymourdecomp} will be more useful to us, and in fact we will require some slightly stronger variants of this, so we will develop a proof of this later in the paper.
To see the utility of this second decomposition in the setting of group-connectivity, we follow Jaeger et al.~\cite{JLPT} to prove the following result.

\begin{theorem}[Jaeger, Linial, Payan, and Tarsi \cite{JLPT}]
  Every 3-edge-connected graph is $\Z_6$-connected.
\end{theorem}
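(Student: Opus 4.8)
The plan is to combine the second decomposition in Theorem~\ref{seymourdecomp} with Lemma~\ref{base2groupflow}, first reducing to the cubic case. Given a 3-edge-connected graph $G$, by Lemma~\ref{reduce2cubic} it suffices to prove $\Z_6$-connectivity for 3-edge-connected cubic graphs, since $\Z_6$-connectivity is preserved under contracting edges: if $G$ is obtained from $G'$ by contracting a set $F$ of edges and $f : E(G) \to \Z_6$ is given, extend $f$ arbitrarily to $E(G')$, find a flow $\phi'$ on $G'$ avoiding it, and restrict to $E(G) = E(G') \setminus F$ (the restriction is still a flow on $G$). Actually one must be slightly careful here — rather than invoking the reduction, it may be cleanest to just handle small cases directly and run the argument on $G$ itself, since $\Z_6 = \Z_2 \times \Z_3$ is what makes everything work. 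I will state the argument for a general 3-edge-connected graph but use the cubic reduction to get the decomposition.

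So let $G = (V,E)$ be 3-edge-connected; after the reduction we may assume $G$ is cubic. Apply part~2 of Theorem~\ref{seymourdecomp} to obtain a partition $E = B_1 \sqcup B_2$ where $B_1$ is a $1$-base and $B_2$ is a $2$-base. Now let $f : E \to \Z_6$ be arbitrary. Using the isomorphism $\Z_6 \cong \Z_2 \times \Z_3$, write $f = (f_1, f_2)$ with $f_1 : E \to \Z_2$ and $f_2 : E \to \Z_3$. Apply Lemma~\ref{base2groupflow} with $k=1$, $q=2$ to the $1$-base $B_1$ and the function $f_1$ restricted to $E \setminus B_1 = B_2$: this yields a $\Z_2$-flow $\phi_1 : E \to \Z_2$ with $\phi_1(e) \neq f_1(e)$ for every $e \in B_2$. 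Similarly apply Lemma~\ref{base2groupflow} with $k=2$, $q=3$ to the $2$-base $B_2$ and the function $f_2$ restricted to $E \setminus B_2 = B_1$, giving a $\Z_3$-flow $\phi_2 : E \to \Z_3$ with $\phi_2(e) \neq f_2(e)$ for every $e \in B_1$.

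Finally set $\phi = (\phi_1, \phi_2) : E \to \Z_2 \times \Z_3 \cong \Z_6$, which is a flow since each coordinate is. For $e \in B_1$ we have $\phi_2(e) \neq f_2(e)$, hence $\phi(e) \neq f(e)$; for $e \in B_2$ we have $\phi_1(e) \neq f_1(e)$, hence $\phi(e) \neq f(e)$. Since $B_1 \cup B_2 = E$, this shows $\phi(e) \neq f(e)$ for every $e \in E$, so $G$ is $\Z_6$-connected. The only real content is Theorem~\ref{seymourdecomp}, which is quoted; given that, the proof is essentially immediate, so there is no serious obstacle — the main point worth spelling out carefully is the bookkeeping that $f_1$ on $B_2$ is avoided by $\phi_1$ and $f_2$ on $B_1$ is avoided by $\phi_2$, and that together these cover all edges. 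The one thing to be careful about is the reduction to the cubic case (making sure contraction does not destroy $\Z_6$-connectivity), but this follows from the observation that a flow on the larger graph restricts to a flow on the contraction.
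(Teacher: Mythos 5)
Your proof is correct and follows essentially the same route as the paper: decompose $E$ into a 1-base and a 2-base via Theorem~\ref{seymourdecomp} and apply Lemma~\ref{base2groupflow} to each part, combining the resulting $\Z_2$- and $\Z_3$-flows coordinatewise. Your explicit reduction to the cubic case (which the paper skips, even though its Theorem~\ref{seymourdecomp} is stated only for cubic graphs) is a harmless and in fact slightly more careful addition.
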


\begin{proof}
  Let $G = (V,E)$ be an oriented 3-edge-connected graph, let $f_1 : E \rightarrow \Z_2$, and let $f_2 : E \rightarrow \Z_3$.  Apply the second part of Theorem~\ref{seymourdecomp} to choose a partition of $E$ into $\{B_1, B_2\}$ so that $B_i$ is an $i$-base for $i=1,2$.  Now for $i=1,2$ we apply Lemma~\ref{base2groupflow} to choose a flow $\phi_{i} : E \rightarrow \Z_{i}$ so that $\phi_i(e) \neq f_i(e)$ holds for every $e \in E \setminus B_i$.  Now $(f_1(e), f_2(e)) \neq (\phi_1(e), \phi_2(e))$ holds for every $e \in E$ and this completes the proof.
\end{proof}

%%%%%%%%%%%%%%%%%%%%%%%%%%%%%%%%%%%%%%%%%%%%%%%%%%%%%%%%%%%%%%%%%%%%%%%%%%%%%%%%%%%%%%%%%%
\section{Large groups}
\label{sect:5}

In this section we prove Theorem~\ref{order8plus}, our result for groups of order 8 or more.  For such large groups all that we need is Seymour's decomposition theorem.
We have restated this theorem below for convenience.

\begin{theorem*}
Let $G=(V,E)$ be an oriented 3-edge-connected graph with $\ell = |E| - |V|$ and let $\Gamma$ be an abelian group with $|\Gamma| = k \ge 6$.  For every $f : E \rightarrow \Gamma$ we have
\[ | \{ \phi : E \rightarrow \Gamma \mid \mbox{$\phi$ is a flow and $\phi(e) \neq f(e)$ for every $e \in E$} \}|
	\ge \left\{ \begin{array}{cl}
		\frac{1}{2} (\frac{k-6}{2} )^{\ell}	&	\mbox{if $k$ is odd,}	\\
		\frac{1}{2}( \frac{k-4}{2} )^{\ell}	&	\mbox{if $k$ is even.}
		\end{array} \right. \]
\end{theorem*}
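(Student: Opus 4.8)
The plan is to first reduce to the cubic case via Lemma~\ref{reduce2cubic} and Observation~\ref{forestdiff}. Given an oriented 3-edge-connected $G=(V,E)$ with $\ell=|E|-|V|$, we obtain a 3-edge-connected cubic $G'=(V',E')$ with $|V'|=2\ell$ (so $|E'|=3\ell$) such that $G$ arises from $G'$ by contracting a forest $F$. By Observation~\ref{forestdiff}, flows on $G$ extending a given $f:E\to\Gamma$ correspond bijectively to flows $\phi'$ on $G'$ with $\phi'(e)\neq f(e)$ for $e\in E'\setminus F$ (and $\phi'$ arbitrary on $F$, but forced once the rest is fixed); since the forced values on $F$ might collide with $f$, it is cleanest to prove the bound directly on $G'$ for the lifted function, counting $\phi':E'\to\Gamma$ with $\phi'(e)\neq f(e)$ for \emph{all} $e\in E'$ --- this only makes the count smaller, and $|E'|-|V'|=\ell$, so the target exponent is the same. (One must check the constant $\tfrac12$ survives; it does, since we lose nothing in the reduction.)

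Next I would apply part~2 of Theorem~\ref{seymourdecomp} to partition $E'=B_1\sqcup B_2$ with $B_i$ an $i$-base. The key observation is a counting refinement of Lemma~\ref{base2groupflow}: if $S$ is a $k$-base and $q>k$, then for every $f:E\setminus S\to\Z_q$ the number of flows $\phi:E\to\Z_q$ with $\phi(e)\neq f(e)$ on $E\setminus S$ is at least $(q-k)^{|E\setminus S|}$ --- indeed, in the inductive step of that lemma's proof, after fixing $\phi'$ on the closure $S'=S\cup E(C)$, each of the $\le k$ edges of $E(C)\setminus S$ rules out at most one value of $x\in\Z_q$, leaving $\ge q-k$ valid choices, and these choices on disjoint cycle-steps multiply. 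So a $1$-base $B_1$ contributes (with $q=k$, treating $\Gamma$ additively via an isomorphism $\Gamma\cong\Z_k$ when $k$ is... wait) --- here I must be careful: Lemma~\ref{base2groupflow} is stated for $\Z_q$, but what is actually used is only that one can add $\pm1$ multiples of a cycle flow, which works in any abelian group $\Gamma$ provided $\Gamma$ has an element acting freely enough; the clean route is to pick a cyclic subgroup or just redo the argument noting that along a cycle $C$ a flow $\nu$ with values $\pm1$ exists in any $\Gamma$, and $\{x\nu : x\in\Gamma\}$ gives $|\Gamma|$ choices, each killing $\le k$ bad edges at a cost of one $x$ each. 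Thus a $1$-base gives $\ge(k-1)^{|E\setminus B_1|}$ flows in the first coordinate and a $2$-base gives $\ge(k-2)^{|E\setminus B_2|}$... but that is for a product group. For a single group $\Gamma$ of order $k$ we cannot split coordinates, so instead I would use the decomposition differently: take the $2$-base $B_2$ alone, giving $\ge(k-2)^{|E\setminus B_2|}$ flows avoiding $f$ off $B_2$; but we still need to avoid $f$ on $B_2$ itself.

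Here is the main obstacle and how I would resolve it. The issue is controlling the flow on $B_1$ (equivalently, ensuring $\phi\neq f$ there too). The idea is: first use the $2$-base $B_2$ with a counting argument to get many flows $\phi$ with $\phi(e)\neq f(e)$ for $e\in E\setminus B_2$; among these the values on $B_1$ range over an affine subspace, and on each edge of $B_1$ a random such flow takes each value with controlled probability. More precisely, since $B_1$ contains a spanning tree, $|E\setminus B_1|=\ell$, and $|B_2|=|E|-|B_1|\le|E|-(|V|-1)=\ell+1$. I would instead run the $1$-base argument on $B_1$: fix arbitrary values on the $\ell$ edges of $E\setminus B_1$ avoiding $f$ there ($\ge(k-1)^\ell$ ways by the refinement, or even $(k-1)^\ell$ trivially since $E\setminus B_1$ spans a forest-complement... ), then the flow is \emph{determined} on $B_1$ (spanning tree), and we need the determined values to avoid $f$ on all $\ell+1$-ish edges of $B_2\cap B_1$-ish. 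That determinism is the enemy. So the correct approach is the reverse: use the \emph{2}-base $B_2$, whose complement $E\setminus B_2=B_1$ has $|B_1|=|V|-1+$(something); extend via Lemma-counting to get $\ge(k-2)^{|B_1|}$ flows avoiding $f$ on $B_1$, then observe these flows, restricted to $B_2$, still have enough freedom (the cycle space has dimension $\ell$, and the $2$-base steps leave a product of independent $\ge(k-2)$-choices over $\ell$ coordinates indexed by $E\setminus B_2$) --- and then on $B_2$ we need a separate averaging. I expect the genuinely delicate point is interleaving the two bases so that the free edges where we get a factor $\ge\tfrac{k-4}{2}$ or $\ge\tfrac{k-6}{2}$ number exactly $\ell$: this is where the halving (hence the leading $\tfrac12$) and the parity split ($k$ even vs.\ odd) enter, presumably because one combines a $\Z_2$-flow (contributing the cycle-space factor $2^{\ell}$, i.e.\ $\tfrac12\cdot2^{\ell}$ after fixing orientation/basepoint) with a $\Z_{\lfloor k/2\rfloor}$- or $\Z_{(k-1)/2}$-type flow on a quotient, exactly as in the proof of Theorem~\ref{flowthm1} via Lemma~\ref{supportflow}. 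I would therefore model the final argument on Lemma~\ref{supportflow}: write $\Gamma$ (or a large cyclic quotient/subgroup) to extract a $\Z_2$ factor of dimension $\ell$ from the cycle space and an independent $\approx k/2$-valued choice on each of $\ell$ coordinates from the $2$-base extension, multiply to get $\tfrac12(\tfrac{k-\varepsilon}{2})^\ell$ with $\varepsilon\in\{4,6\}$ by parity, and verify the bad-value exclusions cost at most the stated amount per coordinate.
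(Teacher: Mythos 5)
Your reduction to the cubic case and your identification of the right tools (Seymour's 1-base/2-base decomposition, the counting refinement of Lemma~\ref{base2groupflow} giving roughly $(k-2)$ free choices per 2-base step over $\ell$ steps) match the paper's opening moves. But you correctly diagnose the central difficulty --- guaranteeing $\phi(e)\neq f(e)$ on the 2-base $B$ itself, where the flow is essentially determined --- and then do not resolve it. Your proposed resolution (extract a $\Z_2$ factor or quotient from $\Gamma$ and imitate Lemma~\ref{supportflow}) cannot work in general: the theorem covers arbitrary abelian groups of order $k\ge 6$, and for odd $k$ (e.g.\ $\Z_7$, $\Z_9$) there is no $\Z_2$ factor, subgroup, or quotient to extract. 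The parity split in the bound does not come from the structure of $\Gamma$ at all.

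The two ideas you are missing are these. First, fix a single element $x\in\Gamma$ (of order $2$ if $k$ is even, arbitrary nonzero otherwise) and run the 2-base extension process demanding the \emph{stronger} condition $\phi(e)\notin\{f(e),\,f(e)+x,\,f(e)-x\}$ on every $e\in E'\setminus B$. Each of the at most two new edges per step now forbids at most $3$ (resp.\ $2$, since $f(e)+x=f(e)-x$ when $x$ has order $2$) of the $k$ elementary-flow multiples, giving at least $k-6$ (resp.\ $k-4$) choices per step and hence $(k-6)^{\ell}$ or $(k-4)^{\ell}$ flows satisfying the strengthened condition off $B$; this is the real source of the $k-6$ versus $k-4$ dichotomy. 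Second, to repair the edges of $B$: for each such flow $\phi$ let $S=\{e\in B\mid \phi(e)=f(e)\}$, take the spanning tree $T\subseteq E'\setminus B$, and add the flow $\mu_S$ supported on $\bigoplus_{e\in S}C_e$ (fundamental cycles) chosen to take values $\pm x$ on $S$. This forces $\phi+\mu_S$ to avoid $f$ on $S$, leaves the rest of $B$ untouched in the relevant sense, and perturbs edges of $E'\setminus B$ only by $0$ or $\pm x$ --- which is exactly what the strengthened condition was designed to absorb. Pigeonholing over the at most $2^{|B|}\le 2^{\ell+1}$ possible sets $S$ (flows sharing the same $S$ receive the same correction, hence remain distinct) yields the factor $\frac{1}{2}\left(\frac{1}{2}\right)^{\ell}$ and the stated bound. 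Without the over-avoidance step $(\star)$, the correction step destroys the avoidance already achieved off $B$, and without the correction step the edges of $B$ are uncontrolled; your proposal has neither, so the gap is essential.
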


\begin{proof}
  Apply Lemma~\ref{reduce2cubic} to choose a cubic graph $G' = (V', E')$ with $|V'|= 2 \ell$ and note that by Observation~\ref{forestdiff} it suffices to prove the above result with $G'$ in place of $G$.
  If $k$ is even, choose $x \in \Gamma$ to be an element of order 2; otherwise choose $x \in \Gamma \setminus \{0\}$.

  Apply the second part of Theorem~\ref{seymourdecomp} to choose a 2-base $B$ of $G'$ such that $E'\setminus B$ is a 1-base. This implies that $G' - B$ is connected.  The first stage in our proof will be to construct many flows $\phi : E' \rightarrow \Gamma$ with
  the following property for every $e \in E' \setminus B$:
\[
  (\star) \quad \phi(e) \not\in \{ f(e), f(e) + x, f(e) -x \}.
\]
  Let us observe that $f(e)+x=f(e)-x$ when $k$ is even.
  Let us now invoke the definition of 2-base to choose a sequence of nested sets
  $B = B_0 \subset B_1 \subset \ldots \subset B_t = E'$ satisfying:
  \begin{itemize}
    \item $|B_{i} \setminus B_{i-1}| \le 2$ for every $1 \le i \le t$, and
    \item there exists a cycle $C_i$ with $(B_i \setminus B_{i-1}) \subseteq E(C) \subseteq B_i$ for every $1 \le i \le t$.
  \end{itemize}
  We will construct our flows recursively using elementary flows on the cycles $C_i$ (working backwards).  Initially start with $\phi : E' \rightarrow \Gamma$ to be the zero flow.  Let $\nu_t$ be either the zero flow or an elementary flow supported on $E(C_t)$ (so there are $k$ choices for $\nu_t$) and modify $\phi$ by adding $\nu_t$ to it.  If $k$ is odd (even), there are at most 3 (2) possible choice of $\nu_t$ so that $(\star)$ fails on an edge in $B_t\setminus B_{t-1}$. Thus, there at least $k-6$ $(k-4)$ ways to choose $\nu_t$ so that condition $(\star)$ is satisfied on every edge in $B_t \setminus B_{t-1}$.  Next choose $\nu_{t-1}$ to be either the zero flow or an elementary flow supported on $E(C_{t-1})$ and modify $\phi$ by adding $\nu_{t-1}$ so that $(\star)$ is satisfied on every edge in $B_{t-1} \setminus B_{t-2}$, and continue in this manner.  Since the edges in $B_{i} \setminus B_{i-1}$ satisfy $(\star)$ at the point when we add the flow $\nu_i$ and these edges do not appear in the support of $\nu_{i-1}, \nu_{i-2},  \ldots, \nu_1$, at the end of this process we have a flow $\phi$ that satisfies $(\star)$ on every edge in $E' \setminus B$.  Since $G' - B$ is connected we have $|E' \setminus B| \ge 2 \ell - 1$ and this means that $t \ge \ell$.  Therefore, the number of flows $\phi$ satisfying $(\star)$ on every edge in $E' \setminus B$ is at least $(k-6)^{\ell}$ when $k$ is odd and at least $(k-4)^{\ell}$ when $k$ is even.

  Choose a spanning tree $T$ with $E(T) \subseteq E' \setminus B$.  For every edge $e \in B$ let $C_e$ be the edge-set of the fundamental cycle of $e$ with respect to $T$.  For every $S \subseteq B$ define $\hat{S} = \bigoplus_{e \in S} C_e$ where $\bigoplus$ denotes the symmetric difference.  The set $\hat{S}$ may be expressed as a disjoint union of (edge-sets of) cycles so we may choose a flow $\mu_S : E' \rightarrow \Gamma$ supported on $\hat{S}$ so that $\mu_S(e) = \pm x$ for every $e \in S$.  Now for every flow $\phi$ satisfying $(\star)$ on every edge in $E' \setminus B$ we let $S = \{ e \in B \mid \phi(e) = f(e) \}$ and we define $\phi' = \phi + \mu_S$.  It follows from this construction that the resulting flow $\phi'$ will satisfy $\phi'(e) \neq f(e)$ for every $e \in E'$.  Since $|B| \le \ell + 1$, the number of subsets $S \subseteq B$ is at most
  $2^{\ell + 1}$, so if $k$ is odd we have at least $\frac{1}{2} \left( \frac{k-6}{2} \right)^{\ell}$ flows $\phi$ with the same set $S$ and for $k$ even this count will be $\frac{1}{2} \left( \frac{k-4}{2} \right)^{\ell}$.  Since each of these flows is modified by adding the same flow, $\mu_S$, this gives us the desired number of flows $\phi'$.
\end{proof}

%%%%%%%%%%%%%%%%%%%%%%%%%%%%%%%%%%%%%%%%%%%%%%%%%%%%%%%%%%%%%%%%%%%%%%%%%%%%%%%%%%%%%%%%%%
\section{Peripheral paths and cycles}
\label{sec:6}

  Seymour's proof of the second part of Theorem~\ref{seymourdecomp} is based on an iterative procedure during which the edge partition is formed, and we will require a strong form of this.  A key concept in this
  process is that of a path or cycle who's removal leaves the graph connected.  The purpose of this section is to prove three lemmas that provide the tools we need to find such paths and cycles.

  Tutte~\cite{Tu63} called a cycle $C$ of a graph $G$ \emph{peripheral} if $C$ is induced and $G - V(C)$ is connected.  A key feature of peripheral cycles is that for a graph $G$ embedded in the plane, every peripheral cycle must
  bound a face.  Tutte proved that for a 3-connected graph, every edge is contained in at least two peripheral cycles, thus giving an abstract characterization of the faces of a 3-connected planar graph (they are
  precisely the peripheral cycles).  For our purpose, we will be interested only in subcubic graphs and in this setting we can use the following definition of peripheral edge-sets.

\begin{definition}
  If $G = (V,E)$ is a subcubic graph, an edge-set $S \subseteq E$ is \emph{peripheral} if $G - S$ is connected.  We call a subgraph $H \subseteq G$ \emph{peripheral} if $E(H)$ is peripheral.
\end{definition}

Note that with this definition, Seymour's second decomposition theorem asserts the existence of a peripheral 2-base in every 3-edge-connected cubic graph.
Below we state a restricted form of Tutte's theorem (for cubic graphs) of use to us. We will provide a proof of this below.

\begin{theorem}[Tutte \cite{Tu63}]
\label{tutteperiphcubic}
  Let $G$ be a 3-edge-connected cubic graph.  For any two edges of $G$ incident with the same vertex, there exists a peripheral cycle containing both of them.
\end{theorem}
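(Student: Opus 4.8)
The plan is an extremal argument of the kind used by Tutte. Write $e_1 = vv_1$, $e_2 = vv_2$ and $e_3 = vv_3$ for the three edges at $v$. First, note that the theorem is equivalent to finding a peripheral $v_1$--$v_2$ path in the graph $H := G - v$: if $P$ is a $v_1$--$v_2$ path in $H$ and $C = P \cup \{e_1, e_2\}$, then $G - E(C)$ is obtained from $H - E(P)$ by adding $v$ attached solely through $e_3$ (to $v_3 \in V(H)$), so $G - E(C)$ is connected exactly when $H - E(P)$ is. Since $G$ is $3$-edge-connected, $H$ is $2$-edge-connected: an edge cut $\delta_H(S)$ of $H$ of size at most one, split between the two sides of $S$, would together with the appropriate edges at $v$ give two edge cuts of $G$ of total size at most $2\cdot 1 + 3 = 5$, whereas each must have size at least $3$. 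In particular $H$ is connected, a $v_1$--$v_2$ path exists, and the family $\mathcal{C}$ of cycles of $G$ containing $e_1$ and $e_2$ is nonempty. Choose $C \in \mathcal{C}$ for which $G - E(C)$ has as few components as possible; the goal is to show this number is $1$, since then $C$ is the desired peripheral cycle.

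Suppose instead that $G - E(C)$ has a component $D$ with $V(D) \neq V(G)$. As $G$ is cubic, every vertex of $C$ is incident with exactly one edge off $C$ and hence is a leaf of $G - E(C)$; consequently every edge of $G$ with exactly one end in $V(D)$ lies on $C$, and $3$-edge-connectivity guarantees at least three such boundary edges --- in fact at least four, since a cycle crosses the edge boundary of $V(D)$ an even number of times. Thus $C$ meets $V(D)$ in at least two vertex-disjoint subpaths and also runs outside $V(D)$ along at least two vertex-disjoint subpaths. The surgery I would apply is a \emph{rerouting}: choose two boundary edges $f = ax$ and $f' = a'x'$ of $D$ (with $a, a' \in V(D)$ and $x, x' \notin V(D)$) flanking a common maximal subpath $Q$ of $C$ that avoids $V(D)$ entirely; delete $E(Q) \cup \{f, f'\}$ from $C$, which leaves a path from $a$ to $a'$, and reconnect $a$ to $a'$ through a path $R$ in the connected graph $D$, forming a new cycle $C' = (C - E(Q) - f - f') \cup R$. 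A short case analysis --- immediate when $v \in V(D)$, and otherwise according to how many of $e_1, e_2$ are boundary edges of $D$ --- shows that $Q$ can be chosen so that $e_1, e_2 \notin E(Q) \cup \{f, f'\}$, so that $C' \in \mathcal{C}$. One then aims to show $G - E(C')$ has strictly fewer components than $G - E(C)$: restoring the edges $E(Q) \cup \{f, f'\}$ fuses $D$ with every component of $G - E(C)$ meeting $V(Q)$, which ought to outweigh any splitting of $D$ caused by deleting $E(R)$, contradicting the choice of $C$.

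The heart of the matter --- and the step I expect to be the main obstacle --- is exactly this final component count. Deleting an arbitrary path $R$ from the connected graph $D$ can shatter $D$ into several pieces, and a piece avoiding both ends of $R$ would survive as a fresh component of $G - E(C')$, wrecking the argument. Pushing the surgery through therefore demands choosing the component $D$, the outside arc $Q$, and the connecting path $R$ with care --- for instance by taking $D$ ``innermost'' in a suitable sense and $R$ so that $D - E(R)$ splits into at most two pieces, or by imposing a secondary extremal condition on $C$ --- in the spirit of Tutte's original treatment of peripheral cycles. The one genuinely awkward configuration is $v \in V(D)$ with both $e_1$ and $e_2$ boundary edges of $D$ lying on distinct outside arcs: then no usable outside arc $Q$ exists, and one must instead reroute along an \emph{inside} subpath of $C$, which forces a separate argument that the needed reconnecting path exists. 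Since loops and parallel edges are permitted, a handful of degenerate sub-cases also arise (an outside arc of length zero, a boundary edge parallel to an edge of $C$), but these are routine. It may in the end be cleanest to prove the equivalent peripheral-path statement by induction on $|V(H)|$, reducing across a non-trivial $2$- or $3$-edge-cut when one is present and treating the remaining essentially $3$-connected case by the rerouting above; this is presumably the role of the ``three lemmas'' announced at the start of the section.
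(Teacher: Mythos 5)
Your reduction to a peripheral $v_1$--$v_2$ path in $H=G-v$ and the observation that $H$ is $2$-edge-connected are both correct and match the paper's setup. But the proof has a genuine gap, and it is exactly the one you flag yourself: with the extremal criterion ``minimize the number of components of $G-E(C)$,'' the rerouting surgery does not obviously make progress, because the reconnecting path $R$ through $D$ may shatter $D$ into several pieces, and you have no control over how many components are gained versus how many are fused. As stated, the argument does not close, and ``choose $D$ innermost'' or ``choose $R$ so that $D-E(R)$ has at most two pieces'' are not things you have shown can be arranged. So what you have is a correct strategy outline with the decisive step missing.

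The paper's fix (Lemma~\ref{getapath1}) is the secondary extremal condition you suspected was needed, and it is worth seeing why it works. One argues with paths in $H$ rather than cycles in $G$, and one chooses the path $P$ so that the \emph{sizes} of the components of $H-E(P)$ are lexicographically maximal: first maximize the component containing a designated vertex, then the largest of the remaining components, then the next largest, and so on. If $P$ is not peripheral, one takes $F$ to be a \emph{smallest} bad component and reroutes the relevant subpath of $P$ \emph{into} $F$. The point is that the rerouted piece lies entirely inside $F$, so every component other than $F$ either is untouched or absorbs vertices freed from $P$, while at least one strictly grows; the possible shattering happens only inside $F$, which was the least significant entry in the lexicographic comparison and therefore cannot spoil the improvement. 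This is precisely the bookkeeping that the ``fewest components'' criterion cannot deliver. (Cubicity and $3$-edge-connectivity are then used, as in your sketch, to rule out the degenerate positions of $F$ relative to $P$.) If you replace your criterion by this lexicographic one and redo the surgery accordingly, your argument becomes essentially the paper's proof.
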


Now we are ready for the first of the lemmas from this section.

\begin{lemma}
\label{getapath1}
  Let $G = (V,E)$ be a 3-edge-connected cubic graph, let $X \subset V$ be nonempty, let $H$ be a component of $G - X$, and let $f \in E(X, V(H))$.  Then there exists a (possibly trivial) path $P \subseteq H$ with
  ends $y_0, y_1$ satisfying:
\begin{itemize}
  \item $P$ is peripheral in $H$;
  \item there exist distinct edges $e_0, e_1$, where $e_i \in E(y_i, X)\setminus \{f\}$ for $i=0,1$;
  \item no internal vertex on the path $P$ has a neighbor in~$X$. 
\end{itemize}
Moreover, if $H$ is 2-edge-connected and we prescribe any $y_0 \in V(H)$ such that $E(y_0,X) \setminus \{f\}$ is nonempty, we can still guarantee a peripheral path as above.
\end{lemma}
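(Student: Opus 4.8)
Here is a plan, following the strategy of reducing to the $2$‑edge‑connected case and then applying Tutte's peripheral cycle theorem (Theorem~\ref{tutteperiphcubic}) to a cubic auxiliary graph obtained by ``capping off'' $H$.

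Since $V(H)$ and $V(G)\setminus V(H)$ are both nonempty and $G$ is $3$‑edge‑connected, the cut $E(X,V(H))$ has at least three edges; in particular $H$ is nonempty, and if $|V(H)|=1$ that one vertex carries all of $E(X,V(H))$ and the trivial path suffices (the ``moreover'' clause then being immediate). So assume $|V(H)|\ge2$ and induct on $|V(H)|$. If $H$ has a bridge $c=c_1c_2$, let $H_1,H_2$ be the components of $H-c$ with $c_i\in V(H_i)$; $3$‑edge‑connectivity of $G$ forces at least two edges from each $V(H_i)$ to $X$. Let $v_f$ be the end of $f$ in $V(H)$ and, after possibly swapping, assume $v_f\in V(H_1)$. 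Then $H_2$ is a component of $G-(X\cup V(H_1))$ and $c\in E(X\cup V(H_1),V(H_2))$, so we may apply the lemma inductively to $(G,\ X':=X\cup V(H_1),\ H_2,\ f':=c)$. As $c$ is the only edge between $V(H_2)$ and $V(H_1)$, we get $E(y_i,X')\setminus\{c\}=E(y_i,X)$, these edges avoid $f$ (which meets $v_f\notin V(H_2)$), interiors avoiding $X'$ avoid $X$, and $H-E(P)=H_1\cup\{c\}\cup(H_2-E(P))$ is connected because $c\notin E(P)\subseteq E(H_2)$. (When the ``moreover'' clause applies, $H$ is bridgeless and this case does not occur.)

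So suppose $H$ is $2$‑edge‑connected. Then every vertex of $H$ meeting $E(X,V(H))$ has $H$‑degree exactly $2$, hence a unique edge to $X$; call these $m\ (\ge3)$ vertices the \emph{terminals}, one of them, $v_f$, carrying $f$. Build a cubic graph $\hat G$ from $G$ by contracting $V(G)\setminus V(H)$ to one vertex $z$ (deleting loops) and then expanding $z$ into a cycle $Z=z_1z_2\cdots z_m$ so that each $z_i$ inherits exactly one edge of $E(X,V(H))$, arranged so that $f$ becomes $z_1v_f$. A brief case check (using that $H$ minus any single edge is connected and that $G$ is $3$‑edge‑connected) shows $\hat G$ is cubic and $3$‑edge‑connected. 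Apply Theorem~\ref{tutteperiphcubic} to the edges $z_2z_3$ and $z_2t_2$ at $z_2$, where $t_2$ is the terminal attached to $z_2$, to obtain a peripheral cycle $C$. Since $C$ uses $z_2t_2$ it meets $V(H)$, and since it uses $z_2z_3$ it is neither $Z$ nor contained in $H$; hence $C\cap Z$ is a single consecutive arc $A$ and $C=P\cup\{z_2t_2,\,z_bt_b\}\cup A$ (as edge sets) with $P\subseteq H$ a path joining the terminals $t_2,t_b$ attached to $z_2,z_b$. The key point is that the third edge $z_1z_2$ at $z_2$ cannot be a chord of the induced cycle $C$, so $z_1\notin V(C)$; therefore $A$ does not reach $z_1$, $C$ does not use $f$, and $t_2,t_b\neq v_f$. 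Consequently the edges $e_0,e_1$ of $E(X,V(H))$ at $t_2,t_b$ are distinct and differ from $f$.

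It remains to secure the peripherality of $P$ in $H$ and the ``clean interior'' condition. Because removing fewer edges preserves connectedness, every initial sub‑path of a peripheral path is peripheral, so we may replace $P$ by its shortest initial sub‑path ending at a terminal; then every internal vertex is a non‑terminal (hence has no neighbour in $X$), and the endpoints remain good terminals since no terminal on $P$ equals $v_f$. The one genuinely delicate step is transferring peripherality: a peripheral cycle in a cubic graph has peripheral edge‑set, so $\hat G-E(C)$, and hence $\hat G-E(P)$, is connected, and one must deduce that $H-E(P)=(\hat G-E(P))-\{z_1,\dots,z_m\}$ is connected. Using that $C$ is \emph{induced}, each vertex of $P$ retains a pendant edge of $H-E(P)$ running into $H-V(P)$; combined with connectedness of $\hat G-V(C)$, what has to be ruled out is that the vertices of $Z$ lying off the arc $A$ glue together otherwise‑separated parts of $H-V(P)$, and it is exactly here that the specific pair fed to Theorem~\ref{tutteperiphcubic} and the $2$‑edge‑connectivity of $H$ are used. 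For the ``moreover'' clause, given a prescribed terminal $y_0$ whose edge $e_0\neq f$ is attached to some $z_a$ with $a\neq1$, apply Theorem~\ref{tutteperiphcubic} instead to $z_at_a$ together with an incident cycle edge; this forces $y_0=t_a$ to be an endpoint of the extracted path, and the rest is unchanged. I expect this peripherality transfer to be the main obstacle: the $H$‑part $P$ of a peripheral cycle of the cubic cap $\hat G$ need not be peripheral in $H$ unless the cycle absorbs enough of the blow‑up $Z$, so the construction must be set up — and $2$‑edge‑connectivity exploited — precisely so that the off‑arc vertices of $Z$ cannot disconnect $H-V(P)$.
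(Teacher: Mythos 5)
Your reduction to the case where $H$ is $2$-edge-connected (peeling off a leaf block, or inducting over a bridge) matches the paper's first step and is sound. The problems are in the core of the argument. First, a structural point: in this paper Theorem~\ref{tutteperiphcubic} is \emph{deduced from} Lemma~\ref{getapath1} (see the short proof immediately after the lemma, which applies it with $X=\{x\}$), so invoking Theorem~\ref{tutteperiphcubic} to prove Lemma~\ref{getapath1} is circular within the paper's logical structure; you would have to import Tutte's peripheral-cycle theorem as an external black box, which the paper deliberately avoids by making this lemma self-contained.

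Second, and more seriously, the step you yourself flag as ``the main obstacle'' is a genuine gap, and it is precisely the content of the lemma. Peripherality of $C$ in the capped graph $\hat G$ only gives connectivity of $\hat G - E(C)$, and there the components of $H-E(P)$ are permitted to be reconnected through the off-arc vertices of $Z$ and their pendant terminal edges. Concretely, $P$ may split $H$ into components $F_1,F_2$, each containing a terminal whose $Z$-vertex lies outside the arc $A$; then $\hat G-E(C)$ is connected while $H-E(P)$ is not, so $P$ fails to be peripheral in $H$. Nothing in your choice of the edge pair at $z_2$, nor any obvious choice of the cyclic ordering of $Z$, rules this out. (A smaller unjustified claim: that $C\cap Z$ is a single arc --- an induced nonseparating cycle of $\hat G$ could a priori use two disjoint arcs of $Z$ joined by two paths of $H$.) The paper proves peripherality directly by an extremal choice: among paths in $H'-z'$ from $y_0$ to $Y$, it maximizes the size of the component of $H'-E(P)$ containing $z'$, and subject to that the lexicographic ordering of the remaining component sizes; a smallest bad component $F$ then either yields a rerouting of $P$ that improves the objective, or is attached to the rest of $G$ by at most two edges, contradicting $3$-edge-connectivity of $G$ (with $2$-edge-connectivity of $H'$ used to exclude $F$ meeting an end of $P$). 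Some argument of this kind, or an equally substantive replacement, is required where your proposal currently offers only the expectation that the construction ``must be set up \ldots\ precisely so that the off-arc vertices of $Z$ cannot disconnect $H-V(P)$.''
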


\begin{proof}
  If $H$ is a single vertex, then the result is obviously true.
  If $H$ is not 2-edge-connected, then consider its block structure. 
  If one of the leaf blocks of $H$ is just a cut-edge $vy$, where $y$ has degree 1 in $H$ and is not incident with $f$, then we take $y_0=y_1=y$ and $P=y$, and the result follows. 
  Otherwise, we let $H'$ be a leaf block that is not incident with~$f$. Further, we let $f'$ be the only cut-edge of~$H$ incident with~$H'$.
  We extend $X$ to $X' = X \cup (V(H) \setminus V(H'))$. If we find a path~$P$ for $X'$, $H'$ and~$f'$, then the same path works for $X$, $H$ and $f$ 
  because $f'$ is the only edge in $E(H',X') \setminus E(H,X)$ and the path~$P$ does not contain the vertex $z'$ (the end of~$f'$ in~$H'$). 
  We define $Y = \{ y \in V(H') \mid E(y,X') \setminus \{f'\} \neq \emptyset \}$ and pick arbitrary $y_0 \in Y$.

  If, on the other hand, $H$ was 2-edge-connected (and $y_0$ was specified), then we put $H' = H$, $X' = X$ and $f' = f$; we write $f' = x'z'$ with $x' \in X'$.
  In both cases we now have $H'$ 2-edge-connected and we want to find a peripheral path~$P$ in it with one end specified. Note that since $H'$ is 2-connected, the end $z'$ of $f'$ in $H'$ is not in $Y$, and by the third property it should not be on the path.

  As we have dealt with the case of $H'$ being a single vertex, we now have the useful property that each vertex of~$H'$ has at most one incident edge going to~$X'$.
  Choose a path $P \subseteq H' - z'$ starting at~$y_0$ with the other end in $Y \setminus \{y_0\}$,
  subject to the following conditions:

\begin{enumerate}[label=(\roman*)]
  \item The component of $H' - E(P)$ containing $z'$ has maximum size.
  \item Subject to (i), the lexicographic ordering of the sizes of the components of $H' - E(P)$ not containing $z'$ is maximum (i.e., the largest component not containing $z'$ has maximum size, and subject to this
    the second largest has maximum size, and so on).
\end{enumerate}

  The assumption that $G$ is 3-edge-connected implies that $|Y| \ge 2$, and as $H'$ is 2-edge-connected, there exists some path~$P$. Let $y_1$ be the other end of $P$.
  Note that our path $P$ chosen according to the above criteria has no interior vertices in $Y$, otherwise we may take a subpath.
  We claim that $P$ is peripheral in $H'$.  Suppose (for a contradiction) that this is not the case and
  let $F$ be a component of $H' - E(P)$ such that $z' \notin V(F)$ and $F$~is of minimum possible size.
  Define $P'$ to be the minimal subpath of~$P$ that contains all vertices of $V(F) \cap V(P)$.

  Suppose there exists a vertex $q \in V(P')$ that is not contained in $V(F)$.
  In this case we may choose a path $P^* \subseteq F$ with the same ends as $P'$.
  As $P^*$ avoids~$q$, modifying our original path $P$ by replacing the subpath $P'$ with $P^*$ gives us a path that contradicts the choice of~$P$: the component containing~$q$ increases, 
  all others except~$F$ stay the same or increase. 

  Thus we must have $V(P') \subseteq V(F)$. 
  If $P'$~shares an end with~$P$ then $F$ is connected to the rest of~$H'$ by a single edge, a contradiction with 2-edge-connectivity of~$H'$.

  Suppose next, there exists a vertex $w \in Y \cap V(F)$. By now we know, that $w$ is not an end of $P$. We choose a path $Q \subseteq F$ with one end $w$ and the other one the first vertex of $P'$; 
  note that $z' \notin V(Q)$ by the choice of $P$ and~$F$.
  Now $P \cup Q$ contains a path $\tilde P$ that contradicts the choice of $P$ relative to (i) or~(ii): components of~$H'-E(\tilde P)$ are larger than or equal to
  the corresponding components of~$H' - E(P)$, except for~$F$, which was the least significant in our selection process
  and the component containing~$y_1$ becomes strictly larger.  
  Therefore, no such vertex~$w$ can exist.  

  It follows that $F$~is connected to the rest of~$G$ by only two edges, a contradiction with 3-edge-connectivity of~$G$.
  We deduce that $P$ is peripheral in $H'$, and this completes the proof.
\end{proof}

Before we prove our strong form of the above result let us pause to prove Tutte's peripheral cycles theorem for cubic graphs using Lemma \ref{getapath1}.

\begin{proof}[Proof of Theorem~\ref{tutteperiphcubic}]
  Let $x$ be the vertex incident with both prescribed edges $e_0,e_1$.
  Let $X = \{x\}$, let $f \in \delta(x)\setminus \{e_0,e_1\}$ and apply Lemma~\ref{getapath1} for the set $X$ and the edge $f$.
  This gives us a peripheral path $P$ of $G - x$. The cycle $C$ formed by adding the vertex~$x$ and
  the edges $\{e_0,e_1\}$ is a peripheral cycle in $G$.
\end{proof}

Next we establish a stronger version of the above lemma that will provide us with some choice in our basic process.
This is a key ingredient for us in proving the existence of many flows in the group $\Z_6$.
The proof has similar basic structure as the proof of Lemma~\ref{getapath1}; with a few more subtleties -- including using Lemma~\ref{getapath1} in one of the steps.
We recall that if $P$ is a path containing vertices $a$, $b$, then $aPb$ denotes the subpath of~$P$ from~$a$ to~$b$.

\begin{lemma}
\label{getapath2}
Let $G = (V,E)$ be a cyclically 4-edge-connected cubic graph, let $X \subset V$ have $|X| \ge 2$, let $H$ be a component of $G-X$, and let $f =xz \in E$ have $x \in X$ and $z \in V(H)$.
If $| E(v,X) \setminus \{f\}| \le 1$ for every $v \in V(H)$, then there exist distinct vertices $y, y_1, y_2 \in V(H)\setminus \{z\}$ such that $E(y,X) \neq \emptyset$ and $E(y_i, X) \neq \emptyset$ for $i=1,2$, and for $i=1,2$ there exists a path $P_i \subseteq H-z$ with ends $y$ and $y_i$ that is peripheral in $H$ and contains no internal vertices with a neighbor in $X$. Moreover, the edge of $P_1$ incident with $y$ is distinct from the edge of $P_2$ incident with $y$.
\end{lemma}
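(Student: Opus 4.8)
The plan is to mimic the proof of Lemma~\ref{getapath1}: first reduce to the case where $H$ is $2$-edge-connected by peeling off leaf blocks, then run an extremal argument on the pieces left behind when the edges of the constructed paths are deleted. The new feature is that we must build two peripheral paths sharing the endpoint $y$ and using different edges at $y$; we handle this by constructing them together as a single path $P_1\cup P_2$ with $y$ an \emph{interior} vertex, and using that $H-E(P_i)\supseteq H-E(P_1\cup P_2)$, so it suffices to make the union peripheral in $H$.

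\textbf{Reduction to $2$-edge-connectivity.} If $H$ is not $2$-edge-connected, pass to a leaf block $H'$ of $H$ avoiding $f$, with cut-edge $f'=x'z'$ and $z'\in V(H')$, and replace $X$ by $X'=X\cup(V(H)\setminus V(H'))$. As in Lemma~\ref{getapath1}, $(G,X',H',f')$ still satisfies all hypotheses, $f'$ is the only edge of $E(V(H'),X')$ not in $E(V(H),X)$, and a solution for $(G,X',H',f')$ is one for $(G,X,H,f)$ because the vertices $y,y_1,y_2$ and both paths avoid $z'$ and $z$. Degenerate leaf blocks that cannot carry the desired structure are either ruled out or avoided by choosing a different leaf block, here using $|X|\ge2$. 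So assume $H$ is $2$-edge-connected and write $f=xz$. Then $z$ has exactly two edges inside $H$; setting $Y=\{v\in V(H)\setminus\{z\}:E(v,X)\ne\emptyset\}$, each $v\in Y$ has two edges inside $H$ and one to $X$, and every other vertex of $H$ has all three edges inside $H$. A short argument — using that $G$ is cyclically $4$-edge-connected, that $H$ contains a cycle, and that $|X|\ge2$ — shows $|E(V(H),X)|\ge4$, hence $|Y|\ge3$.

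\textbf{Extremal choice.} Consider all configurations given by two internally disjoint paths $P_1,P_2\subseteq H-z$ starting at a common vertex $y\in Y$ with distinct edges at $y$ and ending at distinct vertices $y_1,y_2\in Y\setminus\{y\}$; such configurations exist because $|Y|\ge3$ and $H-v$ is connected for every degree-$2$ vertex $v$ of the $2$-edge-connected graph $H$. Among these, choose one making the component of $H-E(P_1\cup P_2)$ that contains $z$ as large as possible and, subject to that, making the sorted sizes of the remaining components lexicographically largest — the selection rule of Lemma~\ref{getapath1}. If some interior vertex of $P_1$ or $P_2$ had a neighbour in $X$, truncating that path there only enlarges components, contradicting the choice; so the vertices of $P_1\cup P_2$ having a neighbour in $X$ are exactly $y_1,y,y_2$. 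Hence it remains only to show that $P_1\cup P_2$ is peripheral in $H$: then $(P_1,P_2)$ together with $(y,y_1,y_2)$ witnesses the lemma.

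\textbf{Peripherality.} Suppose $H-E(P_1\cup P_2)$ is disconnected and let $F$ be a component not containing $z$, of minimum order; let $P'$ be the shortest subpath of $P_1\cup P_2$ meeting $V(F)$. As in Lemma~\ref{getapath1}, argue in turn that $V(P')\subseteq V(F)$ (else re-route $P'$ through $F$ to gain a vertex in a more significant component), that $P'$ shares no end with $P_1\cup P_2$ (else $F$ attaches by a single edge, contradicting $2$-edge-connectivity of $H$), that $V(F)\cap Y=\emptyset$ (else re-route to enlarge the $z$-component or a $y_i$-component), and finally that $F$ attaches to the rest of $G$ by only two edges, contradicting $3$-edge-connectivity. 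The one genuinely new situation — and the main obstacle — is that $P'$ may contain the center $y$, so the re-routing has to be performed on a path passing through $y$; to produce the replacement piece inside $F$ while avoiding a prescribed vertex we invoke Lemma~\ref{getapath1} applied to $F$ with $X$ enlarged by $V(P_1\cup P_2)\setminus V(F)$. Checking that all the component-size comparisons still go the right way in that case is the heart of the proof.
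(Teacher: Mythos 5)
Your outline diverges from the paper's argument in a way that leaves a genuine gap at the decisive step. The lemma only asks for two paths $P_1,P_2$ that are \emph{each} peripheral in $H$ and that leave $y$ along different edges; it does not ask that they be internally disjoint or that $P_1\cup P_2$ be peripheral. You have chosen to prove the stronger statement that there is a single peripheral path $y_1$--$y$--$y_2$ with $y$ as an interior vertex, and the whole weight of that choice lands exactly where you defer it: in the rerouting step of the extremal argument. When $H'-E(P_1\cup P_2)$ has a bad component $F$ and the minimal subpath $P'$ to be replaced contains the centre $y$ (or more generally when the detour $P^*\subseteq F$ cannot be routed through $y$ using the two prescribed edges at $y$), the rerouted walk no longer passes through $y$ with distinct first edges, so it is not a member of your comparison class and the lexicographic comparison is meaningless. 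Saying that one should "invoke Lemma~\ref{getapath1} applied to $F$" and that "checking that all the component-size comparisons still go the right way \dots is the heart of the proof" is precisely the part that needs to be done and is not done; it is not clear that it can be done, nor is it clear that the stronger statement (a peripheral path with an interior vertex in $Y$) is even true in general. There are also smaller unverified points: the existence of an initial configuration needs two internally disjoint paths from $y$ to $Y\setminus\{y\}$ inside $H-z$, and $H-z$ need not be $2$-connected; and you work with $|Y|\ge 3$ where the paper needs and proves $|Y|\ge 4$ from cyclic $4$-edge-connectivity.

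For comparison, the paper never tries to make the two paths disjoint or their union peripheral. It defines a \emph{good} path (peripheral, ends in $Y$, no interior vertex in $Y$), lets $S$ be the set of edges at vertices of $Y'$ lying on good paths, and assumes for contradiction that each vertex of $Y'$ meets exactly one edge of $S$. A separate combinatorial claim (via "obliging" cycles) produces a path $Q$ from $z'$ through $y$ to $y'$ whose $y$-end edge on $yQy'$ avoids $S$; the extremal search is then run in $H'-E(Q')$ with the component containing $Q'$ maximized, which forces the new good path to leave $y$ along its non-$S$ edge. That device is what replaces the rerouting-through-$y$ step you would need, and your proposal has no substitute for it.
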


\begin{proof}
  As in the proof of Lemma~\ref{getapath1}, we first suppose that the graph $H$ has a cut-edge, and therefore a nontrivial block structure.
  In this case, choose $H'$ to be a leaf block of~$H$ that does not contain $z$.
  The condition that $| E(v,X) \setminus \{f\}| \le 1$ for every $v \in V(H)$ implies that $H'$ is nonempty, is not just a vertex, and has no vertices of degree 1 except possibly $z$. This implies that $H'$ is 2-connected. 
  Let $z' \in V(H')$ be the unique vertex of~$H'$ incident with a cut-edge of $H$ and let $X' = X \cup (V(H) \setminus V(H'))$.
  In the case that our graph $H$ has no cut-edge, then we set $H' = H$, set $z' = z$, and set $X' = X$.
  Observe that to complete the proof of the lemma, it suffices to solve the problem with $H'$, $z'$, $X'$ in place of $H$, $z$, and $X$.
  This adjustment has granted us the useful property that $H'$ is 2-connected.

  Set $Y = \{ y \in V(H') \mid E(y,X') \neq \emptyset \}$ and note that $|Y| \ge 4$ as neither $X'$ nor~$H'$ can be a single vertex and $G$ is cyclically 4-edge-connected. Also, let $Y' = Y\setminus \{z'\}$. 
  Declare a nontrivial path $P \subseteq H' - z'$ to be \emph{good} if $P$ is peripheral in~$H'$, both ends of~$P$ are in~$Y$, and no interior vertex of~$P$ is in~$Y$. 
  Let $S \subseteq E(H')$ be the set of edges incident with a vertex in~$Y'$ and contained in a good path.
  Lemma~\ref{getapath1} gives us a good path starting at any vertex of~$Y'$, thus any such vertex is incident with at least one edge in~$S$.
  To complete the proof it suffices to prove that there is a vertex in~$Y'$ incident with two such edges.
  Accordingly, we now assume (for a contradiction) that every vertex in~$Y'$ is incident with precisely one edge in~$S$.

\medskip

\noindent{\it Claim: } There exists a path $Q \subseteq H'$ with ends $z',y'$ and interior vertex $y$ such that $y,y' \in Y'$ and the edge of $yQy'$ that is incident with $y$ is not in $S$.

\smallskip

\noindent{\it Proof of the claim:}
   Call a cycle $C \subseteq H'$ \emph{obliging} if it contains distinct vertices $y,y' \in Y'$ with the property
   that one of the two paths in $C$ with ends $y,y'$ contains the edge of $S$ incident with $y$, and the other path contains the edge in $S$ incident with $y'$.
   If $C$ is obliging, we may choose a (possibly trivial) path from $z'$ to $V(C)$ and this path together with $C$ will contain a path satisfying the claim.  Thus we may assume no cycle is
   obliging.  Note that this implies that every cycle contains at most two  vertices of $Y'$.
   Choose a cycle $C$ containing two distinct vertices, say $y_1, y_2 \in Y'$  and then choose $y_3 \in Y' \setminus V(C)$.  Since $H'$ is 2-connected, we may
   choose a  path $P_3$ internally disjoint from $C$ so that both ends of $P_3$ are in $V(C)$ and $y_3$ is an internal vertex of $P_3$.
   Let $w,w'$ be the ends of $P_3$ and for $i=1,2$ let $P_i$ be the path of $C$ with ends $w,w'$ that contains $y_i$.
   Now we must have $C \cup P_3 = P_1 \cup P_2 \cup P_3$ or cycle $P_1 \cup P_3$ contains three vertices~$y_1$, $y_2$,~$y_3$ of~$Y'$.
   Moreover, since there is no obliging cycle, by possibly interchanging $w$ and~$w'$ we may assume that $wP_iy_i$ avoids~$S$ for $i=1,2,3$.
   Finally, choose a (possibly trivial) path of $H'$ from $z'$ to $V(P_1 \cup P_2 \cup P_3)$ and observe that this path together with $P_1 \cup P_2 \cup P_3$ contains a path satisfying the claim.~$~\Box$

\medskip

  Now apply the claim to choose a path $Q$ and vertices $y,y'$.  Let $Q' = z'Qy$ and note that the unique edge of~$S$ incident with~$y$ is contained in~$Q'$.
  Now we will take advantage of~$Q'$ to construct another good path.
  Thanks to the presence of the path $Q$ we may choose a path $P \subseteq H' - E(Q')$ so that $P$ has $y$ as one end and the other end in $Y' \setminus \{y\}$ and subject to this we choose $P$ so that:
\begin{enumerate}[label=(\roman*)]
  \item The component of $H' - E(P)$ containing $Q'$ has maximum size.
  \item Subject to (i), the lexicographic ordering of the sizes of the components of $H' - E(P)$ not containing~$Q'$ is maximum (i.e., the largest component not containing~$Q'$ has maximum size,  subject to this
    the second largest has maximum size, and so on).
\end{enumerate}
We claim that the resulting path $P$ will be good.  Suppose otherwise and let $F$ be the smallest component of $H' - E(P)$ not containing $Q'$.  Note that $F$ cannot be an isolated vertex,
   since that would be an interior vertex of~$P$ in~$Y'$ -- and we could shorten $P$ to end at this vertex, improving our criteria.

   Let $P'$ be the minimal subpath of~$P$ containing all vertices of~$F\cap V(P)$.
   If there is another component of $H' - E(P)$ containing a vertex in $V(P')$ then
   we may choose a path $P^* \subseteq F$ with the same ends as $P'$ and modify $P$ by replacing the subpath $P'$ by $P^*$ to obtain a path superior to $P$ thus contradicting our choice.
   (Note that $P^*$~is disjoint from~$Q'$ by the choice of~$F$.)
   Thus, all vertices in $P'$ belong to $F$.
   %If $P'$ contains an end vertex of~$P$, there is just a single edge of $H'$ separating $V(F)$ from its complement in $H'$ and this contradicts our connectivity assumptions. Since the graph $G$~is 3-edge-connected, 
   There must exist a vertex $u \in Y \cap V(F)$. 
   Otherwise, the two edges incident with the ends of $P'$ that are not in $P'\cup F$ would form a 2-edge-cut in $G$.
   Now we may choose a path from $V(P')$
   to $u$ and reroute $P$ using this path.  This will result in a path superior to $P$ thus contradicting our choice.   This proves that $H' - E(P)$ is connected, so $P$ is a good path.  This gives us a
   contradiction, since now both edges of~$H'$ incident with $y$ are contained in~$S$.  This completes the proof.
\end{proof}

Our last lemma provides a technical property that we will use to control the behaviour of our process.

\begin{lemma}
\label{nottoolong}
  Let $G = (V,E)$ be a 3-edge-connected cubic graph, and let $X \subseteq V$ have $G[X]$ connected.  Let $H$ be a component of $G - X$ and let $P \subseteq H$ be a nontrivial path with ends $y_1,y_2$.  If $P$ is
  peripheral in $H$, $E(y_i,X) \neq \emptyset$ for $i=1,2$, and $E(y,X) = \emptyset$ for all other vertices $y\in V(P)$, then there exists a peripheral cycle $C$ of $G$ with $C \cap H = P$.
\end{lemma}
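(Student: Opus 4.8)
The plan is to build the desired peripheral cycle $C$ by taking the path $P$ inside $H$ and closing it up with a path through the ``outside'' of $H$, namely through $X$ together with the other components of $G - X$. Concretely, let $y_1, y_2$ be the ends of $P$, let $e_i \in E(y_i, X)$ for $i = 1, 2$ (these exist by hypothesis), and let $x_i \in X$ be the other end of $e_i$. I would form an auxiliary graph $G'$ on vertex set $(V \setminus V(H)) \cup \{y_1, y_2\}$ by deleting all interior vertices of $P$ (and all of $V(H) \setminus V(P)$) from $G$, keeping $e_1, e_2$ but discarding the edges of $P$ itself. Then $C$ will consist of $P$ plus a suitable $y_1$--$y_2$ path $Q$ in $G' - \{y_1y_2 \text{ path through } H\}$; equivalently, $Q$ runs from $x_1$ to $x_2$ through $G - V(H)$, augmented by $e_1, e_2$.

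The key point is that $Q$ should be chosen to be an \emph{induced} path whose removal (together with the interior of $P$) keeps $G$ connected, so that the resulting cycle $C = P \cup Q$ is induced and $G - V(C)$ is connected. Here I would lean on the structure we already have. First, since $G[X]$ is connected and $H$ is a component of $G - X$, removing $V(H)$ from $G$ leaves a connected graph (the other components of $G-X$ attach to $X$, and $G$ is $3$-edge-connected so each has at least three edges to $X$; hence deleting $V(H)$ cannot disconnect anything). So $G - V(H)$ is connected and contains $x_1, x_2$. I would then invoke Lemma~\ref{getapath1}, or a variant of its argument, applied inside the graph $G - V(H)$ with the roles of the boundary played by $\{y_1, y_2\}$ (thought of as external vertices attached via $e_1, e_2$): this produces a peripheral path in $G - V(H)$ between $x_1$ and $x_2$, i.e.\ one whose removal keeps the rest connected. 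Adjoining $e_1, e_2$ and then $P$ closes this into a cycle $C$ with $C \cap H = P$ by construction, since $Q$ avoids all of $V(H)$.

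It remains to check that $C$ is induced in $G$ and that $G - V(C)$ is connected. For inducedness: $P$ is induced in $H$ by hypothesis; $Q$ is induced in $G - V(H)$; and there are no chords between $V(P)$ and $V(Q)$ except $e_1, e_2$ themselves, because the only vertices of $P$ with a neighbour in $X$ (and hence outside $H$) are $y_1$ and $y_2$, and $G$ is cubic so $y_1, y_2$ each have exactly one edge leaving $H$ — namely $e_1$ and $e_2$. For connectedness of $G - V(C)$: the interior vertices of $P$ form part of the connected graph $H - (\text{ends of }P)$, which is connected because $P$ is peripheral in $H$; the rest of $V(H)$ lies in $H$ too; and $(G - V(H)) - V(Q)$ is connected because $Q$ was chosen peripheral there; finally these two pieces are joined because $G$ is $3$-edge-connected and $H$ has at least three edges to $X$, of which at most $|\{e_1,e_2\}| = 2$ are incident with $C$.

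\textbf{Main obstacle.} The delicate step is running Lemma~\ref{getapath1} (or reproving its peripheral-path argument) in the auxiliary setting where $y_1, y_2$ are treated as prescribed attachment points and where we must land on the specific vertices $x_1, x_2$ with the right incident edges — the existing statement of Lemma~\ref{getapath1} only prescribes one endpoint and only under a $2$-edge-connectivity hypothesis on the relevant component, so some care is needed to either reduce to that case (via the block-structure trimming used in its proof) or to redo the extremal-path selection argument directly for $G - V(H)$. Once the peripheral path $Q$ is produced with the correct endpoints, assembling $C$ and verifying the two defining properties of a peripheral cycle is routine given the cubic, $3$-edge-connected hypotheses.
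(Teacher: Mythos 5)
Your overall plan coincides with the paper's: close $P$ into a cycle by an $x_1$--$x_2$ path $Q$ that avoids the edges of $H$ and is itself peripheral, then note that $H - E(P)$ and the rest of $G - E(C)$ are each connected and are joined to one another because $|E(V(H),X)| \ge 3 > 2$. But the step you flag as the ``main obstacle'' is the entire content of the lemma, and you have not closed it. In particular, Lemma~\ref{getapath1} cannot be made to deliver $Q$: even its strengthened form prescribes only \emph{one} endpoint of the peripheral path (and only under a $2$-edge-connectivity hypothesis on the relevant block), whereas here both $x_1$ and $x_2$ are forced. No block-structure trimming converts a two-prescribed-endpoints problem into a one-prescribed-endpoint one, so of the two escape routes you mention, only ``redo the extremal-path argument'' is viable --- and that argument is the proof.

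For the record, here is how the paper carries it out. Since $G[X]$ is connected there is an $x_1$--$x_2$ path $Q$ with $E(Q)\cap E(H)=\emptyset$; among all such paths choose $Q$ maximizing first the size of the component of $G-E(Q)$ containing $E(H)$, and then, lexicographically, the sizes of the remaining components. Because $E(H)\cap E(Q)=\emptyset$ and the edges $x_iy_i$ are not in $Q$, all of $V(H)$ together with $x_1,x_2$ lies in a single component of $G-E(Q)$. If some other component $F$ exists, take the minimal subpath $Q'$ of $Q$ containing $V(F)\cap V(Q)$: if $V(Q')\subseteq V(F)$, then (as $x_1,x_2\notin V(F)$) only the two $Q$-edges at the ends of $Q'$ leave $V(F)$, contradicting $3$-edge-connectivity; otherwise rerouting $Q'$ through a path $Q^*\subseteq F$ with the same ends strictly improves $Q$, again a contradiction. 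Hence $Q$ is peripheral and $C=P\cup Q\cup\{x_1y_1,x_2y_2\}$ does the job. Two smaller points: the paper's notion of peripheral for subcubic graphs is that $G-E(C)$ is connected, so you need not insist that $Q$ be induced or that $G-V(C)$ be connected; and the paper allows $Q$ to pass through vertices of $H$ provided it avoids $E(H)$, although your restriction to $G-V(H)$ would also be workable since any offending component $F$ is automatically disjoint from $V(H)$.
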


\begin{proof}
  For $i=1,2$ let $x_i y_i \in E(y_i,X)$.  By assumption there exists a path in $G$ from $x_1$ to $x_2$, say $Q$, so that $E(Q) \cap E(H) = \emptyset$.  Among all such paths, choose one so that:
\begin{enumerate}[label=(\roman*)]
  \item The component of $G - E(Q)$ containing $E(H)$ has maximum size.
  \item Subject to (i), the lexicographic ordering of the sizes of the components of $G - E(Q)$ not containing $E(H)$ is maximum.
\end{enumerate}
Suppose (for a contradiction) that $Q$ is not peripheral and let $F$ be a minimum size component of $G - E(Q)$ not containing $H$.
  If $Q'$ is the minimum subpath of $Q$ containing all vertices in $F$, then $V(Q') \not\subseteq V(F)$ as otherwise $G$ would have just two edges between $V(F)$ and the other vertices.
  However we may then choose a path $Q^* \subseteq F$ with the same ends as $Q'$ and then modifying $Q$ by replacing the subpath $Q'$ with $Q^*$ gives us an improvement to $Q$.
  Therefore, our chosen path~$Q$ is peripheral. Moreover, the path $P$~is peripheral in~$H$ and $G$ is 3-edge-connected.
  It follows that the cycle $C$ consisting of $P \cup Q$ together with the edges $x_1 y_1$ and $x_2 y_2$ is peripheral in $G$ and this completes the proof.
\end{proof}

%%%%%%%%%%%%%%%%%%%%%%%%%%%%%%%%%%%%%%%%%%%%%%%%%%%%%%%%%%%%%%%%%%%%%%%%%%%%%%%%%%%%%%%%%%
\section{Peripheral 2-bases}
\label{sec:7}

Jaeger, Linial, Payan, and Tarsi \cite{JLPT} found an alternative proof of Seymour's 1-base and 2-base decomposition theorem.
Their theorem is slightly sharper than Seymour's in that it saves a vertex (a feature we will need). 
Recall that an edge-set $F \subseteq E(G)$ being peripheral in the graph $G$ means that $G-F$ is connected, in other words the set $E-F$ contains edge-set of some spanning tree of~$G$.

\begin{theorem}[Jaeger, Linial, Payan, and Tarsi \cite{JLPT}]\label{thmJLPT}
If $G$ is a graph obtained from a 3-connected cubic graph by deleting a single vertex then $G$ has a peripheral 2-base.
\end{theorem}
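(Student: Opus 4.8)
My plan is to prove Theorem~\ref{thmJLPT} by running Seymour's iterative construction of a $1$-base/$2$-base partition (the second part of Theorem~\ref{seymourdecomp}), now powered by the strengthened peripheral-path lemmas of Section~\ref{sec:6}, in a way that keeps the deleted vertex out of the resulting $2$-base so that a vertex is saved. First the setup: write $G = G_0 - v$ with $G_0$ a $3$-connected cubic graph, so $v$ has three distinct neighbors $x_1,x_2,x_3$; these are exactly the degree-$2$ vertices of $G$, every other vertex of $G$ has degree $3$, and $G$ is $2$-edge-connected (a bridge of $G$ would force $v$ to send at least two edges to each of its sides, impossible since $v$ has degree $3$). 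We must produce an edge-set $B\subseteq E(G)$ with $\langle B\rangle_2 = E(G)$ and $G-B$ connected; equivalently a partition $E(G)=T\sqcup B$ in which $(V(G),T)$ is connected and spanning and $B$ is a $2$-base.

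I would build such a partition of $E(G_0)$ by growing a connected vertex set $X$ that always contains $v$: start from a peripheral cycle $C_0$ of $G_0$ through the edges $vx_1$ and $vx_2$ (Theorem~\ref{tutteperiphcubic}), and iteratively enlarge $X$. Throughout, all edges inside $G_0[X]$ are classified into a $T$-part and a $B$-part, and I would maintain the invariants: (a) the $T$-part is connected and spanning on $X$ and remains connected after deleting $v$; (b) all three edges at $v$ lie in the $T$-part; and (c) there is an ordering $t_1,\dots,t_r$ of the $T$-edges inside $G_0[X]$ such that each $t_j$ lies on a cycle of $G$ meeting $\{t_{j+1},\dots,t_r\}$ in at most one edge (``$B$ is $2$-reachable inside $G_0[X]$ with the not-yet-processed edges available for free''). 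When $X=V(G_0)$, invariant~(c) with no outside edges left says precisely that $B:=$ the $B$-part is a $2$-base of $G$, and invariants~(a),(b) say $G-B$ is connected; since $B$ avoids $v$'s edges by~(b), this is exactly a peripheral $2$-base of $G$.

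For the enlargement step, with $X$ still proper, pick a component $H$ of $G_0-X$ and an edge $f\in E(X,V(H))$; Lemma~\ref{getapath1} produces a peripheral path $P$ of $H$ with ends $y_0,y_1$, distinct attaching edges $e_0,e_1$ other than $f$, and no interior vertex adjacent to $X$, and Lemma~\ref{nottoolong} promotes the cycle formed by $P$, $e_0$, $e_1$ and a path of the $T$-part to a peripheral cycle of $G_0$ when that is needed. Set $X'=X\cup V(P)$ and classify the newly met edges (the edges of $P$, the attaching edges, any further $X$-to-$V(P)$ edges, and chords of $P$) so as to preserve all invariants: the default is to put $E(P)$ and one attaching edge into the $T$-part — this keeps $T$ connected and spanning on $X'$, because the interior of $P$ reaches $X$ along $P$ and then through that attaching edge — and the other attaching edge into the $B$-part, extending the ordering in~(c) by placing the new $T$-edges first (each edge of $P$ lies on a cycle whose other edges are either chords of $P$, hence in the $B$-part, or lie in $H-E(P)$, hence unprocessed, using that $H-E(P)$ is connected); but when some new edge is incident with $v$, or when a trivial path $P$ is forced, one adjusts these choices (putting more of the new edges into $T$) to keep~(a) and~(b). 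If $H$ disconnects along the way one recurses into each piece, and if $H$ is not $2$-edge-connected one passes to a leaf block of it, just as in the proof of Lemma~\ref{getapath1}.

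The main obstacle is maintaining invariant~(c) — the growing $2$-reachability of the $B$-part — in lockstep with keeping $T$ connected, spanning, and still connected after deleting $v$ (invariants (a),(b)): one cannot simply put every peripheral path entirely into $T$ or entirely into $B$, so the delicate work is the precise, case-dependent classification of the handful of edges introduced at each step — chords of $P$, vertices of $P$ whose third edge lands back on $P$, and above all the edges at $v$ — together with checking that the cycles witnessing~(c) really meet the remaining $T$-edges in at most one edge. A secondary, purely technical burden is the bookkeeping for degenerate configurations: outside components that are not $2$-edge-connected, multiple edges, and the three degree-$2$ vertices, each requiring a short separate argument.
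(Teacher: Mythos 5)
Your overall architecture (grow a connected vertex set $X$ by peripheral paths supplied by Lemma~\ref{getapath1}, classify the newly revealed edges, and keep the deleted vertex's edges on the tree side) is the right family of argument, but you have inverted the one assignment that makes the $2$-base verification go through. You place $E(P)$ and one attaching edge into the $T$-part and only the other attaching edge into the $B$-part; the paper's proof (Lemma~\ref{lem:7.2}) does the opposite: it sets $B_{i+1}=B_i\cup E(P)$, so the $2$-base consists of the initial peripheral cycle together with all the peripheral paths, and the attaching edges are what is left over for the tree. That orientation is what makes the closure argument work: the invariant maintained there is that $\langle B_i\rangle_2$ contains \emph{every} edge with both ends in $X_i$, so the cycle $e_0+P+e_1+(\mbox{any walk inside } G[X_i])$ certifies that the two new attaching edges enter the $2$-closure -- the walk inside $X_i$ costs nothing because all of $E(X_i)$ is already in the closure. (There is no need for the tree part to be connected and spanning on $X_i$ at intermediate stages; connectivity of the complement of $B$ is only needed at the end, and it falls out of the peripherality of the paths.)

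With your assignment the verification collapses at exactly the point you leave open. Your invariant (c) supplies witnessing cycles only for the edges of $P$ (cycles inside $H-E(P)$); it says nothing about the attaching edge $e_0=x_0y_0$ that you put into $T$, nor about the edges of the initial cycle $C_0$. Any cycle through $e_0$ must return to $x_0$ through $G_0[X]$, and under your ordering (``new edges first'') every previously placed $T$-edge on that return walk counts against the budget of one; the previously placed $T$-edges form a connected spanning subgraph of $X$ while the available $B$-edges are only about half as numerous, so in general no such walk using at most one of them exists (and it must also avoid $v$). A rigidity count shows how little slack there is: for $G=G_0-v$ every peripheral $2$-base has exactly $\tfrac{1}{2}|V(G_0)|-1$ edges and its complement is exactly a spanning tree, so your $B$ -- essentially one attaching edge per step plus chords -- is a sparse, near-matching set, and a cycle with at most two edges outside a near-matching is forced to be very short; nothing in your construction guarantees such cycles exist. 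The repair is to adopt the paper's orientation (paths into $B$, attaching edges into $T$, with the invariant $\langle B_i\rangle_2\supseteq E(X_i)$) and to handle the deleted vertex $r$ as the paper does: fix $f=rz\in\delta(r)$, start from a peripheral cycle through $r$ avoiding $f$, observe that $z$ can only enter as the final trivial path, and conclude that $B_t\setminus\delta(r)$ is a peripheral $2$-base of $G_0-r$.
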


The proof of the above theorem in \cite{JLPT} is based on an inductive approach applied to the class of graphs which are a single-vertex deletion from a cubic 3-connected graph.  For our purpose we will adopt a blend of these
ideas.  We will operate iteratively following Seymour, but we will save a vertex like Jaeger et al.  For any graph $G = (V,E)$ and $E' \subseteq E$ we let $V(E')$ denote the set of vertices of $G$ incident to some edge in $E'$.

\begin{lemma}
\label{lem:7.2}
For every 3-edge-connected cubic graph $G$, the following holds:
\begin{enumerate}
  \item If $C\subseteq G$ is a peripheral cycle, there exists a peripheral 2-base $B\subseteq E(G)$ with $E(C)\subseteq B$.
  \item For every $r \in V(G)$, the graph $G-r$ has at least three peripheral 2-bases.
\end{enumerate}
\end{lemma}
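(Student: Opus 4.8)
The plan is to build the peripheral 2-base iteratively, adding one peripheral cycle or path at a time, in the spirit of Seymour's original argument but saving a vertex as in Jaeger et al.\ by working in $G-r$. For part (1), I would start with $B_0 = E(C)$ and maintain the invariant that the current set $B_i$ is ``spanned'' by an increasing sequence of peripheral subgraphs: more precisely, I would track the vertex set $W_i = V(B_i)$ and the set of ``unfinished'' components of $G - W_i$, and show that as long as $B_i \neq E(G)$ I can find a peripheral path or cycle $P$ in some component $H$ of $G - W_i$ whose endpoints attach to $W_i$, such that adding $E(P)$ together with the connecting edges to $B_i$ keeps $B_i$ a candidate for a 2-base and strictly enlarges $W_i$. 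The tools for finding such $P$ are exactly Lemma~\ref{getapath1} (to produce a peripheral path in a component $H$ of $G - X$ with two distinct edges back to $X$, avoiding a prescribed edge $f$) and Lemma~\ref{nottoolong} (to upgrade that peripheral path in $H$ to a genuine peripheral cycle of $G$ meeting $H$ in exactly $P$). When this cycle $C'$ is added, the newly covered edges $E(C') \setminus B_i$ form a path in $H$, so they can be attached along the single cycle $C'$ — giving the $k=2$ closure step — and the endpoints of $P$ become newly spanned vertices.

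The key bookkeeping point is that the resulting $B = B_t = E(G)$ is a $2$-base \emph{and} peripheral. That it is a $2$-base follows because each step adds a set of new edges lying on a single cycle $C'$ with at most two edges outside the current set (the two ``connector'' edges $e_0,e_1$ are already in $B_i$ once their endpoints are spanned, so in fact the new edges are the interior edges of $P$, and $|E(C') \setminus B_i|$ is controlled): this is precisely the closure condition $\langle B \rangle_2 = E$ read backwards. That $G-r$ (hence also the full $B$ restricted appropriately) stays connected after deleting the complement is the ``peripheral'' requirement; here I would use that each $P$ chosen is peripheral in its host component $H$, so $H - E(P)$ stays connected, and the connectors $e_0,e_1$ keep the pieces glued to the part of $G$ already handled. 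To start the induction one needs a peripheral cycle through two prescribed edges at a vertex — that is Theorem~\ref{tutteperiphcubic} — which also gives part (1) its required cycle $C$ to contain.

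For part (2) I would apply part (1) three times with three different choices of peripheral cycle $C$ through $r$. Since $G$ is $3$-edge-connected cubic, $r$ has three incident edges $e_1,e_2,e_3$; by Theorem~\ref{tutteperiphcubic} each of the three pairs $\{e_1,e_2\}$, $\{e_2,e_3\}$, $\{e_1,e_3\}$ lies on a peripheral cycle $C_{ij}$ of $G$, and these three cycles are pairwise distinct (they use different pairs of edges at $r$). Deleting $r$ from each $C_{ij}$ leaves a peripheral \emph{path} $P_{ij}$ in $G-r$, and the peripheral 2-base $B_{ij}$ of $G$ containing $E(C_{ij})$ restricts to a peripheral 2-base of $G-r$ containing $P_{ij}$. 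The three sets $B_{ij} \setminus \{e_1,e_2,e_3\}$ are distinct because, for instance, $B_{12}$ contains the edges $e_1,e_2$ but — having been built to contain $C_{12}$ which avoids $e_3$ — can be arranged to exclude $e_3$, and symmetrically; so no two of them agree on $\delta(r)$. (One must check the closure in $G-r$ rather than in $G$, i.e.\ that dropping $r$ and its three edges does not spoil the $2$-closure of $B_{ij} \cap E(G-r)$ — this holds because every closure step used a cycle of $G$ disjoint from $r$ except possibly the very first, and the first step's cycle, having $r$ on it, contributes a peripheral \emph{path} in $G-r$ which still extends by the same argument.)

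\textbf{Main obstacle.} The delicate part is maintaining, across all iterations simultaneously, both the ``$2$-base'' property (each step must add edges lying on a single short cycle relative to the current set) and the ``peripheral'' property (the complement must always contain a spanning tree of $G-r$, i.e.\ each removed path must be peripheral in the right subgraph), while also ensuring the process terminates having covered \emph{every} edge rather than getting stuck with an uncovered $2$-edge-connected chunk. This is where Lemma~\ref{getapath1}'s guarantee of \emph{two distinct} connector edges $e_0,e_1$ and the ``no internal vertex sees $X$'' condition are essential — they are exactly what lets the induction reattach cleanly without creating a $2$-edge-cut — and verifying that these hypotheses are met at every stage (in particular that the relevant component $H$ and set $X$ always satisfy the $3$-edge-connectivity-derived inequalities) is the technical heart of the argument.
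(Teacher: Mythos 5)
Your plan for part (1) is essentially the paper's argument: grow nested sets $X_i, B_i$ from $B_0=E(C)$, use Lemma~\ref{getapath1} to extract a peripheral path $P$ in the unique component $H$ of $G-X_i$ with two distinct connector edges back to $X_i$, and maintain that $B_i$ is peripheral, that $G[X_i]$ and $G-X_i$ are connected, and that $\langle B_i\rangle_2$ contains $E(X_i)$. Two corrections on the details. In the paper only $E(P)$ is added to $B$; the connectors $e_0,e_1$ stay in the complement and are absorbed by the $2$-closure via the cycle formed by $P$, $e_0$, $e_1$ and a path inside $G[X_i]$ (which exists because $G[X_i]$ is kept connected) --- no appeal to Lemma~\ref{nottoolong} is needed, and that cycle need not be peripheral. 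If you instead put $e_0,e_1$ into $B$ as you propose, you risk disconnecting $G-B$ (a piece of $G[X_i]-B_i$ may be attached to the rest only through $e_0$ or $e_1$), and you lose the property that the complement is a spanning tree, which is exactly what part (2) and Lemma~\ref{manydecomps} need.

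The genuine gap is in part (2). You claim the three sets $B_{ij}\setminus\{e_1,e_2,e_3\}$ are distinct because ``no two of them agree on $\delta(r)$,'' but all three sets are disjoint from $\delta(r)$ by construction, so how the $B_{ij}$ behave on $\delta(r)$ tells you nothing about whether their restrictions to $E(G-r)$ coincide; a priori $B_{12}$ and $B_{13}$ could produce the same edge set once $\delta(r)$ is deleted. The distinguishing feature must live inside $G-r$, and producing it is where the work is: the paper runs the entire iterative process with a fixed forbidden edge $f=rz$ and shows that $z$ never lies on any selected path until its component of $G-X_i$ is the single vertex $z$, so the final $2$-base contains \emph{no} edge of $G-r$ incident with $z$, while it does contain edges of the initial cycle incident with the other two neighbours of $r$; varying $f$ over $\delta(r)$ then forces the three restrictions to be pairwise distinct. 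Your phrase ``can be arranged to exclude $e_3$'' concerns only $\delta(r)$ and in any case is an assertion of precisely what must be proved. Relatedly, your parenthetical about checking the $2$-closure in $G-r$ rather than in $G$ is a real issue, not a formality --- a peripheral $2$-base of $G$ does not automatically restrict to one of $G-r$ (the closure cycles may pass through $r$, and deleting $r$ from the connected graph $G-B$ may disconnect it) --- and the paper resolves it by the same device of tracking $f$ through the construction rather than by a separate patch.
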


\begin{proof}
Although the two parts to the lemma have slightly different inputs, we will prove both simultaneously.  In the first case, choose $f \in E$ to be an edge with exactly one endpoint in $V(C)$.  For the second case,
  let $f$ be an edge incident with $r$ and apply Theorem~\ref{tutteperiphcubic} to choose a peripheral cycle $C$ so that $r \in V(C)$ but $f \not\in E(C)$.  Now for both parts of the proof we will use the cycle
  $C$ and the edge $f$ and construct two sequences of nested subsets. The first one are nested edge-sets
  $B_0 \subseteq B_1 \subseteq \cdots \subseteq B_t \subseteq E$, the second one are nested vertex-sets
  $X_0 \subseteq X_1 \subseteq \cdots \subseteq X_t = V$, where $B_0 = E(C)$ and $X_0 = V(C)$.
For every $0 \le i \le t$ we will maintain the following properties:
\begin{enumerate}[label=(\roman*)]
    \item $B_i \subseteq E(X_i)$.
    \item $B_i$ is peripheral in $G$.
    \item The graphs $G[X_i]$ and $G - X_i$ are connected (or empty).
    \item $\langle B_i \rangle_2$ contains every edge with both ends in $X_i$.
\end{enumerate}
  Note that the initial sets $B_0$ and $X_0$ satisfy (i)--(iv) for $i=0$. Assuming $V(B_i) \neq V$ we form the next sets as follows: Apply Lemma~\ref{getapath1} to $G$ with the
  set $X_i$, the edge $f$, and $H$ the unique component of $G - X_i$.  If $P$ is the path selected by this lemma, we define $B_{i+1} = B_i \cup E(P)$ and $X_{i+1} = X_i \cup V(P)$.  Observe that all four of the
  above properties are still satisfied.  We continue this process until $X_t = V$ at which point the set $B_t$ is a peripheral 2-base.  This finishes the proof of the first part of the lemma.  To complete the
  proof of the second part we will investigate the behaviour of the edge $f$ in our process.  Let $f = rz$ where $r \in V(C)$ and observe that the vertex $z$ cannot appear in the path $P$ selected by
  Lemma~\ref{getapath1} (because this path is peripheral in $H$ and no intermediate vertex on the path is adjacent to $X_i$) until $H$ is just the single isolated vertex $z$.
  It follows that $B_t \setminus \delta(r)$ is a peripheral 2-base in the graph $G - r$.
  If $z,z',z''$ are the vertices adjacent to $r$ in~$G$, then $B_t \setminus \delta(r)$ will contain an edge incident with $z'$ and one incident with $z''$ but none incident with $z$.  Since $f \in \delta(r)$ may be
  chosen arbitrarily, we have found three 2-bases in $G - r$ as desired.
\end{proof}

The above lemma gives us peripheral 2-bases with a couple of useful properties.  However, in order to  prove our main theorem about $\Z_6$-flows we require the existence of many peripheral  2-bases.  This is
achieved by the following lemma.

\begin{lemma}
\label{manydecomps}
  Let $G$ be a 3-connected cubic graph on $n$ vertices with a distinguished root $r \in V(G)$.  If every peripheral cycle of $G$ has length at most $q$, then the graph $G-r$ has at least $2^{n/(2q)}$
  decompositions into a spanning tree and a 2-base.
\end{lemma}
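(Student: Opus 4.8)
The plan is to mimic the iterative construction of a peripheral 2-base from the proof of Lemma~\ref{lem:7.2}, but to use Lemma~\ref{getapath2} in place of Lemma~\ref{getapath1} at each stage so that the construction branches in two at every step, while using Lemma~\ref{nottoolong} to bound how much of the graph each step consumes; counting the leaves of the resulting binary ``decision tree'' will give the bound. Concretely, I would first use Theorem~\ref{tutteperiphcubic} to pick a peripheral cycle $C$ of $G$ with $r\in V(C)$ and with a prescribed edge $f=rz\in\delta(r)$ not on $C$, set $B_0=E(C)$, $X_0=V(C)$, and then run the process of Lemma~\ref{lem:7.2}, maintaining its four invariants ($B_i\subseteq E(X_i)$; $B_i$ peripheral; $G[X_i]$ and $G-X_i$ connected; $\langle B_i\rangle_2\supseteq E(X_i)$) together with the extra hypothesis needed to invoke Lemma~\ref{getapath2}, namely that in the current component $H=G-X_i$ every vertex has at most one edge to $X_i$ besides $f$.

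At step $i$, Lemma~\ref{getapath2} supplies a vertex $y\in V(H)$ and two paths $P_1,P_2\subseteq H-z$ from $y$ that are peripheral in $H$, end at vertices adjacent to $X_i$, have no interior vertex adjacent to $X_i$, and use distinct edges at $y$. I would let the construction branch by setting $X_{i+1}=X_i\cup V(P_j)$ and $B_{i+1}=B_i\cup E(P_j)$ for either $j\in\{1,2\}$. Since $P_1$ and $P_2$ have exactly the same structural features as the output of Lemma~\ref{getapath1}, all four invariants survive either choice exactly as in Lemma~\ref{lem:7.2}, and $z$ is not absorbed until $H$ has shrunk to $\{z\}$, so at the end $B_t\setminus\delta(r)$ is a peripheral 2-base of $G-r$ whose complement contains a spanning tree.

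For the count I would first bound the size of a step: since $G[X_i]$ is connected and $P_j$ is peripheral in $H$ with exactly its two ends adjacent to $X_i$, Lemma~\ref{nottoolong} produces a peripheral cycle of $G$ containing $P_j$, so $P_j$ has at most $q$ edges; hence each step enlarges the 2-base by at most $q$ edges, and since the final 2-base has (roughly) $n/2$ edges, every root-to-leaf branch has at least $n/(2q)$ branching steps. To see the leaves give distinct 2-bases, suppose two branch sequences first differ at step $i$, one taking $P_1$ and the other $P_2$, and let $\beta$ be the edge of $P_2$ at $y$: then $\beta\in B$ for the second outcome, while in the first outcome $y$ enters $X_{i+1}$ and every later added path lies inside a component of some $G-X_j$ with $y\in X_j$, so $\beta$ is never added there. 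Hence the $2^{n/(2q)}$ leaves yield $2^{n/(2q)}$ distinct 2-bases of $G$, each giving a decomposition of $G-r$ into a spanning tree and a 2-base.

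The main obstacle is that Lemma~\ref{getapath2} assumes $G$ is cyclically 4-edge-connected, whereas Lemma~\ref{manydecomps} only assumes 3-connectivity, and the branching can genuinely fail exactly when a nontrivial 3-edge-cut separates $X_i$ from $H$. I see two routes. One is to prove the statement first for cyclically 4-edge-connected cubic graphs and then reduce a general 3-connected cubic $G$ by repeatedly splitting along nontrivial 3-edge-cuts, combining the decomposition counts multiplicatively while checking that the hypothesis ``every peripheral cycle has length at most $q$'' is inherited by the pieces (and tracking where $r$ lands). The other is to keep a single process: at a step where Lemma~\ref{getapath2} is unavailable, fall back to Lemma~\ref{getapath1} for one non-branching path, and argue that such steps are few enough -- or consume few enough vertices -- that at least $n/(2q)$ branching steps remain. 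Two secondary points also need care: the bookkeeping that keeps the ``at most one edge to $X_i$ besides $f$'' hypothesis alive from step to step (a vertex of $H$ can acquire a second edge to $X_{i+1}$ when a path is absorbed), and making sure the produced 2-bases give genuinely \emph{distinct} decompositions -- cleanest if each is co-tree, which may require steering which vertices get absorbed -- with the few degenerate cases (e.g.\ $q$ close to $n/2$, where $2^{n/(2q)}$ is barely above $1$) handled using the three peripheral 2-bases furnished by Lemma~\ref{lem:7.2}.
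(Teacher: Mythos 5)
Your proposal matches the paper's proof in essentially every respect: the same branching process driven by Lemma~\ref{getapath2}, the same use of Lemma~\ref{nottoolong} to bound each added path by $q$, the same distinctness argument via the edge of $P_j$ at $y$, and the first of your two routes around the cyclic-4-edge-connectivity obstacle (induction on nontrivial 3-edge-cuts, splitting $G$ into $G_1$ and $G_2$, checking the peripheral-cycle bound is inherited, and multiplying the counts, with the base case $n\le 2q$ covered by the three 2-bases from Lemma~\ref{lem:7.2}) is exactly the paper's route. The bookkeeping point you flag is resolved in the paper by absorbing any vertex $y$ with $|E(y,X_i)\setminus\{f\}|\ge 2$ into $X_{i+1}$ as a trivial step that adds no edges to the 2-base, which simultaneously keeps the hypothesis of Lemma~\ref{getapath2} alive and forces the complement $T_t=E\setminus B_t$ to be a spanning tree, so the decompositions are genuinely distinct.
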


\begin{proof}
  We proceed by induction on $n$.  As a base case, observe that when $n \le 2q$, the result follows immediately from the previous lemma.  For the inductive step we begin by considering the case that there exists a
  partition $\{X_1, X_2\}$ of $V$ with $|X_i| \ge 2$ for $i=1,2$ and $|E(X_1,X_2)| = 3$.  We may assume that $r \in X_1$ and for $i=1,2$ form a graph $G_i$ from $G$ by identifying $X_i$ to a single vertex
  called $x_i$ and deleting any loops formed in this process. It is easy to see that $G_1$ and $G_2$ are 3-connected cubic graphs. Let $e,e' \in E(X_1,X_2)$ be distinct and for $i=1,2$ let $C_i$ be a peripheral cycle of $G_i$ that contains $e,e'$.  Now the cycle of $G$ formed from
  the union of $C_1 - x_1$ and $C_2 - x_2$ by adding the edges $e,e'$ is a peripheral cycle of $G$.  It follows from this and Theorem~\ref{tutteperiphcubic} that neither $G_1$ nor $G_2$ has a peripheral cycle with
  length greater than $q$.  So, by the induction hypothesis, there are at least $2^{|V(G_1)|/(2q)}$ peripheral 2-bases of $G_1 - x_1$ and at least $2^{|V(G_2)|/(2q)}$ peripheral 2-bases of $G_2 - r$.
  The union of a peripheral 2-base of $G_1 - x_1$ with a peripheral 2-base of $G_2 - r$ is a peripheral 2-base of $G-r$ and this gives the desired count.

So we may now assume that $G$ is cyclically 4-edge-connected.  Now we will show that we have many degrees of freedom in selecting a peripheral 2-base using a procedure similar to that used in the proof of Lemma~\ref{manydecomps}.
We construct two sequences of nested subsets, edge-sets
$B_0 \subseteq B_1 \subseteq \cdots \subseteq B_t \subseteq E$ and
vertex-sets $X_0 \subseteq X_1 \subseteq \cdots \subseteq X_t = V$.
For every $0 \le i \le t$ we will maintain the same properties (i)--(iv) as in the proof of the previous lemma.
%\begin{enumerate}[label=(\roman*)]
%  \item $B_i \subseteq E(X_i)$
%  \item $B_i$ is peripheral in $G$.
%  \item The graphs $G[X_i]$ and $G - X_i$ are connected (or empty).
%  \item $\langle B_i \rangle_2$ contains every edge with both ends in $X_i$.
%\end{enumerate}

  We begin by choosing a peripheral cycle $C$ containing $r$ (note that we have three ways to do this).  Let $B_0 = E(C)$, let $X_0 = V(C)$ and we let  $\{f\} = \delta(r)  \setminus B_0$.
  Now at each step assuming $X_i \neq V$ we operate as follows:  If there is a vertex $y \in V \setminus X_i$ so that $|E(y,  X_i ) \setminus  \{f\}| \ge  2$
  then we let $B_{i+1} = B_i$ and let  $X_{i+1} = X_i \cup \{y\}$ (we have added a trivial path of one new vertex and no new edge to the 2-base).
  If no such vertex exists, then we apply Lemma~\ref{getapath2} to choose a vertex $y \in V \setminus  X_i$ and peripheral paths $P_1, P_2$.
  Now we can choose to either set $X_{i+1} = X_i \cup V(P_1)$ and $B_{i+1} = B_i \cup E(P_1)$ or we may set $X_{i+1} = X_i \cup V(P_2)$ and $B_{i+1} = B_i \cup E(P_2)$.
  We continue the process until we have $X_t = V$.

  In order to see that this operation gives us the desired flexibility, it is helpful to introduce another nested sequence of edges $T_0 \subseteq T_1 \subseteq \cdots \subseteq T_t$ defined by the rule
  $T_i = E(X_i) \setminus B_i$.  The key feature of these sets (verified by a straightforward induction) is that for every $1 \le i  \le t-1$, the set $T_i  \cup E(X_i, V \setminus X_i)$ is a  spanning tree in the
  graph obtained from $G$ by identifying $V \setminus X_i$ to a single vertex.  For every $1 \le i \le t-1$ we have $|T_{i} \setminus T_{i-1}| = 2$.  At the last step we have
  $|T_t \setminus T_{t-1}| = 3$ and the set  $T_t$ forms the edge-set of a spanning tree in $T$.  Therefore, $|V| - 1 = |T_t| =  2(t-1) + 3$ and we have $t = \frac{1}{2}|V| -1$.
  It follows from this that $|B_t| = \frac{1}{2}|V| + 1$.  It follows from Lemma~\ref{nottoolong} that every path $P$ we select using Lemma~\ref{getapath2} has length at most~$q$.
  So the total number of nontrivial paths selected in our process must be at least $\frac{|V|}{2q}$.

  It remains to show that different choices of paths during our process yield different peripheral 2-bases.
  From our construction follows immediately that  $E(X_i,  V \setminus X_i) \subseteq T_t$ for every $0 \le i \le t-1$.  Suppose that when we have
  $B_i$ and $X_i$ and apply Lemma~\ref{getapath2} we select the vertex $y \in V \setminus X_i$ and  the paths $P_1$, $P_2$ (both ending at $y$).
  Let $\{e_0\} = E(y,X_i)$ and for $j=1,2$ let $e_j$ be the edge of $P_j$ incident with $y$ (the lemma gives $e_1 \neq e_2$).
  If we choose the path $P_j$ and set $B_{i+1} = B_i \cup E(P_j)$ and $X_{i+1} = X_i \cup V(P_j)$ then upon completion of our process we will
  have $\delta(y) \cap B_t = \{e_j\}$.  So the 2-bases constructed by making a different choice of $P_1$ or $P_2$ will always be distinct.
  This gives us at least $2^{n/(2q)}$ peripheral 2-bases of $G$, as desired.
\end{proof}

%%%%%%%%%%%%%%%%%%%%%%%%%%%%%%%%%%%%%%%%%%%%%%%%%%%%%%%%%%%%%%%%%%%%%%%%%%%%%%%%%%%%%%%%%%
\section{Flows in $\Z_6$}
\label{sec:8}

In this section we will first prove a lemma that provides the existence of many peripheral 2-bases in a 3-edge-connected cubic graph with a long peripheral cycle.  We will then use this to prove our main
theorem showing the existence of many $\Z_6$-flows in the setting of group connectivity for 3-edge-connected graphs.

\begin{lemma}
  Let $G = (V,E)$ be an oriented 3-edge-connected cubic graph with a peripheral cycle $C$ with $|V(C)| = q$.  For every $f : E \rightarrow \Z_2 \times \Z_3$ there exist at least $2^{2q/3}$ flows
  $\phi: E \rightarrow \Z_2 \times \Z_3$ with $\phi(e) \neq f(e)$ for every $e \in E$.
\end{lemma}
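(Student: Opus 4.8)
The plan is to lean on the $1$-base/$2$-base technology together with the counting idea from the proof of Theorem~\ref{order8plus}, using the long peripheral cycle $C$ as the source of flexibility. Write $f=(f_1,f_2)$ with $f_1\colon E\to\Z_2$ and $f_2\colon E\to\Z_3$. First I would apply Lemma~\ref{lem:7.2}(1) to $C$ to obtain a peripheral $2$-base $B\subseteq E$ with $E(C)\subseteq B$; since $B$ is peripheral, $G-B$ is connected, so $T:=E\setminus B$ is a $1$-base. Applying Lemma~\ref{base2groupflow} to the $1$-base $T$ with $k=1$ (hence in the group $\Z_2$) and the map $f_1|_{B}$ produces a $\Z_2$-flow $\phi_1$ with $\phi_1(e)\neq f_1(e)$ for every $e\in B$; fix one such $\phi_1$. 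It then suffices to produce at least $2^{2q/3}$ $\Z_3$-flows $\phi_2$ with $\phi_2(e)\neq f_2(e)$ for every $e\in T$, because then $\phi=(\phi_1,\phi_2)$ satisfies $\phi(e)\neq f(e)$ on all of $E=B\sqcup T$, and distinct $\phi_2$ give distinct $\phi$.

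To count such $\phi_2$ I would mimic the proof of Theorem~\ref{order8plus}: invoke the definition of a $2$-base to pick a nested chain $B=B_0\subset B_1\subset\dots\subset B_t=E$ with $|B_i\setminus B_{i-1}|\le 2$ and, for each $i$, a cycle $C_i$ with $B_i\setminus B_{i-1}\subseteq E(C_i)\subseteq B_i$; then build $\phi_2=\sum_{i} x_i\nu_i$ backwards, where $\nu_i$ is the zero flow or an elementary $\Z_3$-flow on $E(C_i)$ and $x_i\in\Z_3$ is chosen, after $\nu_{i+1},\dots,\nu_t$ are set, so that $\phi_2(e)\neq f_2(e)$ for all $e\in B_i\setminus B_{i-1}$. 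Exactly as there, the nesting forces distinct choice-sequences to give distinct $\phi_2$. A step with $|B_i\setminus B_{i-1}|=2$ may pin $x_i$, but a \emph{single-edge} step ($|B_i\setminus B_{i-1}|=1$) leaves at least $3-1=2$ admissible values of $x_i$, since the elementary flow is nonzero on the unique edge being fixed. Hence the number of $\phi_2$ produced is at least $2^{s}$, where $s$ is the number of single-edge steps in the chain.

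The heart of the matter is therefore to arrange the chain so that $s\ge \tfrac{2q}{3}$, and this is exactly where the hypothesis that $C$ is a peripheral cycle of length $q$ enters. Because $E(C)\subseteq B_0$ and $C$ is a cycle, the $q$ vertices of $C$ are already mutually connected inside $B_0$, so --- processing the graph ``outward from $C$'', and using Lemma~\ref{nottoolong} to keep the cycles $C_i$ short --- many edges incident with, or near, $V(C)$ can be attached one at a time. I would set this up so that each of the $\lfloor q/3\rfloor$ blocks of three consecutive edges of $C$ is charged for at least two single-edge steps, which yields $s\ge\tfrac{2q}{3}$. I expect this bookkeeping to be the main obstacle: single-edge steps only become available once enough of $T$ has been reattached, so the order of the extension must be chosen carefully, and the nested structure must be kept intact so that the distinctness argument still applies. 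A likely further subtlety is that simply fixing $\phi_1$ as above can be too wasteful when $q$ is close to $\tfrac12|V|$ (there $B$ is forced to be essentially $E(C)$ and almost no single-edge steps are available at the outset), so the honest argument may instead have to play the $\Z_2$- and $\Z_3$-freedoms off against each other --- e.g.\ only insisting that $\phi_1(e)\neq f_1(e)$ on the (few) edges $e\in B$ with $\phi_2(e)=f_2(e)$, and averaging --- rather than fixing $\phi_1$ outright. Equivalently, one could organise everything as producing many distinct peripheral $2$-bases $B$ with $E(C)\subseteq B$ (in the style of Lemma~\ref{manydecomps}) and checking that the flows they yield via the Jaeger--Linial--Payan--Tarsi construction are pairwise distinct.
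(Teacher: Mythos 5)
Your setup (the peripheral $2$-base $B\supseteq E(C)$ from Lemma~\ref{lem:7.2}(1), with $T=E\setminus B$ a spanning tree) matches the paper, but your main counting mechanism has a genuine gap. You fix $\phi_1$ once and try to extract $2^{2q/3}$ choices for $\phi_2$ from single-edge steps in a $2$-closure chain $B=B_0\subset\cdots\subset B_t=E$. There is no reason the number $s$ of single-edge steps is at least $\tfrac{2q}{3}$ --- the length of $C$ controls how large $B_0$ is, not how the remaining $n-1$ tree edges get swallowed by the closure --- and your ``charge each block of three consecutive edges of $C$ for two single-edge steps'' is asserted, not proved. It is in fact false in general: for $K_4$ with $C$ a triangle ($q=3$), $B=E(C)$ and $T$ is the star at the fourth vertex; the closure adds two star edges in one step and the last in a second step, so $s=1$ and your method guarantees only $2$ flows, while the lemma demands $2^{2q/3}=4$. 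You yourself flag this regime ($B$ essentially equal to $E(C)$) as problematic, which is a sign the primary argument does not close.

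The idea you relegate to a speculative aside at the end is the actual proof. The paper fixes one $\phi_2$ with $\phi_2(e)\neq f_2(e)$ on $T$ (Lemma~\ref{base2groupflow}), then adds a suitable multiple of the elementary $\Z_3$-flow \emph{around $C$ itself} so that the set $A=\{e\in E(C):\phi_2(e)\neq f_2(e)\}$ has $|A|\ge\tfrac{2}{3}q$ (each edge of $C$ is bad for at most one of the three multiples). The exponential count then comes from the $\Z_2$ side: with $B'=\{e\in B\setminus A: f_1(e)=0\}$, every $S$ with $B'\subseteq S\subseteq B'\cup A$ yields a $\Z_2$-flow supported on $\bigoplus_{e\in S}C_e$ (fundamental cycles with respect to $T$), and the pair $(\phi_1,\phi_2)$ avoids $f$ everywhere because on $A$ the $\Z_3$ coordinate already separates, so $\phi_1$ is free there. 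That gives $2^{|A|}\ge 2^{2q/3}$ distinct flows. So the roles are the reverse of your proposal: the factor $2^{2q/3}$ lives in the $\Z_2$ coordinate on the edges of $C$, enabled by first spending the elementary flow on $C$ in the $\Z_3$ coordinate; fixing $\phi_1$ outright on all of $B$ discards exactly the freedom the lemma needs.
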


\begin{proof}
  Put $f = (f_1, f_2)$.  Now choose a partition of $E$ into $\{B,T\}$ so that $B$ is a 2-base with $E(C) \subseteq B$ and $T$ is the edge-set of a spanning tree.
  Apply Lemma~\ref{base2groupflow} to choose a flow $\phi_2 : E \rightarrow \Z_3$ satisfying $\phi_2(e) \neq f_2(e)$ for every $e \in T$.
  By possibly modifying $\phi_2$ by adding an elementary flow around $C$, we may further assume that $A = \{ e \in E(C) \mid \phi_2(e) \neq  f_2(e)\}$ satisfies $|A| \ge \frac{2}{3}|E(C)| \ge \frac{2}{3}q$.
  Let $B' = \{ e \in B \setminus A \mid f_1(e) = 0 \}$, and for every $ e\in B$, let $C_e$ be the edge-set of the fundamental cycle of $e$ with respect to the spanning tree $(V,T)$.
  Now for every set $S$ with $B' \subseteq S \subseteq B' \cup A$, there is a $\Z_2$-flow $\phi_1$ with support $\bigoplus_{e \in S} C_e$ and the $\Z_2 \times \Z_3$-flow $\phi = (\phi_1, \phi_2)$ satisfies $\phi(e) \neq f(e)$ for every $e \in E$.
  There are $2^{|A|}$ choices for $S$, each of which gives a different flow.
  Since $|A| \ge \frac{2}{3}q$, this gives the desired bound.
\end{proof}

With this last lemma in place we are ready to prove Theorem~\ref{order6}, our main theorem concerning flows in $\Z_6$. We have restated it for convenience.

\begin{theorem*}
  Let $G=(V,E)$ be an oriented 3-edge-connected graph with $\ell = |E|-|V| \ge 11$, and let $f : E \rightarrow \Z_2 \times \Z_3$.
  There exist at least $2^{ \sqrt{\ell} / \log \ell}$ flows $\phi : E \rightarrow \Z_2 \times \Z_3$ with the property that $\phi(e) \neq f(e)$ for every $e \in E$.
\end{theorem*}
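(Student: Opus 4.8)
The plan is to pass to a cubic graph and then split into two cases according to how long a peripheral cycle it has. By Lemma~\ref{reduce2cubic} fix a $3$-edge-connected cubic graph $G'$ on $n := 2\ell$ vertices so that $G$ is obtained from $G'$ by contracting a forest $F$; since $n \ge 22 > 2$, the graph $G'$ has no parallel edges and is in fact $3$-connected. Extend $f$ to $f' : E(G') \to \Z_2 \times \Z_3$ arbitrarily (say $f'\equiv 0$ on $F$); by Observation~\ref{forestdiff} the restriction map from flows of $G'$ to flows of $G$ is injective and carries flows avoiding $f'$ to flows avoiding $f$, so it suffices to exhibit $2^{\sqrt\ell/\log\ell}$ flows $\phi'$ of $G'$ with $\phi'\ne f'$ everywhere. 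Let $q^\star$ be the maximum length of a peripheral cycle of $G'$, and fix a threshold $q_0$ of order $\sqrt\ell$ (a small constant factor above $\sqrt\ell$ will be convenient; this is ultimately responsible for the $\log\ell$).

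If $q^\star \ge q_0$, apply the lemma preceding Theorem~\ref{order6} to a peripheral cycle of $G'$ of length $q^\star$: it produces at least $2^{2q^\star/3} \ge 2^{2q_0/3}$ flows $\phi' : E(G') \to \Z_2\times\Z_3$ with $\phi'\ne f'$ on every edge. Since $2q_0/3 \ge \sqrt\ell/\log\ell$ whenever $\log\ell \ge \tfrac32$, and in particular for $\ell \ge 11$, this case is complete.

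Otherwise every peripheral cycle of $G'$ has length at most $q_0$. Pick a root $r \in V(G')$ and apply Lemma~\ref{manydecomps} with $q = q_0$ to obtain at least $2^{n/(2q_0)} = 2^{\ell/q_0}$ decompositions of $G'-r$ into a spanning tree and a $2$-base. Fixing one edge $e_0 \in \delta(r)$, a decomposition $(T',B')$ of $G'-r$ extends to a decomposition of $G'$ by taking $T^+ = T'\cup\{e_0\}$ (a spanning tree of $G'$) and $B^+ = B'\cup(\delta(r)\setminus\{e_0\})$ (still a $2$-base, since $\langle B'\rangle_2$ already contains $E(G'-r)$ and a short cycle through $e_0$ then pulls $e_0$ into the closure), and distinct decompositions of $G'-r$ extend to distinct decompositions of $G'$. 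Writing $f'=(f_1,f_2)$ with $f_i$ valued in $\Z_i$, Lemma~\ref{base2groupflow} (with $k=2$, $q=3$) supplies a $\Z_3$-flow $\phi_2$ with $\phi_2(e)\ne f_2(e)$ for every $e \in E(G')\setminus B^+ = T^+$, while Lemma~\ref{base2groupflow} (with $k=1$, $q=2$, using that the spanning tree $T^+$ is a $1$-base) supplies the unique $\Z_2$-flow $\phi_1$ with $\phi_1(e)\ne f_1(e)$ for every $e \in E(G')\setminus T^+ = B^+$. Then $\phi':=(\phi_1,\phi_2)$ differs from $f'$ on every edge, so each of the $2^{\ell/q_0}$ decompositions yields such a flow.

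The remaining step — which I expect to be the main obstacle — is to guarantee that these flows are essentially pairwise distinct, i.e.\ that only boundedly many decompositions on the list can collapse to the same flow. Since $\phi_1$ is \emph{forced} once the decomposition is fixed, this is really a question about how much the forced $\Z_2$-flow remembers its cotree $B^+$. I would exploit the explicit structure of the list delivered by Lemma~\ref{manydecomps}: those $2^{\ell/q_0}$ decompositions are pairwise separated by $\ge \ell/q_0$ essentially independent binary choices, one at each \emph{branching vertex} $y$ (the two candidate path-edges at $y$ produced by Lemma~\ref{getapath2}, exactly one of which enters the $2$-base, the third edge at $y$ always being a tree edge). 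Combining $\Z_2$ flow conservation at $y$ with a deliberate choice of which coordinate witnesses $\phi'\ne f'$ on the two path-edges at $y$, one aims to read each branching choice off $\phi'$; making this precise appears to require interleaving the construction of $\phi'$ with the ear-by-ear build of $B^+$ inside the proof of Lemma~\ref{manydecomps}, rather than invoking it as a black box, and the resulting bookkeeping loss is what forces $q_0$ a logarithmic factor above $\sqrt\ell$, giving the exponent $\sqrt\ell/\log\ell$ rather than a clean $\Omega(\sqrt\ell)$. Finally one fixes $q_0$ so as to balance the two cases, checking that both $2q_0/3$ and $\ell/q_0$ (less the loss) exceed $\sqrt\ell/\log\ell$ for all $\ell\ge 11$.
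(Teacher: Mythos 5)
Your overall architecture matches the paper's: reduce to a cubic graph $G'$ on $2\ell$ vertices, then split on whether $G'$ has a long peripheral cycle, handling the long-cycle case with the lemma preceding the theorem and the short-cycle case with the many decompositions from Lemma~\ref{manydecomps}. But the step you flag as ``the main obstacle'' --- showing that only boundedly many decompositions collapse to the same flow --- is exactly the heart of the proof, and your sketch for it does not work as set up. First, your threshold is pointed the wrong way: with $q_0 = \Theta(\sqrt\ell)$ the short-cycle case yields only $2^{\ell/q_0} = 2^{O(\sqrt\ell)}$ decompositions, so you cannot afford to lose a factor of $2^{\varepsilon\sqrt\ell}$ to collisions; you would need the decomposition-to-flow map to be essentially injective. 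Second, the $\Z_2$ coordinate cannot certify injectivity: the forced $\Z_2$-flow is $f_1+1$ on $B^+$ and whatever conservation dictates on $T^+$, and two different $2$-bases $B, \tilde B$ can easily produce the same $\Z_2$-flow (any flow equal to $f_1+1$ on $B\cup\tilde B$ does the job), so ``reading the branching choices off $\phi'$'' has no reason to succeed without substantial extra structure that you have not supplied.

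The paper's resolution is different in both respects. It sets the threshold a logarithmic factor \emph{below} $\sqrt\ell$, namely $q = \tfrac32\sqrt\ell/\log\ell$, so that the long-cycle case still gives $2^{2q/3} = 2^{\sqrt\ell/\log\ell}$ flows while the short-cycle case gives the much larger number $N = 2^{\frac23\sqrt\ell\log\ell}$ of decompositions --- enough slack to absorb a $2^{\Theta(\sqrt\ell)}$ collision loss. Distinctness is then extracted from the $\Z_3$ coordinate, not the $\Z_2$ one, via a dichotomy: if some $\phi_i$ disagrees with $f_2$ on at least $\sqrt\ell/\log\ell$ edges of its $2$-base $B_i$, one wins outright by toggling $\Z_2$-flows supported on symmetric differences of fundamental cycles over those edges; otherwise every $\phi_i$ agrees with $f_2$ precisely on $B_i$ minus at most $r \le \sqrt\ell/\log\ell$ edges, so the agreement set of $\phi_i$ with $f_2$ determines $B_i$ up to a choice counted by $\binom{2\ell-1+r}{r} \le (3e\sqrt\ell\log\ell)^{\sqrt\ell/\log\ell}$, and dividing $N$ by this bound leaves $2^{\sqrt\ell/\log\ell}$ distinct $\phi_i$. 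To complete your argument you would either need to prove the injectivity statement you only conjecture, or replace it with a counting argument of this kind --- and in the latter case you must also move your threshold below $\sqrt\ell$, not above it.
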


\begin{proof}
  Apply Lemma~\ref{reduce2cubic} to choose a cubic graph $G' = (V',E')$ with $|V'| = 2 \ell$ so that $G$ can be obtained from $G'$ by contracting the edges of a forest.  Extend the function $f$ to have domain $E'$
  arbitrarily, and let $f = (f_1, f_2)$ where $f_1 : E' \rightarrow \Z_2$ and $f_2 : E' \rightarrow \Z_3$.  Observe that by Theorem~\ref{forestdiff} it suffices to prove the result for $G'$ in place of $G$.

  If $G'$ has a peripheral cycle of length at least $q=\frac{3}{2} \sqrt{\ell} / \log \ell$, then the result follows immediately from the previous lemma.  Otherwise, by Lemma~\ref{manydecomps}, we can choose
  $N = \bigl\lceil 2^{2\ell/(2q)} \bigr\rceil = \bigl\lceil 2^{ \frac{2}{3} \sqrt{\ell} \log \ell} \bigr\rceil$ pairwise distinct partitions of~$E'$, say $\{T_1, B_1\}, \ldots, \{T_{N}, B_{N}\}$ where for $1 \le i \le N$ the set $T_i$ is the edge-set of a spanning
  tree and $B_i$ is a 2-base. (Note that $|B_i| = \ell+1$.)
  For each such partition $\{T_i,B_i\}$, $1 \le i \le N$, we apply Lemma~\ref{base2groupflow} to choose a $\Z_3$-flow $\phi_i : E' \rightarrow \Z_3$ satisfying
  $\phi_i(e) \neq f_2(e)$ for every $ e \in E' \setminus B_i$.

  First suppose there exists $1 \le i \le N$ for which the flow $\phi_i$ has the property that $\phi_i(e) \neq f_2(e)$ holds for at least $\sqrt{\ell} / \log \ell$ edges $e \in B_i$.  In this case, we may proceed
  as in the proof of the previous lemma to construct $2^{\sqrt{\ell} / \log \ell }$ flows:  Set $A = \{e \in  B_i \mid \phi_i(e) \neq f_2(e) \}$, set $B' = \{  e \in B_i \setminus A \mid f_1(e) = 0 \}$, and then
  for every $B' \subseteq S \subseteq B' \cup A$ form a $\Z_2$-flow with support $\bigoplus_{e \in S} C_e$ (where $C_e$ is the edge-set of the fundamental cycle of $e$ with respect to the tree $T_i$) and combine
  this with $\phi_i$ to get at least $2^{\sqrt{\ell} / \log \ell }$ valid flows.

  Thus we may assume that every $\phi_i$ has at most $\sqrt{\ell} / \log \ell$ edges $e \in B_i$ for which $\phi_i(e) \neq f_2(e)$.  This means that each flow $\phi_i$ will agree with the function $f_2$ on all but
  at most $\sqrt{\ell} / \log \ell $ elements of $B_i$ and on no elements in $T_i$.  In particular, we have
\begin{equation}
\label{eq:bound}
  \ell + 1 - \sqrt{\ell} / \log \ell  \le | \{ e \in E' \mid \phi_i(e) = f_2(e) \}| \le \ell+1.
\end{equation}
Now let $\nu : E' \rightarrow \Z_3$ be a flow, let $A = \{e \in E' \mid \nu(e) = f_2(e) \}$ and let $r := \ell + 1 - |A|$.  We will find an upper bound on the number of indices $1 \le i \le N$
for which $\phi_i = \nu$.  If $r < 0$ or $r > \sqrt{\ell} / \log \ell$, then (\ref{eq:bound}) shows that $\nu \neq \phi_i$ for every $1 \le i \le N$.  Otherwise, in order for $\nu = \phi_i$ it must be that the 2-base $B_i$ consists of all
of the edges in $A$ plus $r$ edges from $E' \setminus A$.  The number of ways to select such a set is equal to $\binom{|E\setminus A|}{r}$, which we further estimate using the bound ${ n \choose k } \le \left(\frac{en}{k} \right)^k$ and the fact that the function $k \mapsto ( \frac{en}{k} )^k$ is increasing for $k < n$:
$$
   \binom{|E\setminus A|}{r} = { 2\ell-1 + r \choose r}
       \le \left( \frac{ 3e\ell }{r} \right)^{r}
       \le \left( \frac{ 3e\ell }{ \sqrt{\ell} / \log \ell } \right)^{ \sqrt{\ell} / \log \ell}
        =  \left(3 e \sqrt{ \ell } \log \ell \right)^{ \sqrt{\ell}/\log \ell }.
$$
It follows that the number of distinct flows in our list $\phi_1, \ldots, \phi_N$ is at least
\[
   \frac{2^{ \frac{2}{3} \sqrt{\ell} \log \ell } }{ 2^{ \left( \sqrt{ \ell} / \log \ell \right) \log\left(3e \sqrt{\ell} \log \ell \right) } }
      	= 2^{( \sqrt{\ell} / \log \ell) \left( \frac{2}{3}(\log \ell)^2 - \log(3 e \sqrt{ \ell} \log \ell) \right)} .
\]
Since $\ell \ge 11$, we have $\frac{2}{3}(\log \ell)^2 -\log(3 e \sqrt{\ell} \log \ell) \ge 1$, and our list $\phi_1, \phi_2, \ldots, \phi_N$ contains at least $2^{ \sqrt{\ell} / \log \ell}$ distinct flows.
For every $1 \le i \le N$ we may apply Lemma~\ref{base2groupflow} to choose a flow $\psi_i : E' \rightarrow \Z_2$ so that $\psi_i(e) \neq f_1(e)$ holds for every $e \in B_i$.
So, every $(\psi_i, \phi_i)$ is a $\Z_2 \times \Z_3$ flow for which $(\psi_i(e), \phi_i(e)) \neq (f_1(e), f_2(e))$ holds for  every $e \in E'$ and we have at least $2^{ \sqrt{\ell} / \log \ell}$ such flows, thus
completing our proof.
\end{proof}

\section{Open problems} 

As our main question, we would like to know what is that status of Conjecture~\ref{conjexp} for $\Z_6$ and $\Z_7$. 
However, we also want to list here some further questions that came up during our work on this paper. 

We conjecture that a 3-edge-connected, nonplanar graph with representativity at least 5 has exponentially many peripheral cycles.
Note that for 3-edge-connected planar graphs, peripheral cycles are exactly facial walks, so there is at most $2n-4$ of them.

As mentioned before, a result of Jaeger et al.~\cite{JLPT} (Theorem~\ref{thmJLPT}) gives a decomposition 
of a graph obtained from a 3-connected cubic graph by deleting a single vertex into a 1-base and a 2-base. 
We conjecture that such graph can also be decomposed into three 2-bases. 

\bibliographystyle{acm}
\bibliography{many_flows}

\end{document}